\theoremstyle{plain}
\newtheorem{thm}{\protect\theoremname}
  \theoremstyle{definition}
  \newtheorem{defn}[thm]{\protect\definitionname}
  \theoremstyle{definition}
  \newtheorem{example}[thm]{\protect\examplename}
  \theoremstyle{remark}
  \newtheorem{rem}[thm]{\protect\remarkname}
  \theoremstyle{plain}
  \newtheorem{cor}[thm]{\protect\corollaryname}
  \theoremstyle{plain}
  \newtheorem{prop}[thm]{\protect\propositionname}
  \theoremstyle{plain}
  \newtheorem{lem}[thm]{\protect\lemmaname}
  \providecommand{\corollaryname}{Corollary}
  \providecommand{\definitionname}{Definition}
  \providecommand{\examplename}{Example}
  \providecommand{\lemmaname}{Lemma}
  \providecommand{\propositionname}{Proposition}
  \providecommand{\remarkname}{Remark}
\providecommand{\theoremname}{Theorem}
\begin{document}
\selectlanguage{american}%
\global\long\def\R{\mathbb{R}}

\global\long\def\N{\mathbb{N}}

\global\long\def\C{\mathbb{C}}

\global\long\def\Cl{C}

\global\long\def\tr{\mathrm{tr}}

\global\long\def\Se{\mathcal{S}}

\global\long\def\P{L_{\beta_{1},\beta_{2}}^{\infty}(\Omega)}

\global\long\def\V{H_{0}^{1}(\Omega;\C)}

\global\long\def\Vp{H^{-1}(\Omega;\C)}

\global\long\def\Vr{H_{0}^{1}(\Omega)}

\global\long\def\Honer{H^{1}(\Omega)}

\global\long\def\Hone{H^{1}(\Omega;\C)}

\global\long\def\Hhalf{H^{1/2}(\Omega;\C)}

\global\long\def\Hhalfr{H^{1/2}(\Omega)}

\global\long\def\phi{\varphi}

\global\long\def\epsilon{\varepsilon}

\global\long\def\div{{\rm div}}

\global\long\def\ld{L^{2}(\Omega;\C^{3})}

\global\long\def\ldr{L^{2}(\Omega;\R^{3})}

\global\long\def\linf{L^{\infty}(\Omega;\C)}

\global\long\def\linfr{L^{\infty}(\Omega)}

\global\long\def\Conealp{\mathcal{C}^{1,\alpha}(\overline{\Omega};\C)}

\global\long\def\Conealpr{\mathcal{C}^{1,\alpha}(\overline{\Omega})}

\global\long\def\Calpr{\mathcal{C}^{0,\alpha}(\overline{\Omega};\R^{d\times d})}

\global\long\def\Calprsca{\mathcal{C}^{0,\alpha}(\overline{\Omega})}

\global\long\def\Czeroonescal{\mathcal{C}^{0,1}(\overline{\Omega})}

\global\long\def\Calpvect{\mathcal{C}^{0,\alpha}(\overline{\Omega};\R^{d})}

\global\long\def\Cone{\Cl^{1}(\overline{\Omega};\C)}

\global\long\def\Coner{\Cl^{1}(\overline{\Omega})}

\global\long\def\Kad{K_{ad}}

\global\long\def\mina{\lambda}

\global\long\def\maxa{\Lambda}

\selectlanguage{english}%
\global\long\def\Hcurl{H(\curl,\Omega)}

\global\long\def\Hmu{H^{\mu}(\curl,\Omega)}

\global\long\def\Hocurl{H_{0}(\curl,\Omega)}

\global\long\def\Hdiv{H(\div,\Omega)}

\global\long\def\k{\omega}

\global\long\def\div{{\rm div}}

\global\long\def\curl{{\rm curl}}

\global\long\def\supp{{\rm supp}}

\global\long\def\sp{{\rm span}}

\global\long\def\ii{\mathbf{i}}

\global\long\def\E{E_{\k}}

\global\long\def\Ei{E_{\k}^{i}}

\global\long\def\Hi{H_{\k}^{i}}

\global\long\def\H{H_{\k}}

\global\long\def\EE{\tilde{E}_{\k}}

\global\long\def\bo{\partial\Omega}

\global\long\def\Co{\Cl(\overline{\Omega};\C^{3})}

\global\long\def\ve{\theta}

\global\long\def\so{\hat{\sigma}}

\global\long\def\order{\kappa}

\global\long\def\p{s}

\global\long\def\pert{t}

\global\long\def\e{\mathbf{e}}

\title[On multiple frequency power density measurements II]{On multiple frequency power density measurements II.\\
The full Maxwell's equations}

\author{Giovanni S. Alberti}

\address{Mathematical Institute, University of Oxford, Andrew Wiles Building,
Radcliffe Observatory Quarter, Woodstock Road, Oxford, OX2 6GG, United
Kingdom.}

\email{giovanni.alberti@maths.ox.ac.uk}

\date{January 5, 2015}
\begin{abstract}
We shall give conditions on the illuminations $\phi_{i}$ such that
the solutions to Maxwell's equations
\[
\left\{ \begin{array}{l}
\curl E^{i}=\ii\k\mu H^{i}\qquad\text{in \ensuremath{\Omega},}\\
\curl H^{i}=-\ii(\k\epsilon+\ii\sigma)E^{i}\qquad\text{in \ensuremath{\Omega},}\\
E^{i}\times\nu=\phi_{i}\times\nu\qquad\text{on \ensuremath{\partial\Omega},}
\end{array}\right.
\]
satisfy certain non-zero qualitative properties inside the domain
$\Omega$, provided that a finite number of frequencies $\k$ are
chosen in a fixed range. The illuminations are explicitly constructed.
This theory finds applications in several hybrid imaging problems,
where unknown parameters have to be imaged from internal measurements.
Some of these examples are discussed. This paper naturally extends
a previous work of the author {[}Inverse Problems 29 (2013) 115007{]},
where the Helmholtz equation was studied.
\end{abstract}

\keywords{hybrid imaging, sets of measurements, internal measurements, multiple
frequencies, Maxwell's equations, qualitative properties}

\subjclass[2010]{35R30, 35Q61, 35B30}

\maketitle

\section{Introduction}

Let $\Omega\subseteq\R^{3}$ be a smooth bounded domain and consider
the Dirichlet boundary value problem for Maxwell's equations
\begin{equation}
\left\{ \begin{array}{l}
\curl E_{\k}^{\phi}=\ii\k\mu H_{\k}^{\phi}\qquad\text{in \ensuremath{\Omega},}\\
\curl H_{\k}^{\phi}=-\ii(\k\epsilon+\ii\sigma)E_{\k}^{\phi}\qquad\text{in \ensuremath{\Omega},}\\
E_{\k}^{\phi}\times\nu=\phi\times\nu\qquad\text{on \ensuremath{\partial\Omega},}
\end{array}\right.\label{eq:main maxwell}
\end{equation}
where $\mu$, $\epsilon$ and $\sigma$ are uniformly positive definite
real tensors representing the magnetic permeability, the electric
permittivity and the conductivity, respectively, $\phi$ is a given
illumination and $\k>0$. It is well known that the above problem
is well-posed and admits a unique solution $(E_{\k}^{\phi},H_{\k}^{\phi})$ \cite{MONK-2003}.

We want to find suitable illuminations $\phi_{i}$ such that the corresponding
solutions to \eqref{eq:main maxwell} satisfy certain non-zero conditions
in $\Omega$. For example, we may look for three illuminations $\phi_{1}$,
$\phi_{2}$ and $\phi_{3}$ such that
\begin{equation}
\bigl|\det\begin{bmatrix}E_{\k}^{\phi_{1}} & E_{\k}^{\phi_{2}} & E_{\k}^{\phi_{3}}\end{bmatrix}(x)\bigr|>0,\label{eq:det intro}
\end{equation}
or, more generally, for $b$ illuminations $\phi_{1},\dots,\phi_{b}$
such that the corresponding solutions verify $r$ conditions given
by 
\begin{equation}
\left|\zeta_{l}\bigl((E_{\k}^{\phi_{1}},H_{\k}^{\phi_{1}}),\dots,(E_{\k}^{\phi_{b}},H_{\k}^{\phi_{b}})\bigr)(x)\right|>0,\qquad l=1,\dots,r,\label{eq:zeta intro}
\end{equation}
where the maps $\zeta_{l}$ depend on $(E_\omega^{\varphi_i},H_\omega^{\varphi_i})$ and their derivatives.

This problem is generally studied for a fixed frequency $\k>0$. A
possible approach to find suitable illuminations uses complex geometric
optics (CGO) solutions for Maxwell's equations. They were introduced
by Colton and Päivärinta \cite{colton-paivarinta-1992}, and generalize
the CGO solutions originally introduced for the Calderón problem \cite{kavian-2003}
by Sylvester and Uhlmann \cite{sylv1987}. These are highly oscillatory
solutions of \eqref{eq:main maxwell} and can be chosen to satisfy
\eqref{eq:zeta intro} (see \cite{chen-yang-2013}). Unfortunately,
CGO solutions have some drawbacks. First, their construction depends
on the knowledge of the parameters $\mu$, $\epsilon$ and $\sigma$,
which in inverse problems are usually unknown. Second, they may be
numerically unstable to implement due to their high oscillations.
Third, the parameters have to be very smooth. In the elliptic case,
another construction method for the illuminations uses the Runge approximation,
which ensures that locally the solutions of the general problem behave
like the solutions to the constant coefficient case. This approach
was first introduced by Bal and Uhlmann in the case of elliptic equations
\cite{bal2011reconstruction}. As in the case of CGO solutions, the
suitable illuminations are not explicitly constructed and may depend
on the unknown parameters $\mu$, $\epsilon$ and $\sigma$.

In this paper we propose an alternative strategy to this issue based
on the use of multiple frequencies in a fixed range $\Kad=[K_{\text{min}},K_{\text{max}}]$,
for some $0<K_{\text{min}}<K_{\text{max}}$. Namely, given the maps
$\zeta_{l}$, we shall give conditions on the illuminations such that
the corresponding solutions satisfy the required properties, provided
that a finite number of frequencies are used in the range $\Kad$.
These conditions may depend on some (or none) of the parameters. More
precisely, there exist illuminations $\phi_{i}$ and a finite number
of frequencies $K\subseteq\Kad$ such that \eqref{eq:zeta intro}
is satisfied for every $x\in\Omega$ and for some $\k\in K$ depending
on $x$. For example, we shall show that the choice $\{\e_{1},\e_{2},\e_{3}\}$
is sufficient for condition \eqref{eq:det intro}, provided that $\sigma$
is close to a constant. The main idea behind this method is simple:
if the illuminations are suitably chosen then the zero level sets
of functionals depending on $E_{\k}^{\phi}$ and $H_{\k}^{\phi}$
\emph{move} when the frequency changes. The proof is based on the
regularity theory for Maxwell's equations, on the analyticity of the
map $\k\mapsto(E_{\k}^{\phi},H_{\k}^{\phi})$ for complex frequencies
$\k$ and on the fact that the required conditions are satisfied in
$\k=0$.

This work is the natural completion of a previous paper \cite{alberti2013multiple},
where the same problem was studied for the Helmholtz equation. The
generalization to Maxwell's equations is not straightforward. Regularity
theory for Maxwell's equations is less developed than elliptic regularity
theory, and a recent work \cite{ALBERTI-CAPDEBOSCQ-2013} has been
used here to have minimal regularity assumptions on the coefficients.
Well-posedness for \eqref{eq:main maxwell} with $\k\in\C\setminus\Sigma$
follows by a standard argument, but is not as classical as with the
Helmholtz equation, since we consider a complex frequency-dependent
refractive index $\epsilon+\ii\k^{-1}\sigma$.

This theory finds applications in the reconstruction procedures of
several hybrid problems. In hybrid imaging techniques different types
of waves are combined simultaneously to obtain high-resolution and
high-contrast images (see \cite{ammari2011expansion,kuchment2011_review,bal2012_review,alberti-capdeboscq-2014}).
In a first step, internal functionals, or \emph{power densities},
are measured inside the domain. In a second step, the parameters have
to be reconstructed from the knowledge of these power densities. In
most situations, the problem is modeled by the Helmholtz equation
\cite{triki2010,cap2011,bal2011quantitative,ammari2012quantitative,bal2011reconstruction,bal2010inverse,ammari2008elasticity,alessandrini2014global}
or by the full Maxwell's equations \cite{seo-kim-etal-2012,bal2013reconstruction,bal2013hybrid,chen-yang-2013},
and the second step usually requires the availability of solutions
satisfying certain qualitative properties inside the domain, such
as \eqref{eq:det intro} or, more generally, \eqref{eq:zeta intro},
for some maps $\zeta_{l}$ depending on the particular problem under
consideration. These conditions have been shown to be satisfied by
suitable CGO solutions or by means of the Runge approximation in the
elliptic case. The multi-frequency approach described in this work
is an alternative. Since $K$ is finite, the dependence of the frequency
on $x$ does not constitute a source of instability in the reconstruction
(see Section~\ref{sec:Applications-to-Hybrid}).

In \cite{alberti2013multiple} the microwave imaging by ultrasound
deformation technique modeled by the Helmholtz equation was studied as
an application. In this paper we provide three different examples
of applications. Two are related to the problem of reconstructing
the electromagnetic parameters from the knowledge of the magnetic
field inside the domain. The third one deals with an inverse problem
for electro-seismic conversion. In the literature regarding hybrid
inverse problems, the Helmholtz equation approximation, rather than
the full Maxwell's equations, has often been considered. It is likely
that as this domain develops, other reconstruction algorithms requiring
\eqref{eq:zeta intro} to be satisfied for some maps $\zeta_{l}$
will appear, thereby providing possible new applications for this
theory.

The multi-frequency approach described in this work provides a good
alternative to the use of CGO or of the Runge approximation to find
solutions satisfying qualitative properties inside the domain. However,
a few points remain unsolved, and the same aspects are unsolved also
in the case of the Helmholtz equation studied in \cite{alberti2013multiple}.
First, is it possible to find a lower bound in \eqref{eq:zeta intro}?
Second, can we quantify the number of needed frequencies? In the case
of the Helmholtz equation, numerical simulations were performed and
suggest that in two dimensions three frequencies are sufficient. Third,
can we remove the assumption on $\sigma$ being close to a constant
(when \eqref{eq:det intro} is considered)? A possible way to tackle
the last point consists in the use of generic illuminations, that
in some situations may be sufficient for a general $\sigma$ (see
Remark~\ref{rem:dependence}). Some of these issues are addressed in \cite{ALBERTI-b-2014,capalb-analytic}.

This paper is organized as follows. In Section~\ref{sec:Main-Results}
we precisely describe the setting and state the main theoretical result.
Then, we apply this to three particular hybrid problems and study
the reconstruction procedures. In Section~\ref{sec:A-multi-frequency-approach}
we prove the main result, and more details on the multi-frequency
approach are given. In Section~\ref{sec:Applications-to-Hybrid}
the  examples of hybrid problems are discussed in more detail. Section~\ref{sec:The-Maxwell's-Equations}
is devoted to the proof of some results regarding well-posedness,
regularity and analyticity properties of Maxwell's equations which
are stated in Section~\ref{sec:A-multi-frequency-approach}.

\section{\label{sec:Main-Results}Main results}

Let $\Omega\subseteq\R^{3}$ be a $\Cl^{\order+1,1}$ bounded
and simply connected domain for some $\order\in\N^{*}$ with a simply
connected boundary $\partial\Omega$. Let $\mu,\epsilon,\sigma\in L^{\infty}(\Omega;\R^{3\times3})$
be real tensors satisfying the ellipticity conditions
\begin{equation}
\mina\left|\xi\right|^{2}\le\xi\cdot\mu\xi\le\maxa\left|\xi\right|^{2},\quad\mina\left|\xi\right|^{2}\le\xi\cdot\epsilon\xi\le\maxa\left|\xi\right|^{2},\quad\mina\left|\xi\right|^{2}\le\xi\cdot\sigma\xi\le\maxa\left|\xi\right|^{2},\qquad\xi\in\R^{3},\label{eq:assumption_ell}
\end{equation}
for some $\mina,\maxa>0$. Moreover, as far as the regularity of these
coefficients is concerned, we assume that
\begin{equation}
\mu,\epsilon,\sigma\in W^{\order,p}(\Omega;\R^{3\times3})\label{eq:regularity}
\end{equation}
for some $p>3$ and $\order\in\N^{*}$. We shall study problem \eqref{eq:main maxwell}
with illumination satisfying
\begin{equation}
\phi\in W^{\order,p}(\Omega;\C^{3}),\qquad\curl\phi\cdot\nu=0\;\text{on \ensuremath{\partial\Omega}.}\label{eq:assumption phi}
\end{equation}
The second of these conditions is required to make the problem well-posed
if $\k=0$ (see Section~\ref{sec:The-Maxwell's-Equations}). The
natural functional space associated to \eqref{eq:main maxwell} is
\[
\Hcurl=\{u\in L^{2}(\Omega;\C^{3}):\curl u\in\ld\}.
\]

\subsection{A multi-frequency approach to the boundary control of Maxwell's equations}

Let $\Kad=[K_{\text{min}},K_{\text{max}}]$ be the range of frequencies
we have access to, for some $0<K_{\text{min}}<K_{\text{max}}$. 
\begin{defn}
Given a finite set $K\subseteq\Kad$ and illuminations $\phi_{1},\dots,\phi_{b}$
satisfying \eqref{eq:assumption phi}, we say that $K\times\{\phi_{1},\dots,\phi_{b}\}$
is \emph{a set of measurements}.
\end{defn}
Let $K\times\{\phi_{1},\dots,\phi_{b}\}$ be a set of measurements.
For any $(\k,\phi_{i})\in K\times\{\phi_{1},\dots,\phi_{b}\}$ denote
the unique solution to \eqref{eq:main maxwell} in $\Hcurl^{2}$ by
$(\Ei,\Hi)$, namely
\begin{equation}
\left\{ \begin{array}{l}
\curl\Ei=\ii\k\mu\Hi\qquad\text{in \ensuremath{\Omega},}\\
\curl\Hi=-\ii q_{\k}\Ei\qquad\text{in \ensuremath{\Omega},}\\
\Ei\times\nu=\phi_{i}\times\nu\qquad\text{on \ensuremath{\partial\Omega},}
\end{array}\right.\label{eq:combined i}
\end{equation}
where 
\begin{equation}
q_{\k}=\k\epsilon+\ii\sigma.\label{eq:defqom}
\end{equation}
In Proposition~\ref{prop:well-posedness} we shall show that $(\Ei,\Hi)\in\Cl^{\order-1}(\overline{\Omega};\C^{6})$.

The reconstruction procedures in hybrid problems often requires the
availability of a certain number of illuminations such that the corresponding
solutions to \eqref{eq:combined i} and their derivatives up to the
$(\kappa-1)$-th order satisfy some non-zero conditions inside the
domain. Let $b\in\N^{*}$ be the number of illuminations and $r\in\N^{*}$
be the number of non-zero conditions. These conditions depend on the
particular problem under consideration, but it turns out that most
of them can be written in the following form.
\begin{defn}
\label{def:zeta-complete}Let $b,r\in\N^{*}$ be two positive integers,
$\p>0$ and let 
\begin{equation}
\zeta=(\zeta_{1},\dots,\zeta_{r})\colon\Cl^{\order-1}(\overline{\Omega};\C^{6}){}^{b}\longrightarrow\Cl(\overline{\Omega};\C)^{r}\quad\text{be analytic.}\label{eq:definition of zeta}
\end{equation}
A set of measurements $K\times\{\phi_{1},\dots,\phi_{b}\}$ is \emph{$\zeta$-complete}
if for every $x\in\overline{\Omega}$ there exists $\k=\k(x)\in K$
such that\foreignlanguage{american}{
\begin{equation}
\begin{aligned} & \text{1. }\bigl|\zeta_{1}\bigl((E_{\k}^{1},H_{\k}^{1}),\dots,(E_{\k}^{b},H_{\k}^{b})\bigr)(x)\bigr|\ge\p,\\
 & \,\vdots\qquad\qquad\qquad\quad\;\;\vdots\\
 & \text{r. }\bigl|\zeta_{r}\bigl((E_{\k}^{1},H_{\k}^{1}),\dots,(E_{\k}^{b},H_{\k}^{b})\bigr)(x)\bigr|\ge\p.
\end{aligned}
\label{eq:zeta-complete}
\end{equation}
}
\end{defn}
The notion of analyticity for maps between complex Banach spaces will
be given in Section~\ref{sec:A-multi-frequency-approach}. This notion
generalizes the usual one for maps of complex variable, and for now
we anticipate some basic facts (detailed in Lemma~\ref{lem:analytic functions}):
multilinear bounded functions are analytic, the composition of analytic
functions is analytic, and $\zeta$ is analytic if and only if $\zeta_{l}$
is analytic for every $l$.

The analyticity assumption on $\zeta$ is not particularly restrictive,
as it will be clear from the following three examples of maps $\zeta$'s
. In the next subsection, we shall see that these examples are motivated
by some hybrid problems.
\begin{example}
\label{exa:det}Take $b=3$, $r=1$, $\order=1$ and $\zeta^{(1)}$
defined by
\[
\zeta^{(1)}((u_{1},v_{1}),(u_{2},v_{2}),(u_{3},v_{3}))=\det\begin{bmatrix}u_{1} & u_{2} & u_{3}\end{bmatrix},\qquad(u_{i},v_{i})\in\Cl(\overline{\Omega};\C^{6}).
\]
The map $\zeta^{(1)}$ is multilinear and bounded, whence analytic.
In this case, the condition characterizing $\zeta^{(1)}$-complete
sets of measurements is\foreignlanguage{american}{
\begin{equation}
\bigl|\det\begin{bmatrix}E_{\k}^{1} & E_{\k}^{2} & E_{\k}^{3}\end{bmatrix}(x)\bigr|\ge\p.\label{eq:det-complete-1}
\end{equation}
}In other words, \eqref{eq:det-complete-1} signals the availability,
in every point, of three independent electric fields and, in particular,
of one non-vanishing electric field. 
\end{example}

\begin{example}
\label{exa:2}Take $b=6$, $r=1$ and $\order=2$. Consider
the function $\eta\colon\Cl^{1}(\overline{\Omega};\C^{3}){}^{2}\longrightarrow\Cl(\overline{\Omega};\C^{3})$
given by
\begin{equation}
\eta(u_{1},u_{2})=(\nabla u_{1})u_{2}-(\nabla u_{2})u_{1}+\div u_{1}u_{2}-\div u_{2}u_{1}-2\,^{t}(\nabla u_{1})u_{2}+2\,^{t}(\nabla u_{2})u_{1}.\label{eq:eta}
\end{equation}
Define now $\zeta^{(2)}\colon\Cl^{1}(\overline{\Omega};\C^{6}){}^{6}\longrightarrow\Cl(\overline{\Omega};\C)$
by
\[
\zeta^{(2)}((u_{1},v_{1}),\dots,(u_{6},v_{6}))=\det\begin{bmatrix}\eta(u_{1},u_{2}) & \eta(u_{3},u_{4}) & \eta(u_{5},u_{6})\end{bmatrix}.
\]
As before, the map $\zeta^{(2)}$ is multilinear and bounded, whence
analytic. The interpretation of the corresponding condition \eqref{eq:zeta-complete}
is not immediate, and the reader is referred to the following subsection
for an application. 
\end{example}

\begin{example}
\label{exa:3}Take $b=6$, $r=2$ and $\order=2$. As $r=2$, the
map $\zeta^{(3)}$ considered here involves two conditions. The first
one is given by $\zeta^{(2)}$, and the second one refers to the availability
of one non-vanishing electric field. More precisely, we define $\zeta^{(3)}\colon\Cl^{1}(\overline{\Omega};\C^{6}){}^{6}\longrightarrow\Cl(\overline{\Omega};\C)^{2}$
by
\[
\zeta^{(3)}((u_{1},v_{1}),\dots,(u_{6},v_{6}))=\bigl(\zeta^{(2)}((u_{1},v_{1}),\dots,(u_{6},v_{6}))\,,\,(u_{1})_{2}\bigr).
\]
The two components of this map are analytic, and so $\zeta^{(3)}$
is analytic.
\end{example}
Before studying the construction of $\zeta$-complete sets of measurements,
let us make a comment on \eqref{eq:main maxwell}. As already stated
in the introduction, our strategy to make use of several frequencies
starts with a study of \eqref{eq:main maxwell} when $\k=0$. For
$\k>0$, the well-posedness for this problem is classical \cite{MONK-2003}.
However, in order to make the problem well-posed in the case $\k=0$,
we need to add some constraints on $\H$, as it will be shown in Section~\ref{sec:The-Maxwell's-Equations}.
In particular, we look for solutions $(\E,\H)\in\Hcurl\times\Hmu$,
where
\[
\Hmu=\{v\in\Hcurl:\div(\mu v)=0\text{ in }\Omega,\;\mu v\cdot\nu=0\text{ on }\bo\}.
\]
We shall see that problem \eqref{eq:main maxwell} is well-posed in
$\Hcurl\times\Hmu$ for all $\k\ge0$.

The main result of this work deals with the construction of $\zeta$-complete
sets of measurements.
\begin{thm}
\label{thm:zeta-complete}Assume that \eqref{eq:assumption_ell} and
\eqref{eq:regularity} hold. Let $\so\in W^{\order,p}(\Omega;\R^{3\times3})$
be an arbitrary matrix-valued function satisfying \eqref{eq:assumption_ell},
$\phi_{1},\dots,\phi_{b}$ satisfy \eqref{eq:assumption phi} and
$\zeta$ be as in \eqref{eq:definition of zeta}. Suppose that
\begin{equation}
\zeta_{l}\bigl((\hat{E}_{0}^{1},\hat{H}_{0}^{1}),\dots,(\hat{E}_{0}^{b},\hat{H}_{0}^{b})\bigr)(x)\neq0,\qquad x\in\overline{\Omega},\, l=1,\dots,r,\label{eq:assumption k 0}
\end{equation}
where $(\hat{E}{}_{0}^{i},\hat{H}_{0}^{i})\in\Hcurl\times\Hmu$ is
the solution to \eqref{eq:combined i} with $\so$ in lieu of \textup{$\sigma$}
and $\k=0$, namely
\begin{equation}
\left\{ \begin{array}{l}
\curl\hat{E}{}_{0}^{i}=0\qquad\text{in \ensuremath{\Omega},}\\
\div(\so\hat{E}{}_{0}^{i})=0\qquad\text{in \ensuremath{\Omega},}\\
\hat{E}{}_{0}^{i}\times\nu=\phi_{i}\times\nu\qquad\text{on \ensuremath{\partial\Omega},}
\end{array}\right.\qquad\left\{ \begin{array}{l}
\curl\hat{H}_{0}^{i}=\so\hat{E}{}_{0}^{i}\qquad\text{in \ensuremath{\Omega},}\\
\div(\mu\hat{H}_{0}^{i})=0\qquad\text{in \ensuremath{\Omega},}\\
\mu\hat{H}_{0}^{i}\cdot\nu=0\qquad\text{on \ensuremath{\partial\Omega}.}
\end{array}\right.\label{eq: k  0}
\end{equation}
There exists $\delta>0$ such that if $\left\Vert \sigma-\so\right\Vert _{W^{\order,p}(\Omega;\R^{3\times3})}\le\delta$
then we can choose a finite $K\subseteq\Kad$ such that 
\[
K\times\{\phi_{1},\dots,\phi_{b}\}
\]
is a $\zeta$-complete set of measurements.
\end{thm}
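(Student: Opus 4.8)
The plan is to freeze a point $x\in\overline{\Omega}$, reduce the inequalities in \eqref{eq:zeta-complete} to a statement about \emph{scalar} analytic functions of the frequency $\k$, use the smallness of $\sigma-\so$ to see that these functions do not vanish at $\k=0$, and then pass from this pointwise information to a single finite frequency set $K$ by compactness of $\overline{\Omega}$. Concretely: by the well-posedness, regularity and analyticity results for Maxwell's equations stated in Section~\ref{sec:A-multi-frequency-approach} (in particular Proposition~\ref{prop:well-posedness}), for fixed coefficients the map
\[
F\colon\k\longmapsto\bigl((E_{\k}^{1},H_{\k}^{1}),\dots,(E_{\k}^{b},H_{\k}^{b})\bigr)
\]
is analytic from a connected open $U\subseteq\C$ with $[0,K_{\text{max}}]\subseteq U$, hence $\{0\}\cup\Kad\subseteq U$ (no real resonances occur, thanks to the absorbing term $\ii\sigma$), into $\Cl^{\order-1}(\overline{\Omega};\C^{6}){}^{b}$; here $F(0)$ is the solution of \eqref{eq:combined i} at $\k=0$ in $\Hcurl\times\Hmu$, and the survival of analyticity across the degenerate frequency $\k=0$ is exactly what the condition $\curl\phi_{i}\cdot\nu=0$ on $\bo$ in \eqref{eq:assumption phi} is for. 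Composing $F$ with the analytic map $\zeta$ and with evaluation at $x$ (bounded linear, hence analytic), and invoking Lemma~\ref{lem:analytic functions}, one obtains for each $x\in\overline{\Omega}$ and $l=1,\dots,r$ a scalar analytic function $f_{l}^{x}\colon U\to\C$, $f_{l}^{x}(\k)=\zeta_{l}\bigl(F(\k)\bigr)(x)$. Since $\k\mapsto\zeta_{l}(F(\k))$ is continuous from $U$ into $\Cl(\overline{\Omega};\C)$, the map $(x,\k)\mapsto f_{l}^{x}(\k)$ is jointly continuous on $\overline{\Omega}\times U$.

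Next I would treat the seed at $\k=0$. Writing $G_{\tau}$ for the solution of \eqref{eq:combined i} at $\k=0$ with a coefficient $\tau$ in place of $\sigma$, one has $f_{l}^{x}(0)=\zeta_{l}(G_{\sigma})(x)$, while $G_{\so}=\bigl((\hat{E}_{0}^{1},\hat{H}_{0}^{1}),\dots,(\hat{E}_{0}^{b},\hat{H}_{0}^{b})\bigr)$ is the object in \eqref{eq:assumption k 0}. The uniform well-posedness and regularity estimates of Section~\ref{sec:A-multi-frequency-approach} give that $\tau\mapsto G_{\tau}\in\Cl^{\order-1}(\overline{\Omega};\C^{6}){}^{b}$ is continuous (indeed locally Lipschitz) at $\so$. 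Set $c_{l}:=\min_{x\in\overline{\Omega}}|\zeta_{l}(G_{\so})(x)|$, strictly positive by hypothesis \eqref{eq:assumption k 0}, the continuity of $\zeta_{l}(G_{\so})\in\Cl(\overline{\Omega};\C)$, and compactness of $\overline{\Omega}$. Using continuity of $\zeta_{l}$ and of evaluation, I can then pick $\delta>0$ so small that $\left\Vert \sigma-\so\right\Vert _{W^{\order,p}(\Omega;\R^{3\times3})}\le\delta$ forces $\sup_{x\in\overline{\Omega}}|f_{l}^{x}(0)-\zeta_{l}(G_{\so})(x)|\le c_{l}/2$ for all $l$, whence $|f_{l}^{x}(0)|\ge c_{l}/2>0$ for all $x\in\overline{\Omega}$ and $l=1,\dots,r$.

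Finally I would pass from points to a finite set; this is where the multiple-frequency mechanism enters. Fix $\sigma$ with $\left\Vert \sigma-\so\right\Vert _{W^{\order,p}(\Omega;\R^{3\times3})}\le\delta$. For each $x$ and $l$, $f_{l}^{x}$ is analytic on the connected open set $U$ and $f_{l}^{x}(0)\ne0$, so $f_{l}^{x}\not\equiv0$; by the identity theorem its zeros in $U$ are isolated, so $Z^{x}:=\{\k\in\Kad:f_{l}^{x}(\k)=0\text{ for some }l\}$ is finite, and since $\Kad$ is infinite I may pick $\k_{x}\in\Kad\setminus Z^{x}$, giving $m_{x}:=\min_{l}|f_{l}^{x}(\k_{x})|>0$. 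By the joint continuity from the first step there is a relatively open $U_{x}\subseteq\overline{\Omega}$ with $x\in U_{x}$ and $\min_{l}|f_{l}^{y}(\k_{x})|\ge m_{x}/2$ for all $y\in U_{x}$. The $U_{x}$ cover the compact set $\overline{\Omega}$, so finitely many $U_{x_{1}},\dots,U_{x_{N}}$ already do; put $K:=\{\k_{x_{1}},\dots,\k_{x_{N}}\}\subseteq\Kad$, a finite set. Given $x\in\overline{\Omega}$, pick $j$ with $x\in U_{x_{j}}$; then $\k_{x_{j}}\in K$ and $\bigl|\zeta_{l}(F(\k_{x_{j}}))(x)\bigr|=|f_{l}^{x}(\k_{x_{j}})|\ge m_{x_{j}}/2$ for all $l=1,\dots,r$. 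Hence every quantity in \eqref{eq:zeta-complete} is bounded below, uniformly in $x\in\overline{\Omega}$, by the positive constant $\min_{1\le j\le N}m_{x_{j}}/2$ (which we may take as the threshold $\p$), so $K\times\{\phi_{1},\dots,\phi_{b}\}$ is a $\zeta$-complete set of measurements.

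As for the difficulty: the reduction to the $f_{l}^{x}$ and the covering argument are routine once the inputs are in place, and the underlying idea --- the zero level sets of the $f_{l}^{x}$ move as $\k$ varies, so finitely many frequencies avoid them everywhere --- is the same as in the Helmholtz case of \cite{alberti2013multiple}. The genuine obstacles are the two facts imported from Section~\ref{sec:A-multi-frequency-approach} and proved in Section~\ref{sec:The-Maxwell's-Equations}: first, well-posedness of \eqref{eq:main maxwell} for all $\k\ge0$, which is delicate at $\k=0$ because one must switch to the divergence-constrained space $\Hcurl\times\Hmu$ and because the refractive index $\epsilon+\ii\k^{-1}\sigma$ is complex and frequency-dependent; and second, the $\Cl^{\order-1}$ regularity of $(E_{\k}^{i},H_{\k}^{i})$ under the low-regularity assumptions \eqref{eq:regularity} together with the analyticity of $\k\mapsto(E_{\k}^{i},H_{\k}^{i})$ on a connected complex neighbourhood of $[0,K_{\text{max}}]$ --- the latter needing an analytic-perturbation argument that carries analyticity through the degenerate frequency $\k=0$.
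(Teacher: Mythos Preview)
Your proposal is correct and follows essentially the same approach as the paper: the paper packages the perturbation step as Lemma~\ref{lem:perturbation} (proving the Lipschitz dependence $\tau\mapsto G_{\tau}$ that you invoke) and the analyticity-plus-compactness step as Lemma~\ref{lem:transfer}, with the only cosmetic differences being that the paper multiplies the $\zeta_{l}$ into a single analytic function $g_{x}(\k)=\prod_{l}\zeta_{l}(\ldots)(x)$ and selects the frequencies from a prescribed converging sequence in $\Kad$ rather than from all of $\Kad$.
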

In the following remark we discuss assumption \eqref{eq:assumption k 0}
and the dependence of the construction of the illuminations on the
electromagnetic parameters.
\begin{rem}
\label{rem:dependence}Suppose that we are in the simpler case $\so=\sigma$.
This result states that we can construct a $\zeta$-complete set of
measurements, if the illuminations $\phi_{1},\dots,\phi_{b}$ are
chosen in such a way that \eqref{eq:assumption k 0} holds true. It
is in general much easier to satisfy \eqref{eq:assumption k 0} than
\eqref{eq:zeta-complete}, as $\k=0$ makes problem \eqref{eq:combined i}
simpler (see next subsection). Note that \eqref{eq: k  0} does not
depend on $\epsilon$, so that the construction of the illuminations
$\phi_{1},\dots,\phi_{b}$ is independent of $\epsilon$. For the
same reason, their construction may depend on $\sigma$ and $\mu$.
However, in the cases where the maps $\zeta_{l}$ involve only the
electric field $E$, it depends on $\sigma$, and not on $\epsilon$
and $\mu$ (see Corollary~\ref{cor:zeta-complete}).

A typical application of the theorem is in the case where $\sigma$
is a small perturbation of a known constant tensor $\so$. Then, the
construction of the illuminations $\phi_{1},\dots,\phi_{b}$ is independent
of $\sigma$. A similar argument would work if $\mu$ is a small perturbation
of a constant tensor $\hat{\mu}$ . We have decided to omit it for
simplicity, since in the applications we have in mind the maps $\zeta_{l}$
do not depend on the magnetic field $H$, thereby making assumption
\eqref{eq:assumption k 0} independent of $\mu$.

In \cite{alberti2013multiple}, we showed that in the case of the
Helmholtz equation there exist \emph{occulting} illuminations, that
is, boundary conditions for which a finite number of frequencies is
not sufficient, and so the assumption corresponding to \eqref{eq:assumption k 0}
cannot be completely removed. Yet, it is very likely that this assumption
can be considerably weakened. Further investigations in this direction
may lead to a construction totally independent of the electromagnetic
parameters. For example, it is natural to wonder whether generic illuminations
are non-occulting. In the case of the Helmholtz equation, we can prove
that this is the case if we consider the map $\zeta(u)=\nabla u$ \cite{2014albertidphil}.
\end{rem}
In the following remark we discuss the construction of the finite
set of frequencies $K$.
\begin{rem}
\label{rem:frequencies}The proof of the theorem is constructive as
far as the choice of the set $K$ is concerned. Given a converging
sequence in $\Kad$, a suitable finite subsequence will be sufficient.
More details are given in Lemma~\ref{lem:transfer}. An upper bound
for the number of needed frequencies is still unavailable. Numerical
simulations for the Helmholtz model were performed in \cite{alberti2013multiple},
which suggest that three frequencies are sufficient in two dimensions.
We believe that four frequencies should be sufficient in the case
of Maxwell's equations in dimension three.
\end{rem}
Finally, we compare this construction with the CGO approach.
\begin{rem}
\label{rem:regularity of coefficients}The regularity of the coefficients
required for this approach is much lower than the regularity required
if CGO solutions are used. Indeed, if the conditions depend on the
derivatives up to the $(\kappa-1)$-th order, with CGO we need at least $W^{\kappa+2,p}$ \cite{chen-yang-2013}, while with this approach we
require the parameters to be in $W^{\kappa,p}$ for some $p>3$. Note that these regularity assumptions are in some sense minimal: in order for \eqref{eq:zeta-complete} to be meaningful pointwise, we need $E^i_\omega,H^i_\omega\in C^{\kappa-1}$.

Moreover, as discussed in Remark~\ref{rem:dependence} and in $\S$~\ref{sub:Applications-to-Hybrid} below, by using this approach the construction  is usually  explicit and does not require highly oscillatory illuminations.
\end{rem}
In the case where the conditions given by the map $\zeta$ are independent
of the magnetic field $H$, Theorem~\ref{thm:zeta-complete} can
be rewritten in the following form.
\begin{cor}
\label{cor:zeta-complete}Assume that \eqref{eq:assumption_ell} and
\eqref{eq:regularity} hold. Let $\so\in W^{\order,p}(\Omega;\R^{3\times3})$
satisfy \eqref{eq:assumption_ell} and $\zeta$ be as in \eqref{eq:definition of zeta}
and independent of $H$. Take $\psi_{1},\dots,\psi_{b}\in W^{\kappa+1,p}(\Omega;\C)$.
Suppose that
\begin{equation}
\zeta_{l}\bigl(\nabla w^{1},\dots,\nabla w^{b}\bigr)(x)\neq0,\qquad x\in\overline{\Omega},\, l=1,\dots,r,\label{eq:assumption k 0-cor}
\end{equation}
where $w^{i}\in\Hone$ is the solution to
\[
\left\{ \begin{array}{l}
\div(\so\nabla w^{i})=0\qquad\text{in \ensuremath{\Omega},}\\
w^{i}=\psi_{i}\qquad\text{on \ensuremath{\partial\Omega}.}
\end{array}\right.
\]
There exists $\delta>0$ such that if $\left\Vert \sigma-\so\right\Vert _{W^{\order,p}(\Omega;\R^{3\times3})}\le\delta$
then we can choose a finite $K\subseteq\Kad$ such that 
\[
K\times\{\nabla\psi_{1},\dots,\nabla\psi_{b}\}
\]
is a $\zeta$-complete set of measurements.
\end{cor}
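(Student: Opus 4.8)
The plan is to deduce Corollary~\ref{cor:zeta-complete} from Theorem~\ref{thm:zeta-complete} by showing that the map $\zeta$ applied to the electric fields alone can be re-encoded as a map $\tilde\zeta$ on the full pairs $(E,H)$ to which the theorem applies, and that the $\k=0$ hypothesis \eqref{eq:assumption k 0-cor} for $\zeta$ is exactly hypothesis \eqref{eq:assumption k 0} for $\tilde\zeta$. First I would set $\tilde\zeta((u_1,v_1),\dots,(u_b,v_b)) = \zeta(u_1,\dots,u_b)$, which simply ignores the magnetic components; since $\zeta$ is analytic on $\Cl^{\order-1}(\overline\Omega;\C^{3})^b$ and the projection $(u,v)\mapsto u$ is bounded linear on $\Cl^{\order-1}(\overline\Omega;\C^{6})$, the composition $\tilde\zeta$ is analytic in the sense of \eqref{eq:definition of zeta} by the composition fact stated before the examples. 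Next I would choose the illuminations $\phi_i$ feeding into the theorem to be $\phi_i = \nabla\psi_i$ for the given scalar boundary data $\psi_i \in W^{\order+1,p}(\Omega;\C)$; one must check that $\nabla\psi_i$ satisfies \eqref{eq:assumption phi}, i.e.\ $\nabla\psi_i\in W^{\order,p}(\Omega;\C^3)$ (immediate since $\psi_i\in W^{\order+1,p}$) and $\curl(\nabla\psi_i)\cdot\nu = 0$ on $\bo$ (immediate since $\curl\nabla\psi_i = 0$ identically).

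The crux is then to identify the $\k=0$ problem \eqref{eq: k  0} with the scalar elliptic problem in the corollary. With $\phi_i=\nabla\psi_i$, the system for $\hat E_0^i$ reads $\curl\hat E_0^i = 0$, $\div(\so\hat E_0^i)=0$ in $\Omega$, and $\hat E_0^i\times\nu = \nabla\psi_i\times\nu$ on $\bo$. Since $\Omega$ is simply connected with simply connected boundary, $\curl\hat E_0^i = 0$ forces $\hat E_0^i = \nabla w^i$ for some $w^i\in\Hone$; the boundary condition $\nabla w^i\times\nu = \nabla\psi_i\times\nu$ on $\bo$ says the tangential gradient of $w^i-\psi_i$ vanishes on $\bo$, so $w^i - \psi_i$ is locally constant on $\bo$, hence (connectedness of $\bo$) constant there, and adjusting $w^i$ by that constant we get $w^i = \psi_i$ on $\bo$; finally $\div(\so\nabla w^i)=0$ in $\Omega$ is the interior equation. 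Thus $\hat E_0^i = \nabla w^i$ with $w^i$ exactly the solution appearing in the corollary, and the regularity statement $\hat E_0^i\in\Cl^{\order-1}(\overline\Omega;\C^3)$ (from Proposition~\ref{prop:well-posedness}, alluded to in the theorem) gives $w^i\in\Cl^{\order}(\overline\Omega;\C)$, so $\nabla w^i$ makes sense as an argument of $\zeta$.

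Putting these together: hypothesis \eqref{eq:assumption k 0-cor}, namely $\zeta_l(\nabla w^1,\dots,\nabla w^b)(x)\neq0$ for all $x\in\overline\Omega$, is precisely $\tilde\zeta_l((\hat E_0^1,\hat H_0^1),\dots,(\hat E_0^b,\hat H_0^b))(x)\neq0$, which is \eqref{eq:assumption k 0} for $\tilde\zeta$ (the magnetic fields $\hat H_0^i$ are irrelevant because $\tilde\zeta$ does not see them). Theorem~\ref{thm:zeta-complete} then yields $\delta>0$ and a finite $K\subseteq\Kad$ such that $K\times\{\nabla\psi_1,\dots,\nabla\psi_b\}$ is $\tilde\zeta$-complete whenever $\|\sigma-\so\|_{W^{\order,p}}\le\delta$; and since $\tilde\zeta$ evaluated on the true solutions $(\Ei,\Hi)$ of \eqref{eq:combined i} equals $\zeta$ evaluated on $\Ei$, $\tilde\zeta$-completeness of this set is the same as $\zeta$-completeness, which is the assertion of the corollary.

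The only genuinely nontrivial point is the Hodge-type argument in the second paragraph — that a curl-free field with prescribed tangential trace on a simply connected domain with connected boundary is the gradient of a function with the corresponding Dirichlet trace — and the matching of function-space regularity there; everything else is bookkeeping with the definitions of analyticity and of $\zeta$-completeness. I would state that Hodge step as the one place to be careful, citing the simple connectedness of $\Omega$ and $\bo$ assumed in Section~\ref{sec:Main-Results}, and perhaps referencing the well-posedness discussion of Section~\ref{sec:The-Maxwell's-Equations} for the identification of the $\k=0$ electric problem with the scalar conductivity equation.
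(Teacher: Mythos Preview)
Your proposal is correct and is exactly the argument the paper has in mind: the paper does not give a separate proof of Corollary~\ref{cor:zeta-complete} but simply presents it as the rewriting of Theorem~\ref{thm:zeta-complete} when $\zeta$ is independent of $H$, and your reduction (projecting out $H$, setting $\phi_i=\nabla\psi_i$, and identifying $\hat E_0^i$ with $\nabla w^i$ via the simply-connectedness of $\Omega$ and $\partial\Omega$) is the intended derivation. One minor slip: the $\Cl^{\order-1}$ regularity of $\hat E_0^i$ comes from Proposition~\ref{prop:regularity}, not Proposition~\ref{prop:well-posedness}.
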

In other words, if the required qualitative properties do not depend
on $H$, then the problem of finding $\zeta$-complete sets is completely
reduced to satisfying the same conditions for the gradients of solutions
to the conductivity equation. This last problem has received a considerable
attention in the literature (see e.g. \cite{alessandrinimagnanini1994,cap2009,widlak2012hybrid,bal2011reconstruction,bal-courdurier-2013,bal2012inversediffusion,bal-guo-monard-2013}).

\subsection{\label{sub:Applications-to-Hybrid}Applications to hybrid problems}

In this subsection we apply the theory introduced so far to three
reconstruction procedures arising in hybrid imaging. In each case,
the reconstruction is possible if we have a $\zeta$-complete sets
of measurements, where $\zeta$ is one of the maps discussed in Examples
\ref{exa:det}, \ref{exa:2} and \ref{exa:3}. Only the main points
are discussed here, and full details are given in Section~\ref{sec:Applications-to-Hybrid}.

\subsubsection{\label{subsub:Reconstruction-of-first method}Reconstruction of \texorpdfstring{$\epsilon$}{e}
and \texorpdfstring{$\sigma$}{s} from knowledge of internal magnetic fields - first method}

Combining boundary measurements with an MRI scanner, we can measure
the internal magnetic fields $\Hi$ \cite{seo-kim-etal-2012,bal2013reconstruction}.
Assuming $\mu=1$, the electromagnetic parameters to image are $\epsilon$
and $\sigma$, and both are assumed isotropic. We shall consider two
different reconstruction algorithms. For the first method, the relevant
map $\zeta$ is the determinant $\zeta^{(1)}$ introduced in Example
\ref{exa:det}.

Let $K\times\{\phi_{1},\phi_{2},\phi_{3}\}$ be a set of measurements
and consider problem \eqref{eq:combined i}. We shall show that $q_{\k}$
given by \eqref{eq:defqom} satisfies a first order partial differential
equation in $\Omega$, with coefficients depending on the magnetic
fields, thereby known. This equation is of the form
\[
\nabla q_{\k}M_{\k}^{(1)}=F^{(1)}(\k,q_{\k},\Hi,\Delta\Hi)\qquad\text{in \ensuremath{\Omega},}
\]
where $M_{\k}^{(1)}$ is the $3\times6$ matrix-valued function given
by
\[
M_{\k}^{(1)}=\left[\begin{array}{ccccc}
\curl H_{\k}^{1}\times\e_{1} & \curl H_{\k}^{1}\times\e_{2} & \cdots & \curl H_{\k}^{3}\times\e_{1} & \curl H_{\k}^{3}\times\e_{2}\end{array}\right],
\]
and $F^{(1)}$ is a given vector-valued function. If 
\begin{equation}
\bigl|\det\begin{bmatrix}E_{\k}^{1} & E_{\k}^{2} & E_{\k}^{3}\end{bmatrix}(x)\bigr|>0,\label{eq:det}
\end{equation}
then $M_{\k}^{(1)}(x)$ admits a right inverse $(M_{\k}^{(1)})^{-1}(x)$.
The equation for $q_{\k}$ becomes

\begin{equation}
\nabla q_{\k}(x)=F^{(1)}(\k,q_{\k},\Hi,\Delta\Hi)(M_{\k}^{(1)})^{-1}(x).\label{eq:final-1}
\end{equation}
Suppose now that $K\times\{\phi_{1},\phi_{2},\phi_{3}\}$ is $\det$-complete.
This implies that \eqref{eq:det} is satisfied everywhere in $\Omega$
for some $\k=\k(x)$. We shall see that in this case it is possible
to integrate \eqref{eq:final-1} and reconstruct uniquely $q_{\k}$,
provided that $q_{\k}$ is known at one point of $\overline{\Omega}$.

In this example, we have seen that $\zeta^{(1)}$-complete sets are
sufficient to be able to image the electromagnetic parameters. Moreover,
$\zeta^{(1)}$-complete sets can be explicitly constructed by using
Corollary~\ref{cor:zeta-complete}.
\begin{prop}
\label{prop:det-complete}Assume that \eqref{eq:assumption_ell} and
\eqref{eq:regularity} hold with $\order=1$ and let $\so\in\R^{3\times3}$
be positive definite. There exists $\delta>0$ such that if $\left\Vert \sigma-\so\right\Vert _{W^{1,p}(\Omega;\R^{3\times3})}\le\delta$
then we can choose a finite $K\subseteq\Kad$ such that 
\[
K\times\{\e_{1},\e_{2},\e_{3}\}
\]
is a $\zeta^{(1)}$-complete set of measurements.\end{prop}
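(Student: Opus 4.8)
The plan is to deduce the statement directly from Corollary~\ref{cor:zeta-complete}. The map $\zeta^{(1)}$ of Example~\ref{exa:det} reads off only the electric components $u_1,u_2,u_3$ of its arguments, hence is independent of $H$, so the corollary applies with $\order=1$, $b=3$ and $r=1$. What remains is to produce scalar potentials $\psi_1,\psi_2,\psi_3$ whose gradients are the constant fields $\e_1,\e_2,\e_3$, and to check the nonvanishing hypothesis \eqref{eq:assumption k 0-cor} at $\k=0$.

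First I would take $\psi_i(x)=x_i$, the $i$-th coordinate function, for $i=1,2,3$. These are polynomials, so $\psi_i\in W^{2,p}(\Omega;\C)=W^{\order+1,p}(\Omega;\C)$, and $\nabla\psi_i=\e_i$. Since $\so$ is a constant positive definite matrix it is an admissible choice of $\so\in W^{1,p}(\Omega;\R^{3\times3})$ satisfying \eqref{eq:assumption_ell}, and $\div(\so\nabla\psi_i)=\div(\so\e_i)=0$ in $\Omega$ because $\so\e_i$ is a constant vector field. As $\psi_i$ also agrees with its own trace on $\partial\Omega$, uniqueness for the conductivity equation forces $w^i=\psi_i$, hence $\nabla w^i=\e_i$.

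Consequently
\[
\zeta^{(1)}\bigl(\nabla w^1,\nabla w^2,\nabla w^3\bigr)(x)=\det\begin{bmatrix}\e_1 & \e_2 & \e_3\end{bmatrix}=1\neq0,\qquad x\in\overline{\Omega},
\]
so \eqref{eq:assumption k 0-cor} holds. Corollary~\ref{cor:zeta-complete} then provides $\delta>0$ such that whenever $\left\Vert \sigma-\so\right\Vert _{W^{1,p}(\Omega;\R^{3\times3})}\le\delta$ there is a finite $K\subseteq\Kad$ for which $K\times\{\nabla\psi_1,\nabla\psi_2,\nabla\psi_3\}=K\times\{\e_1,\e_2,\e_3\}$ is a $\zeta^{(1)}$-complete set of measurements, which is exactly the assertion.

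I do not expect a genuine obstacle: the proposition is a transparent specialization of the corollary, the point being that the constant-coefficient conductivity problem with affine Dirichlet data has the affine function as its unique solution, so that the $\k=0$ condition \eqref{eq:assumption k 0-cor} collapses to the trivial identity $\det I_3=1$. The only things worth double-checking are the regularity bookkeeping — with $\order=1$ the hypotheses \eqref{eq:assumption_ell} and \eqref{eq:regularity} are precisely those assumed, the potentials $\psi_i=x_i$ lie in the required space $W^{\order+1,p}$, and the resulting illuminations $\e_i$ satisfy \eqref{eq:assumption phi} since $\curl\e_i=0$ — and that a constant positive definite $\so\in\R^{3\times3}$ indeed qualifies as an admissible coefficient for Corollary~\ref{cor:zeta-complete}.
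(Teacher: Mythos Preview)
Your proof is correct and follows exactly the same approach as the paper: apply Corollary~\ref{cor:zeta-complete} with $\psi_i=x_i$, observe that $w^i=x_i$ since $\so$ is constant, and compute $\det[\e_1\;\e_2\;\e_3]=1$. You have simply filled in more of the routine verifications (regularity of $\psi_i$, admissibility of $\so$, uniqueness for the conductivity equation) than the paper bothers to spell out.
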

\begin{proof}
We want to apply Corollary~\ref{cor:zeta-complete} with $\zeta=\zeta^{(1)}$
and $\psi_{i}=x_{i}$ for $i=1,2,3$. We only need to show that \eqref{eq:assumption k 0-cor}
holds. Since $w^{i}=x_{i}$, for every $x\in\overline{\Omega}$ there
holds
\[
\zeta\bigl(\nabla w^{1},\nabla w^{2},\nabla w^{3}\bigr)(x)=\det\begin{bmatrix}\e_{1} & \e_{2} & \e_{3}\end{bmatrix}=1\neq0,
\]
as desired.
\end{proof}

\subsubsection{\label{subsub:Reconstruction-of-second method}Reconstruction of
\texorpdfstring{$\epsilon$}{e} and \texorpdfstring{$\sigma$}{s} from knowledge of internal magnetic fields
- second method}

We consider the reconstruction procedure discussed in \cite{bal2013reconstruction}
for the same hybrid problem studied before. As we shall see, in this
case the relevant map $\zeta$ is $\zeta^{(2)}$, introduced in Example
\ref{exa:2}.

Let $K\times\{\phi_{1},\dots,\phi_{6}\}$ be a set of measurements
and consider problem \eqref{eq:combined i}. We shall prove that $q_{\k}$
satisfies a first order partial differential equation in $\Omega$,
with coefficients depending on the magnetic fields, thereby known.
This equation is of the form
\begin{equation}
\nabla q_{\k}M_{\k}^{(2)}=q_{\k}F^{(2)}(\curl\Hi)\qquad\text{in \ensuremath{\Omega},}\label{eq:eq with M2}
\end{equation}
where $M_{\k}^{(2)}$ is the $3\times3$ matrix-valued function given
by
\[
M_{\k}^{(2)}=\left[\begin{array}{ccc}
\eta(\curl H_{\k}^{1},\curl H_{\k}^{2}) & \eta(\curl H_{\k}^{3},\curl H_{\k}^{4}) & \eta(\curl H_{\k}^{5},\curl H_{\k}^{6})\end{array}\right],
\]
with $\eta$ given by \eqref{eq:eta}, and $F^{(2)}$ is a vector-valued
function. It turns out that $\eta(\curl H_{\k}^{i},\curl H_{\k}^{j})=-q_{\k}^{2}\eta(\Ei,E_{\k}^{j})$,
whence
\[
M_{\k}^{(2)}=-q_{\k}^{2}\left[\begin{array}{ccc}
\eta(E_{\k}^{1},E_{\k}^{2}) & \eta(E_{\k}^{3},E_{\k}^{4}) & \eta(E_{\k}^{5},E_{\k}^{6})\end{array}\right].
\]
Therefore, if 
\begin{equation}
\bigl|\det\begin{bmatrix}\eta(E_{\k}^{1},E_{\k}^{2}) & \eta(E_{\k}^{3},E_{\k}^{4}) & \eta(E_{\k}^{5},E_{\k}^{6})\end{bmatrix}(x)\bigr|>0,\label{eq:det-1}
\end{equation}
then the equation for $q_{\k}$ becomes

\begin{equation}
\nabla q_{\k}(x)=q_{\k}F^{(2)}(\curl\Hi)(M_{\k}^{(2)})^{-1}(x).\label{eq:final-1-1}
\end{equation}
Suppose now that $K\times\{\phi_{1},\phi_{2},\phi_{3}\}$ is $\zeta^{(2)}$-complete.
This implies that \eqref{eq:det-1} is satisfied everywhere in $\Omega$
for some $\k=\k(x)$. As in the previous case, it is possible to integrate
\eqref{eq:final-1-1} and reconstruct $q_{\k}$ uniquely, provided
that $q_{\k}$ is known at one point of $\overline{\Omega}$.

In this example, we have seen that $\zeta^{(2)}$-complete sets are
sufficient to be able to reconstruct the electromagnetic parameters.
As in the previous case, $\zeta^{(2)}$-complete sets can be explicitly
constructed using Corollary~\ref{cor:zeta-complete}. Let $I$ denote
the $3\times3$ identity matrix.
\begin{prop}
\label{prop:zeta2-complete}Assume that \eqref{eq:assumption_ell}
and \eqref{eq:regularity} hold with $\order=2$ and take $\so>0$.
There exists $\delta>0$ such that if $\left\Vert \sigma-\so I\right\Vert _{W^{2,p}(\Omega;\R^{3\times3})}\le\delta$
then we can choose a finite $K\subseteq\Kad$ such that 
\[
K\times\{\e_{2},\nabla(x_{1}x_{2}),\e_{3},\nabla(x_{2}x_{3}),\e_{1},\nabla(x_{1}x_{3})\}
\]
is a $\zeta^{(2)}$-complete set of measurements.\end{prop}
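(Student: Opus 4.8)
The strategy is to verify the hypotheses of Corollary~\ref{cor:zeta-complete} with $\zeta=\zeta^{(2)}$, $\kappa=2$, and the six scalar potentials $\psi_{1}=x_{2}$, $\psi_{2}=x_{1}x_{2}$, $\psi_{3}=x_{3}$, $\psi_{4}=x_{2}x_{3}$, $\psi_{5}=x_{1}$, $\psi_{6}=x_{1}x_{3}$, so that the six illuminations are $\nabla\psi_{i}$, i.e.\ $\e_{2}$, $\nabla(x_{1}x_{2})$, $\e_{3}$, $\nabla(x_{2}x_{3})$, $\e_{1}$, $\nabla(x_{1}x_{3})$. Since $\so$ is a positive constant scalar (times $I$), the conductivity equation $\div(\so\nabla w^{i})=0$ reduces to $\Delta w^{i}=0$, and each $\psi_{i}$ above is harmonic (the linear ones trivially, and $x_{1}x_{2}$, $x_{2}x_{3}$, $x_{1}x_{3}$ are harmonic because they have no pure second derivatives). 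Hence $w^{i}=\psi_{i}$ is the solution, and it remains only to check the nonvanishing condition \eqref{eq:assumption k 0-cor} with $w^{i}=\psi_{i}$.

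The key computation is to evaluate $\eta(\nabla\psi_{1},\nabla\psi_{2})$, $\eta(\nabla\psi_{3},\nabla\psi_{4})$, $\eta(\nabla\psi_{5},\nabla\psi_{6})$ and show the resulting $3\times3$ matrix is everywhere invertible. First I would simplify $\eta$ when both arguments are gradients of harmonic functions: for $u_{1}=\nabla f$, $u_{2}=\nabla g$ one has $\nabla u_{i}$ symmetric and $\div u_{i}=\Delta f=0$, $\Delta g=0$, so in \eqref{eq:eta} the divergence terms vanish and $(\nabla u_{1})u_{2}={}^{t}(\nabla u_{1})u_{2}$, giving $\eta(\nabla f,\nabla g)=-(\nabla^{2}f)\nabla g+(\nabla^{2}g)\nabla f$. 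Now with $f=\psi_{1}=x_{2}$ we get $\nabla^{2}f=0$ and $\nabla f=\e_{2}$, while $g=\psi_{2}=x_{1}x_{2}$ gives $\nabla^{2}g=\e_{1}\e_{2}^{t}+\e_{2}\e_{1}^{t}$ and $\nabla g=x_{2}\e_{1}+x_{1}\e_{2}$, so $\eta(\nabla\psi_{1},\nabla\psi_{2})=(\nabla^{2}g)\e_{2}=\e_{1}$, a \emph{constant} vector. By the obvious cyclic symmetry of the chosen potentials ($1\!\to\!2\!\to\!3$ on indices), $\eta(\nabla\psi_{3},\nabla\psi_{4})=\e_{2}$ and $\eta(\nabla\psi_{5},\nabla\psi_{6})=\e_{3}$. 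Therefore
\[
\det\begin{bmatrix}\eta(\nabla\psi_{1},\nabla\psi_{2}) & \eta(\nabla\psi_{3},\nabla\psi_{4}) & \eta(\nabla\psi_{5},\nabla\psi_{6})\end{bmatrix}=\det\begin{bmatrix}\e_{1} & \e_{2} & \e_{3}\end{bmatrix}=1\neq0
\]
at every $x\in\overline{\Omega}$, so \eqref{eq:assumption k 0-cor} holds.

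With the hypothesis of Corollary~\ref{cor:zeta-complete} verified, the conclusion is immediate: there exists $\delta>0$ such that $\lVert\sigma-\so I\rVert_{W^{2,p}(\Omega;\R^{3\times3})}\le\delta$ implies $K\times\{\nabla\psi_{1},\dots,\nabla\psi_{6}\}$ is $\zeta^{(2)}$-complete for a suitable finite $K\subseteq\Kad$, which is exactly the claimed set of illuminations. The only mild subtlety — and the sole place where any care is needed — is the bookkeeping in the $\eta$ computation: one must be careful about the transpose conventions in \eqref{eq:eta} and about keeping track of the point dependence before observing that the cross terms cancel to leave a constant. Everything else is a direct appeal to the corollary; in particular the choice of finite frequency set $K$ and the perturbation argument in $\sigma$ are entirely inherited and require no new work here.
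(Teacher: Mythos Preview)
Your proof is correct and follows exactly the paper's approach: apply Corollary~\ref{cor:zeta-complete} with the same six potentials $\psi_i$, note that $w^i=\psi_i$ since each $\psi_i$ is harmonic, and compute that the determinant in \eqref{eq:assumption k 0-cor} equals $1$. The paper merely asserts this last computation is ``trivial'' without writing it out, whereas you supply the details (the simplification of $\eta$ on gradients of harmonic functions and the resulting $\e_1,\e_2,\e_3$ columns), which is a welcome elaboration rather than a different argument.
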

\begin{proof}
We want to apply Corollary~\ref{cor:zeta-complete} with $\zeta=\zeta^{(2)}$
and $\psi_{1}=x_{2}$, $\psi_{2}=x_{1}x_{2}$, $\psi_{3}=x_{3}$,
$\psi_{4}=x_{2}x_{3}$, $\psi_{5}=x_{1}$ and $\psi_{6}=x_{1}x_{3}$.
We only need to show that \eqref{eq:assumption k 0-cor} holds. Since
$w^{i}=\psi_{i}$, a trivial calculation shows that for every $x\in\overline{\Omega}$
\[
\begin{split}
 \zeta^{(2)}\bigl(\nabla w^{1},\dots,\nabla w^{6}\bigr)(x)&=\det\begin{bmatrix}\eta(\e_{2},\nabla(x_{1}x_{2})) & \eta(\e_{3},\nabla(x_{2}x_{3})) & \eta(\e_{1},\nabla(x_{1}x_{3}))\end{bmatrix} \\ &=1,
\end{split}
\]
as desired.
\end{proof}
In \cite{bal2013reconstruction}, complex geometric optics solutions
are used to make problem \eqref{eq:eq with M2} solvable. With this
approach, the six illuminations are given explicitly and do not depend
on the coefficients. However, it has to be noted that the assumption
$\left\Vert \sigma-\so\right\Vert _{W^{2,p}(\Omega;\R^{3\times3})}\le\delta$
can be restrictive (see Remark~\ref{rem:dependence}).

\subsubsection{\label{subsub:Inverse-problem-of}Inverse problem of electro-seismic
conversion}

Electro-seismic conversion is the generation of a seismic wave in
a fluid-saturated porous material when an electric field is applied
\cite{zhu-haartsen-toksov-2000}. The problem is modeled by the coupling
of Maxwell's equations and Biot's equations. We consider the hybrid
inverse problem introduced in \cite{chen-yang-2013}. In the first
step, by inverting Biot's equation, the quantity $D_{\k}^{i}=L\Ei$
is recovered in $\Omega$, where $L>0$ is a possibly varying coefficient
representing the coupling between electromagnetic and mechanic effects.
In a second step, the electromagnetic parameters $\epsilon$ and $\sigma$
have to be imaged from the knowledge of $D_{\k}^{i}$. As in the previous
cases, we assume that $\mu=1$ and that $\epsilon$ and $\sigma$
are isotropic.

Let $K\times\{\phi_{1},\dots,\phi_{6}\}$ be a set of measurements.
Since $\curl\Hi=-\ii q_{\k}\Ei$, this problem is very similar to
the one discussed in \ref{subsub:Reconstruction-of-second method},
and $L$ plays the role of $-\ii q_{\k}$. Therefore, if $K\times\{\phi_{1},\dots,\phi_{6}\}$
is $\zeta^{(2)}$-complete, it is possible to reconstruct the coefficient
$L$. Once $L$ is known, the electric field can be easily obtained
as $\Ei=L^{-1}D_{\k}^{i}.$ Finally, $\epsilon$ and $\sigma$ can
be reconstructed via
\[
\k q_{\k}E_{\k}^{1}(x)=\curl\curl E_{\k}^{1}(x),
\]
provided that $E_{\k}^{1}$ is non-vanishing. In particular, this
is true if $\left|(E_{\k}^{1})_{2}\right|(x)>0$. This is the second
condition in the definition of $\zeta^{(3)}$ in Example \ref{exa:3}.
Therefore, if $K\times\{\phi_{1},\dots,\phi_{6}\}$ is $\zeta^{(3)}$-complete
then it is possible to uniquely reconstruct $\sigma$ and $\epsilon$
if these are known at one point in $\overline{\Omega}$.

The construction of $\zeta^{(3)}$-complete sets of measurements is
analogous to the construction of $\zeta^{(2)}$-complete sets.
\begin{prop}
\label{prop:zeta3-complete}Assume that \eqref{eq:assumption_ell}
and \eqref{eq:regularity} hold with $\order=2$ and take $\so>0$.
There exists $\delta>0$ such that if $\left\Vert \sigma-\so I\right\Vert _{W^{2,p}(\Omega;\R^{3\times3})}\le\delta$
then we can choose a finite $K\subseteq\Kad$ such that 
\[
K\times\{\e_{2},\nabla(x_{1}x_{2}),\e_{3},\nabla(x_{2}x_{3}),\e_{1},\nabla(x_{1}x_{3})\}
\]
is a $\zeta^{(3)}$-complete set of measurements.
\end{prop}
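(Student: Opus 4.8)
The plan is to obtain this as a direct application of Corollary~\ref{cor:zeta-complete}, in complete parallel with the proof of Proposition~\ref{prop:zeta2-complete}. The point that makes the corollary (rather than the full Theorem~\ref{thm:zeta-complete}) applicable is that $\zeta^{(3)}$ is \emph{independent of the magnetic field}: both of its components, namely $\zeta^{(2)}$ and the scalar $(u_1)_2$, involve only the electric-field slots $u_i$ together with their first derivatives, and never the $v_i$. Since $\zeta^{(3)}$ is analytic by Example~\ref{exa:3}, the corollary applies with $\zeta=\zeta^{(3)}$, $\order=2$, the constant tensor $\so I$, and the polynomial potentials $\psi_1=x_2$, $\psi_2=x_1x_2$, $\psi_3=x_3$, $\psi_4=x_2x_3$, $\psi_5=x_1$, $\psi_6=x_1x_3$, all of which lie in $W^{\order+1,p}(\Omega;\C)=W^{3,p}(\Omega;\C)$ on the bounded domain $\Omega$.

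Next I would identify the reference fields. Because $\so$ is a positive constant, the equation $\div(\so I\,\nabla w^i)=0$ reduces to $\Delta w^i=0$, and each $\psi_i$ is itself harmonic ($\Delta x_j=0$ and $\Delta(x_jx_k)=0$ for $j\neq k$); hence by uniqueness for the Dirichlet problem $w^i=\psi_i$ in $\Omega$, so that $\nabla w^1=\e_2$, $\nabla w^2=\nabla(x_1x_2)$, $\dots$, $\nabla w^6=\nabla(x_1x_3)$ — precisely the six vector fields appearing in the statement. It then remains to verify hypothesis \eqref{eq:assumption k 0-cor} for both components of $\zeta^{(3)}$ at every $x\in\overline{\Omega}$. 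The first component is exactly the determinant already computed in the proof of Proposition~\ref{prop:zeta2-complete}, which equals $1$; the second component equals $(\nabla w^1)_2=(\e_2)_2=1$. Both are nonzero, so Corollary~\ref{cor:zeta-complete} produces the desired $\delta>0$ and finite $K\subseteq\Kad$.

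I do not expect a genuine obstacle here: the proposition is an instance of Corollary~\ref{cor:zeta-complete}, and the only steps needing a word of justification are the $H$-independence of $\zeta^{(3)}$ (which is what permits the use of the corollary) and the remark that, with $\so$ constant and the boundary data harmonic polynomials, the $\k=0$ fields are exactly the explicit gradients listed above. It is worth noting that the ordering of the illuminations fixed in Proposition~\ref{prop:zeta2-complete} is already compatible with the additional constraint coming from $\zeta^{(3)}$: since $\psi_1=x_2$, the first reference field $\nabla w^1=\e_2$ has nonvanishing second component, which matches the slot $(u_1)_2$, so the same set of measurements works without modification.
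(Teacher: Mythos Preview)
Your proposal is correct and follows exactly the approach the paper indicates: the paper does not give a separate proof but simply remarks that the construction of $\zeta^{(3)}$-complete sets is analogous to that of $\zeta^{(2)}$-complete sets, which is precisely what you have carried out via Corollary~\ref{cor:zeta-complete}. Your additional checks (the $H$-independence of $\zeta^{(3)}$ and the verification that $(\nabla w^{1})_{2}=(\e_{2})_{2}=1$) are the right details to fill in.
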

The same comment given after Proposition~\ref{prop:zeta2-complete}
is relevant here: in \cite{chen-yang-2013}, complex geometric optics
solutions are used to make this problem solvable.

\section{\label{sec:A-multi-frequency-approach}A multi-frequency approach
to the boundary control of Maxwell's equations}

In this section we prove Theorem~\ref{thm:zeta-complete}. In subsection
\ref{sub:Analytic-functions} we study some basic properties of analytic
functions in Banach spaces. In subsection \ref{sub:The-Maxwell's-equations}
we state the main results regarding well-posedness, regularity and
analyticity properties for problem \eqref{eq:main maxwell}. Finally,
in subsection \ref{sub:proof of zeta-complete}, the proof of Theorem~\ref{thm:zeta-complete}
is given.

\subsection{\label{sub:Analytic-functions}Analytic functions}

Analytic functions in a Banach space setting were studied in \cite{taylor37}.
Let $E$ and $E'$ be complex Banach spaces, $D\subseteq E$ be an
open set and take $f\colon D\to E'$. We say that $f$ admits a Gateaux
differential in $x_{0}\in D$ with respect to the direction $y\in E$
if the limit
\[
\lim_{\tau\to0}\frac{f(x_{0}+\tau y)-f(x_{0})}{\tau}
\]
exists in $E'$. We say that $f$ is analytic\emph{ }in $x_{0}$ if
it is continuous in $x_{0}$ and admits a Gateaux differential in
$x_{0}$ with respect to every direction $y\in E$. We say that $f$
is analytic in $D$ (or simply analytic) if it is analytic in every
point of $D$. With this definition, it is clear that this notion
extends the classical notion of analyticity for functions of complex
variable.

The following lemma summarizes some of the basic properties of analytic
functions that are of interest to us.
\begin{lem}
\label{lem:analytic functions}Let $E_{1},\dots,E_{r}$, $E$ and
$E'$ be complex Banach spaces. Let $D\subseteq E$ be an open set.
\begin{enumerate}
\item If $f\colon E_{1}\times\dots\times E_{r}\to E'$ is multilinear and
bounded then $f$ is analytic.
\item If $f\colon D\to E_{1}$ and $g\colon E_{1}\to E'$ are analytic then
$g\circ f\colon D\to E'$ is analytic.
\item Take $f=(f_{1},\dots f_{r})\colon D\to E_{1}\times\dots\times E_{r}$.
Then $f$ is analytic if and only if $f_{l}$ is analytic for every
$l=1,\dots,r$.
\end{enumerate}
\end{lem}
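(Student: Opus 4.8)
The plan is to check, in each of the three cases, the two defining requirements of analyticity: continuity at every point and the existence of a Gateaux differential in every direction at every point. Throughout I will invoke the fact, from Taylor's theory \cite{taylor37}, that a map between complex Banach spaces which is continuous and admits a Gateaux differential in every direction at every point of an open set is holomorphic in the classical sense, and in particular is locally bounded and Fr\'echet differentiable with a locally bounded Fr\'echet derivative; this regularity upgrade is exactly what makes the chain rule work in part (2). For part (1), let $f\colon E_{1}\times\cdots\times E_{r}\to E'$ be multilinear with $\|f(x_{1},\dots,x_{r})\|\le C\|x_{1}\|\cdots\|x_{r}\|$. Continuity at any point $(x_{0}^{1},\dots,x_{0}^{r})$ follows from a telescoping expansion of $f(x_{1},\dots,x_{r})-f(x_{0}^{1},\dots,x_{0}^{r})$ into $r$ terms, each controlled by $C$ times a product of norms times one factor $\|x_{j}-x_{0}^{j}\|$. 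For the Gateaux differential at $(x_{0}^{1},\dots,x_{0}^{r})$ in the direction $(y^{1},\dots,y^{r})$, expand $f(x_{0}^{1}+\tau y^{1},\dots,x_{0}^{r}+\tau y^{r})$ by multilinearity into $f(x_{0}^{1},\dots,x_{0}^{r})$, a term $\tau\sum_{j}f(x_{0}^{1},\dots,y^{j},\dots,x_{0}^{r})$ linear in $\tau$, and terms of order $\tau^{2}$ or higher that are $O(|\tau|^{2})$ by boundedness; dividing by $\tau$ and letting $\tau\to0$ yields the Gateaux differential $\sum_{j}f(x_{0}^{1},\dots,y^{j},\dots,x_{0}^{r})$.

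For part (2), continuity of $g\circ f$ at $x_{0}\in D$ is immediate from continuity of $f$ at $x_{0}$ and of $g$ at $f(x_{0})$. For the Gateaux differential at $x_{0}$ in the direction $y$, set $h(\tau)=f(x_{0}+\tau y)$; by hypothesis $h$ is differentiable at $\tau=0$, with $h'(0)=:v\in E_{1}$ the Gateaux differential of $f$ at $x_{0}$ in direction $y$. Since $g$ is analytic near $f(x_{0})$, it is Fr\'echet differentiable at $f(x_{0})=h(0)$ by the cited result, so $g(h(0)+z)=g(h(0))+Dg(h(0))z+R(z)$ with $\|R(z)\|=o(\|z\|)$. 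Writing $g(h(\tau))-g(h(0))=Dg(h(0))\bigl(h(\tau)-h(0)\bigr)+R\bigl(h(\tau)-h(0)\bigr)$, dividing by $\tau$, and using that $(h(\tau)-h(0))/\tau\to v$ (so $\|h(\tau)-h(0)\|/|\tau|$ stays bounded, forcing $\|R(h(\tau)-h(0))\|/|\tau|\to0$), we see the limit exists and equals $Dg(h(0))v$. Hence $g\circ f$ is continuous and everywhere Gateaux differentiable, i.e.\ analytic.

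For part (3), equip the finite product $E_{1}\times\cdots\times E_{r}$ with, say, the max norm, so that convergence in the product is coordinatewise and each coordinate projection $\pi_{l}$ is linear and bounded, hence analytic by part (1). If $f$ is analytic, then $f_{l}=\pi_{l}\circ f$ is analytic by part (2). Conversely, if each $f_{l}$ is analytic, then $f$ is continuous since continuity in a finite product is coordinatewise, and the difference quotient $(f(x_{0}+\tau y)-f(x_{0}))/\tau$ has $l$-th coordinate $(f_{l}(x_{0}+\tau y)-f_{l}(x_{0}))/\tau$, which converges as $\tau\to0$ for every $l$; therefore the difference quotient converges in the product, so $f$ admits a Gateaux differential in every direction and is analytic.

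The main obstacle is part (2): Gateaux differentiability by itself does not survive composition, since the curve $h(\tau)=f(x_{0}+\tau y)$ does not move along a fixed ray through $h(0)$, so the mere existence of directional Gateaux differentials of $g$ is not enough. The point at which the argument relies on something non-elementary is precisely the passage from ``continuous plus everywhere Gateaux differentiable'' to ``Fr\'echet differentiable with locally bounded derivative''; once that is granted from \cite{taylor37}, the remainder of (2) is the routine chain-rule estimate above, and parts (1) and (3) are direct.
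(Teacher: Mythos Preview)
Your proof is correct and in fact more complete than the paper's own treatment: the paper simply asserts that parts (1) and (3) ``trivially follow from the definition'' and disposes of part (2) by citing \cite{Whittlesey1965}, whereas you spell out the continuity and Gateaux-differentiability checks for (1) and (3) and give a self-contained chain-rule argument for (2). The only substantive input you need is the upgrade from ``continuous and everywhere Gateaux differentiable'' to ``Fr\'echet differentiable'', which you correctly attribute to the classical theory in \cite{taylor37}; this is exactly the content that the paper outsources to Whittlesey, so the approaches coincide in spirit.
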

\begin{proof}
Parts (1) and (3) trivially follow from the definition. Part (2) is
shown in \cite{Whittlesey1965}.
\end{proof}

\subsection{\label{sub:The-Maxwell's-equations}Maxwell's equations}

The proofs of the results stated in this subsection are given in Section~\ref{sec:The-Maxwell's-Equations}.

We first study well-posedness for the problem at hand. As it will
be clear from the proof of Theorem~\ref{thm:zeta-complete}, we need
to study problem \eqref{eq:main maxwell} in the more general case
of complex frequency. The case $\k\neq0$ is classical \cite{SOMERSALO-ISAACSON-CHENEY-1992},
and the problem is well-posed except for a discrete set of complex
resonances. Well-posedness in the case $\k=0$ will follow from a
standard argument involving the Helmholtz decomposition.
\begin{prop}
\label{prop:well-posedness}Assume that \eqref{eq:assumption_ell}
and \eqref{eq:assumption phi} hold for some $p>3$ and $\order=1$.
There exists a discrete set $\Sigma\subseteq\C\setminus\{0\}$ such
that for all $\k\in\C\setminus\Sigma$ the problem
\begin{equation}
\left\{ \begin{array}{l}
\curl\E=\ii\k\mu\H\qquad\text{in \ensuremath{\Omega},}\\
\curl\H=-\ii q_{\k}\E\qquad\text{in \ensuremath{\Omega},}\\
\div(\mu\H)=0\qquad\text{in \ensuremath{\Omega},}\\
\E\times\nu=\phi\times\nu\qquad\text{on \ensuremath{\partial\Omega},}\\
\mu\H\cdot\nu=0\qquad\text{on \ensuremath{\partial\Omega}.}
\end{array}\right.\label{eq:combined}
\end{equation}
admits a unique solution $(\E,\H)\in\Hcurl\times\Hmu$ and
\[
\left\Vert (\E,\H)\right\Vert _{\Hcurl^{2}}\le C\left\Vert \phi\right\Vert _{\Hcurl},
\]
for some $C>0$ independent of $\phi$. 
\end{prop}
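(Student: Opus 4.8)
The plan is to establish \eqref{eq:combined} by treating the cases $\k\neq0$ and $\k=0$ separately, and then combining them via the analytic Fredholm theorem to obtain the discrete exceptional set $\Sigma$. For $\k\neq0$, one eliminates $\H$ from the first two equations: from $\curl\E=\ii\k\mu\H$ we get $\H=(\ii\k)^{-1}\mu^{-1}\curl\E$, and substituting into $\curl\H=-\ii q_\k\E$ yields the second-order equation $\curl(\mu^{-1}\curl\E)=\k q_\k\E$ in $\Omega$ with $\E\times\nu=\phi\times\nu$ on $\bo$. This is the classical time-harmonic Maxwell system with a frequency-dependent complex refractive index $q_\k/\k=\epsilon+\ii\k^{-1}\sigma$; one writes the weak formulation in $\Hcurl$, splits off a compact perturbation using the compact embedding of $\{u\in\Hocurl:\div(q_\k u)\in L^2\}$ into $\ld$ (or rather the relevant Helmholtz-type decomposition), and checks that the principal part defines a coercive sesquilinear form after the compact part is subtracted. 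Then $\H$ is recovered by the formula above; the divergence condition $\div(\mu\H)=0$ and $\mu\H\cdot\nu=0$ hold automatically since $\mu\H=(\ii\k)^{-1}\curl\E$ and $\div\curl=0$, $\curl\E\cdot\nu$ is controlled by $\curl(\phi)\cdot\nu=0$ from \eqref{eq:assumption phi}. Here it is convenient first to reduce to homogeneous boundary data by subtracting $\phi$ and absorbing $\curl(\mu^{-1}\curl\phi)-\k q_\k\phi$ into the right-hand side.

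For the case $\k=0$ the system \eqref{eq:combined} decouples completely: the equations for $\E$ become $\curl\E=0$, $\E\times\nu=\phi\times\nu$, which together with the simple-connectedness of $\Omega$ forces $\E=\phi+\nabla\chi$ for a potential $\chi$; imposing $\div(\mu\E)=0$ (note $q_0=\ii\sigma$, but with $\k=0$ the relevant equation is $\curl\H=0$ — one must be slightly careful which divergence condition is the natural one, and indeed the formulation \eqref{eq: k 0} shows $\div(\so\E)=0$ is used, so here one solves $\div(\mu\nabla\chi)=-\div(\mu\phi)$ in $\Omega$ with $\chi=0$ on $\bo$, a standard elliptic Dirichlet problem). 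Then $\H$ solves $\curl\H=0$, $\div(\mu\H)=0$ in $\Omega$, $\mu\H\cdot\nu=0$ on $\bo$; by the simple-connectedness of both $\Omega$ and $\bo$, the space of such harmonic fields (first and second de Rham cohomology vanishing) is trivial, so $\H=0$. This shows $0\notin\Sigma$, justifying $\Sigma\subseteq\C\setminus\{0\}$.

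To get the discrete set $\Sigma$ and the uniform bound, I would set up, for $\k$ in a neighbourhood of any fixed $\k_0$, the reduced problem as $(\mathrm{Id}+K_\k)\E=f_\k$ in $\Hcurl$ (after the coercive-plus-compact splitting), where $\k\mapsto K_\k$ is an analytic family of compact operators and $\k\mapsto f_\k$ is analytic; since invertibility holds at $\k=0$ (and for $\k$ with large imaginary part, say, or at least at one point), the analytic Fredholm theorem gives a discrete set $\Sigma$ outside of which $(\mathrm{Id}+K_\k)^{-1}$ exists and depends analytically — in particular locally boundedly — on $\k$, yielding the estimate $\|(\E,\H)\|_{\Hcurl^2}\le C\|\phi\|_{\Hcurl}$ with $C$ depending on $\k$ but independent of $\phi$. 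The main obstacle is the functional-analytic setup of the compact perturbation: unlike the Helmholtz case, $\Hcurl$ does not embed compactly into $\ld$, so one cannot naively write the lower-order term as a compact operator. The standard fix is to work on the subspace where $\div(q_\k\E)$ is controlled (a Helmholtz decomposition $\Hcurl=\nabla H^1_0\oplus X$ adapted to the weight $q_\k$), on which the relevant embedding is compact by Weber/Picard-type compactness results; the gradient part is handled directly by an elliptic estimate. Getting this decomposition to depend analytically on the complex parameter $\k$ (since $q_\k$ does) is the delicate point, but since $q_\k$ is affine in $\k$ and uniformly elliptic for $\k$ away from a neighbourhood of the imaginary axis — or more robustly, by using a fixed decomposition and treating the $\k$-dependent part of the weight as an additional bounded (hence, composed with the compact embedding, compact) perturbation — this can be arranged. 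The regularity statement $(\E,\H)\in\Cl^{\order-1}(\overline\Omega;\C^6)$ claimed in the text is proved separately (in Section~\ref{sec:The-Maxwell's-Equations}, via the results of \cite{ALBERTI-CAPDEBOSCQ-2013}) and is not needed for this proposition.
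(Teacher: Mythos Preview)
Your treatment of the case $\k=0$ contains a genuine error. At $\k=0$ we have $q_0=\ii\sigma$, so the second equation of \eqref{eq:combined} reads $\curl H_0=-\ii q_0 E_0=\sigma E_0$, not $\curl H_0=0$. Consequently the divergence constraint on $E_0$ is $\div(\sigma E_0)=0$ (from $\div\curl H_0=0$), not $\div(\mu E_0)=0$; and $H_0$ is the nontrivial solution of $\curl H_0=\sigma E_0$, $\div(\mu H_0)=0$, $\mu H_0\cdot\nu=0$, not zero. This is exactly the pair of systems displayed in \eqref{eq: k  0}. Your decoupling is therefore only sequential: first an elliptic problem for $E_0$ with weight $\sigma$, then a div--curl problem for $H_0$. (Also, writing $E_0=\phi+\nabla\chi$ presupposes $\curl\phi=0$, which is not assumed; only $\curl\phi\cdot\nu=0$ on $\bo$ is.)

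On the overall strategy, the paper does not pass to the second-order equation for $\E$. It lifts the boundary datum and rewrites \eqref{eq:combined} as $(T-\k)(\EE,\H)=(\epsilon^{-1}q_\k\phi,\ii\mu^{-1}\curl\phi)$ for a first-order operator $T$ on $\mathcal{D}(T)=\Hocurl\times\Hmu$ into $X=\ld\times\{v\in\ld:\div(\mu v)=0,\ \mu v\cdot\nu=0\}$. Discreteness of the spectrum is quoted from \cite{SOMERSALO-ISAACSON-CHENEY-1992} for an extension $\tilde T$ (so $\Sigma\subseteq\sigma(\tilde T)$), and $0\notin\Sigma$ is established by constructing $T^{-1}$ explicitly via the Helmholtz decomposition (Lemma~\ref{lem:0 notin Sigma}). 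This sidesteps precisely the difficulty you flag: there is no $\k$-dependent Helmholtz decomposition adapted to $q_\k$, and no analytic Fredholm argument at the level of a second-order form; the spectral statement for $T$ already delivers both the discrete exceptional set and the bound. Your route can in principle be made to work, but the analytic dependence of the $q_\k$-weighted splitting that you identify as ``the delicate point'' is exactly what the paper's first-order formulation avoids.
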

We next state a regularity result for the solution $(\E,\H)$, which
follows from the regularity theorems in \cite{ALBERTI-CAPDEBOSCQ-2013}.
\begin{prop}
\label{prop:regularity}Assume that \eqref{eq:assumption_ell}, \eqref{eq:regularity}
and \eqref{eq:assumption phi} hold for some $p>3$ and $\order\in\N^{*}$.
For $\k\in\C\setminus\Sigma$ let $(\E,\H)\in\Hcurl$ be the unique
solution to \eqref{eq:combined}. Then $(\E,\H)\in\Cl^{\order-1}(\overline{\Omega};\C^{6})$
and
\[
\left\Vert (\E,\H)\right\Vert _{\Cl^{\order-1}(\overline{\Omega};\C^{6})}\le C\left\Vert \phi\right\Vert _{W^{\order,p}(\Omega;\C^{3})},
\]
for some $C>0$ independent of $\phi$. Moreover, if $\k=0$ there
holds 
\[
\left\Vert (E_{0},H_{0})\right\Vert _{W^{\order,p}(\Omega;\C^{6})}\le C'\left\Vert \phi\right\Vert _{W^{\order,p}(\Omega;\C^{3})},
\]
for some $C'>0$ depending on $\Omega$, $\mina$, $\maxa$, $\kappa$,
$p$ and $\left\Vert (\mu,\epsilon,\sigma)\right\Vert _{W^{\order,p}(\Omega;\R^{3\times3})^{3}}$
only.
\end{prop}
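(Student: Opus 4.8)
The plan is to deduce everything from Proposition~\ref{prop:well-posedness}, the regularity theory for first-order $\curl$--$\div$ systems with less than Lipschitz coefficients developed in \cite{ALBERTI-CAPDEBOSCQ-2013}, and the Sobolev embedding $W^{\order,p}(\Omega;\C^{n})\hookrightarrow\Cl^{\order-1}(\overline{\Omega};\C^{n})$, which holds since $p>3$ and $\Omega$ is $\Cl^{\order+1,1}$. Concretely, I would first upgrade the $\Hcurl$-solution to $W^{\order,p}(\Omega;\C^{6})$ by a bootstrap in the regularity index, and then the $\Cl^{\order-1}$-bound follows from the embedding; the sharper $W^{\order,p}$-estimate at $\k=0$ will be read off from the same argument by tracking which quantities the constants depend on.

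The first step is to note that the two equations of \eqref{eq:combined} decouple at the level of regularity, because $(\E,\H)$ also satisfies $\div(q_{\k}\E)=0$ in $\Omega$: for $\k\neq0$ this follows by taking the divergence of $\curl\H=-\ii q_{\k}\E$ (recall $\div\curl=0$), and for $\k=0$ it again follows from the same equation with $q_{0}=\ii\sigma$. Thus $\E$ solves the div--curl system $\curl\E=\ii\k\mu\H$, $\div(q_{\k}\E)=0$ in $\Omega$, $\E\times\nu=\phi\times\nu$ on $\bo$, while $\H$ solves $\curl\H=-\ii q_{\k}\E$, $\div(\mu\H)=0$ in $\Omega$, $\mu\H\cdot\nu=0$ on $\bo$; the condition $\curl\phi\cdot\nu=0$ on $\bo$ from \eqref{eq:assumption phi} is exactly what makes the tangential datum $\phi\times\nu$ compatible with the first system at the endpoint $\k=0$, where $\curl\E=0$.

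Now the induction. By Proposition~\ref{prop:well-posedness}, $(\E,\H)\in\Hcurl\times\Hmu\subseteq\ld^{2}$. For the base step, since $\mu,q_{\k}\in W^{\order,p}(\Omega;\R^{3\times3})\subseteq L^{\infty}$, the right-hand sides $\ii\k\mu\H$ and $-\ii q_{\k}\E$ lie in $\ld$, and the regularity result of \cite{ALBERTI-CAPDEBOSCQ-2013} applied to the two systems above yields $(\E,\H)\in W^{1,p}(\Omega;\C^{6})$ with an estimate in terms of $\left\Vert\phi\right\Vert_{W^{1,p}(\Omega;\C^{3})}$ (using Proposition~\ref{prop:well-posedness} and the trace theorem); this settles the case $\order=1$. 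For the inductive step, let $2\le m\le\order$ and assume $(\E,\H)\in W^{m-1,p}(\Omega;\C^{6})$. Since $W^{m-1,p}(\Omega)$ is a Banach algebra for $p>3$, $m-1\ge1$, and $\mu,q_{\k}\in W^{\order,p}\subseteq W^{m-1,p}$, the right-hand sides $\ii\k\mu\H$ and $-\ii q_{\k}\E$ belong to $W^{m-1,p}(\Omega;\C^{3})$; moreover $\div(q_{\k}\E)=\div(\mu\H)=0$, and the boundary data have traces of the order required to gain one derivative (using $\phi\in W^{\order,p}$, $\mu\in W^{\order,p}$, $m\le\order$, and the $\Cl^{\order+1,1}$-regularity of $\bo$). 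As the coefficients $q_{\k},\mu$ lie in $W^{\order,p}$ with $m\le\order$, the higher-order div--curl regularity theorem of \cite{ALBERTI-CAPDEBOSCQ-2013} applies to each system and gives $(\E,\H)\in W^{m,p}(\Omega;\C^{6})$ with the corresponding a priori estimate. Iterating up to $m=\order$ and invoking the embedding yields the asserted $\Cl^{\order-1}$-bound, the final constant $C$ being the product of the finitely many constants from the successive applications (and still depending on $\k$, through the base estimate).

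At $\k=0$ the term $\ii\k\mu\H$ vanishes and $q_{0}=\ii\sigma$, so neither $\k$ nor $\mathrm{dist}(\k,\Sigma)$ enters anywhere: the base estimate $\left\Vert(E_{0},H_{0})\right\Vert_{W^{1,p}(\Omega;\C^{6})}\le C\left\Vert\phi\right\Vert_{W^{1,p}(\Omega;\C^{3})}$ comes from the Helmholtz-decomposition argument underlying Proposition~\ref{prop:well-posedness} together with the $W^{1,p}$ div--curl estimate, with $C$ depending only on $\Omega$, $\mina$, $\maxa$, $p$ and the coefficients, and every step of the bootstrap preserves this kind of dependence; chaining the finitely many constants then gives the estimate with $C'$ depending on $\Omega$, $\mina$, $\maxa$, $\order$, $p$ and $\left\Vert(\mu,\epsilon,\sigma)\right\Vert_{W^{\order,p}(\Omega;\R^{3\times3})^{3}}$ only. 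The main obstacle is entirely absorbed into \cite{ALBERTI-CAPDEBOSCQ-2013}: it is the base step $\Hcurl\to W^{1,p}$ for div--curl systems with merely Hölder-continuous coefficients, which, because $\curl\E$ and $\curl\H$ are only known to be in $\ld$ at that stage, must first produce a Hölder estimate and then feed it back into the $\curl$ equations. Granting this and the algebra property of $W^{m,p}$ for $p>3$, the remaining iteration and the bookkeeping of constants are routine.
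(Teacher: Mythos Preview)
Your outline is correct and rests on the same two ingredients as the paper --- the regularity theory of \cite{ALBERTI-CAPDEBOSCQ-2013} and the embedding $W^{\kappa,p}\hookrightarrow C^{\kappa-1}$ --- but the route is organized differently. Rather than decouple into two div--curl systems with the inhomogeneous datum $E_\omega\times\nu=\phi\times\nu$ and then bootstrap in $m$, the paper first \emph{lifts} the boundary condition, writing $E_\omega=\tilde E_\omega+\phi$ so that $(\tilde E_\omega,H_\omega)\in\mathcal D(T)$ solves the resolvent equation $(T-\omega)(u,v)=(\epsilon^{-1}q_\omega\phi,\,\ii\mu^{-1}\curl\phi)$ with homogeneous tangential condition on $u$. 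The whole regularity step is then a single application of an auxiliary lemma (Proposition~\ref{pro:regularity lemma}), which asserts that solutions of $(T-\omega)(u,v)=(F,G)$ with $F\in W^{\kappa,p}$, $G\in W^{\kappa-1,p}$ lie in $W^{\kappa,p}$; that lemma is in turn proved by deriving a scalar second-order elliptic equation for each component $u_l,v_l$ (via \cite[Proposition~5]{ALBERTI-CAPDEBOSCQ-2013}) and invoking \cite[Theorem~9]{ALBERTI-CAPDEBOSCQ-2013} once, so the iteration you spell out is hidden inside that theorem. The $\omega=0$ estimate is then read off from the $\omega=0$ case of the same lemma combined with the continuity of $T^{-1}$. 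What the paper's packaging buys is: no explicit induction, no need to invoke div--curl regularity with inhomogeneous tangential data (which you would in any case reduce to the homogeneous case by lifting), and a clean isolation of where $\omega$ enters --- only through $\|(\tilde E_\omega,H_\omega)\|_{L^2}$ via Proposition~\ref{prop:well-posedness}. Your route is conceptually equivalent; the one place to be careful is that \cite{ALBERTI-CAPDEBOSCQ-2013} is not stated as a generic div--curl theorem, so to cite it for each of your two systems separately you would effectively have to reproduce the component-wise scalar-elliptic reduction that the paper carries out in Proposition~\ref{pro:regularity lemma}.
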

We finally state that $(\E,\H)$ depends analytically on $\k$. This
is the main result of Section~\ref{sec:The-Maxwell's-Equations},
and represents one of the main ingredients of the proof of Theorem~\ref{thm:zeta-complete}, 
\begin{prop}
\label{prop:analyticity global}Under the hypotheses of Proposition~\ref{prop:regularity}
the map
\[
S_{\phi}\colon\C\setminus\Sigma\longrightarrow\Cl^{\order-1}(\overline{\Omega};\C^{6}),\qquad\k\longmapsto(\E,\H)
\]
is analytic.
\end{prop}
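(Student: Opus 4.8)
The plan is to establish analyticity of $S_\phi$ by reducing problem \eqref{eq:combined} to an operator equation on a fixed Banach space whose coefficients depend analytically on $\k$, and then to invoke the fact that the inverse of an analytically varying family of invertible bounded operators is itself analytic. The starting point is that Proposition~\ref{prop:well-posedness} and Proposition~\ref{prop:regularity} already guarantee, for each $\k\in\C\setminus\Sigma$, the existence of a unique solution $(\E,\H)\in\Cl^{\order-1}(\overline{\Omega};\C^{6})$ depending boundedly on $\phi$; what remains is the $\k$-regularity of the solution map. Fixing a particular $\k_0\in\C\setminus\Sigma$, I would first reduce to the homogeneous boundary condition by subtracting a $\k$-independent lifting of $\phi\times\nu$ (for instance using $\phi$ itself, via \eqref{eq:assumption phi}), so that the unknown lives in a fixed closed subspace of $\Hocurl\times\Hmu$. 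Writing the resulting equation as $\mathcal{A}(\k)U = \mathcal{F}(\k)$, the operator $\mathcal{A}(\k)$ is an affine-in-$\k$ (in fact polynomial, since $q_\k = \k\epsilon+\ii\sigma$ and the curl–curl coupling introduces a factor $\k$) perturbation of a fixed Fredholm operator, and $\mathcal{F}(\k)$ likewise depends polynomially on $\k$.

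The key steps, in order: (1) set up the variational/operator formulation of \eqref{eq:combined} on a $\k$-independent Banach space, absorbing the inhomogeneous boundary datum into a polynomial-in-$\k$ source term; (2) observe that $\k\mapsto\mathcal{A}(\k)$ is analytic as a map into bounded operators — this is immediate from Lemma~\ref{lem:analytic functions}(1) since each coefficient is polynomial in $\k$ with bounded-operator-valued coefficients, and products and sums of analytic operator-valued maps are analytic; (3) invoke the standard fact (provable directly from the Neumann series, or cited from \cite{taylor37}) that if $\mathcal{A}(\k)$ is analytic and invertible on $\C\setminus\Sigma$ then $\k\mapsto\mathcal{A}(\k)^{-1}$ is analytic there — concretely, near $\k_0$ one writes $\mathcal{A}(\k)^{-1} = \bigl(I + \mathcal{A}(\k_0)^{-1}(\mathcal{A}(\k)-\mathcal{A}(\k_0))\bigr)^{-1}\mathcal{A}(\k_0)^{-1}$ and expands the first factor in a convergent Neumann series, each term being analytic by Lemma~\ref{lem:analytic functions}(1)--(2); (4) conclude that $U(\k) = \mathcal{A}(\k)^{-1}\mathcal{F}(\k)$ is analytic as a composition and product of analytic maps, so the solution $(\E,\H)$ depends analytically on $\k$ with values in the energy space; (5) finally, upgrade the target space from $\Hocurl\times\Hmu$ to $\Cl^{\order-1}(\overline{\Omega};\C^{6})$ using the regularity estimate of Proposition~\ref{prop:regularity}, which provides a bounded (hence analytic, being linear) inclusion of the solution into the stronger norm — more carefully, one checks that the difference quotients, which converge in the energy norm, also converge in the $\Cl^{\order-1}$ norm by applying the a priori estimate to the equation satisfied by the difference quotient, whose right-hand side converges.

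The main obstacle I anticipate is step (5): the regularity estimate in Proposition~\ref{prop:regularity} is stated for \emph{the} solution of \eqref{eq:combined}, so to transfer analyticity to the $\Cl^{\order-1}$ topology one must verify that the Gateaux difference quotient $\tau^{-1}\bigl((E_{\k_0+\tau},H_{\k_0+\tau})-(E_{\k_0},H_{\k_0})\bigr)$ itself solves a Maxwell-type system (with the same principal part but a right-hand side built from $(E_{\k_0+\tau},H_{\k_0+\tau})$ and the coefficients $\epsilon,\sigma$), and then apply the regularity bound uniformly in $\tau$ to deduce convergence of the difference quotients in $\Cl^{\order-1}(\overline{\Omega};\C^{6})$ and continuity of the resulting derivative. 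This requires a little care with the boundary conditions — in particular that the divergence constraint $\div(\mu\H)=0$ and the conditions \eqref{eq:assumption phi} are preserved under differentiation in $\k$ — but is otherwise routine once the energy-space analyticity of steps (1)--(4) is in hand. An alternative that sidesteps step (5) is to run the whole operator-theoretic argument directly in the $\Cl^{\order-1}$ setting using the solution operator from Proposition~\ref{prop:regularity} as the fixed reference operator $\mathcal{A}(\k_0)^{-1}$; I would likely present it this way, as it keeps the bookkeeping minimal.
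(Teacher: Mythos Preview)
Your proposal is correct and follows essentially the same strategy as the paper: lift the boundary datum, rewrite the problem as $(T-\k)U=g(\k)$ with $g$ affine in $\k$, and obtain analyticity of $\k\mapsto(T-\k)^{-1}g(\k)$ via a Neumann series expansion around a reference frequency. The paper packages your step (3) into an abstract lemma (Lemma~\ref{lem:general analyticity}) and, regarding your step (5), it takes precisely the ``alternative'' you describe at the end: rather than proving analyticity in the energy space and then upgrading, it runs the Neumann-series argument directly between the spaces $Y_{1}=\{(F,G)\in X:F\in W^{\order,p},\,G\in W^{\order-1,p}\}$ and $Y_{2}=\mathcal{D}(T)\cap W^{\order,p}(\Omega;\C^{3})^{2}$, using Proposition~\ref{pro:regularity lemma} to see that $(T-\k)^{-1}\colon Y_{1}\to Y_{2}$ is bounded, and then concludes by the Sobolev embedding $W^{\order,p}\hookrightarrow\Cl^{\order-1}$.
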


\subsection{\label{sub:proof of zeta-complete}Proof of Theorem~\ref{thm:zeta-complete}}

The rest of this section is devoted to the proof of Theorem~\ref{thm:zeta-complete}.
We first need two preliminary lemmata. 

One of the main tools of the multi-frequency approach is the study
of \eqref{eq:zeta-complete} in the case $\k=0$. The following lemma
shows that assumption \eqref{eq:assumption k 0} implies that \eqref{eq:zeta-complete}
is satisfied in $\k=0$ if the conductivity $\sigma$ is a perturbation
of $\so$, namely
\[
\sigma=\so+\pert,\qquad\left\Vert \pert\right\Vert _{W^{\order,p}(\Omega;\R^{3\times3})}\le\delta,
\]
for some sufficiently small $\delta>0$. Denote the solution to \eqref{eq:combined i}
corresponding to $\sigma=\so+\pert$ and $\k=0$ by $(E_{0}^{i}(\pert),H_{0}^{i}(\pert))$.
\begin{lem}
\label{lem:perturbation}Assume that the hypotheses of Theorem~\ref{thm:zeta-complete}
hold true and write $\sigma=\so+\pert$. There exists $\delta>0$
such that if $\left\Vert \pert\right\Vert _{W^{\order,p}(\Omega;\R^{3\times3})}\le\delta$
then\foreignlanguage{american}{
\begin{equation}
\bigl|\zeta_{l}\bigl((E_{0}^{1}(\pert),H_{0}^{1}(\pert)),\dots,(E_{0}^{b}(\pert),H_{0}^{b}(\pert))\bigr)(x)\bigr|>0,\qquad l=1,\dots,r,\, x\in\overline{\Omega}.\label{eq:intermediate}
\end{equation}
}\end{lem}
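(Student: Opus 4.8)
The plan is to show that the solution map $\pert\mapsto(E_0^i(\pert),H_0^i(\pert))$ is continuous from a neighbourhood of $0$ in $W^{\order,p}(\Omega;\R^{3\times3})$ into $\Cl^{\order-1}(\overline{\Omega};\C^{6})$, and then to use compactness of $\overline{\Omega}$ together with the analyticity (hence continuity) of $\zeta$ to propagate the strict positivity assumed at $\pert=0$ to a full neighbourhood. First I would record that, by Proposition~\ref{prop:well-posedness} applied with $\so+\pert$ in lieu of $\sigma$ and $\k=0$ (note that $q_0=\ii(\so+\pert)$ still satisfies the ellipticity condition \eqref{eq:assumption_ell} once $\delta$ is small, so $0\notin\Sigma$ for the perturbed problem and the system \eqref{eq: k  0} is well-posed), the solution $(E_0^i(\pert),H_0^i(\pert))$ exists and is unique in $\Hcurl\times\Hmu$. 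The $\k=0$ regularity estimate in Proposition~\ref{prop:regularity} then gives a bound $\|(E_0^i(\pert),H_0^i(\pert))\|_{W^{\order,p}(\Omega;\C^6)}\le C'\|\phi_i\|_{W^{\order,p}(\Omega;\C^3)}$ with $C'$ depending on $\pert$ only through $\|\so+\pert\|_{W^{\order,p}}$, hence uniformly bounded for $\|\pert\|_{W^{\order,p}}\le\delta$.

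Next I would establish the continuity of $\pert\mapsto(E_0^i(\pert),H_0^i(\pert))$ at $\pert=0$. Writing $U^i(\pert)=E_0^i(\pert)-E_0^i(0)$ and $V^i(\pert)=H_0^i(\pert)-H_0^i(0)$, subtracting the two copies of \eqref{eq: k  0} shows that $(U^i,V^i)$ solves the analogous first-order system with zero boundary data for the tangential trace of $U^i$, and with right-hand sides $-\div(\pert\, E_0^i(0))$ in the divergence equation for the electric part and $\pert\, E_0^i(0)$ in the curl equation for the magnetic part. Applying the well-posedness/energy estimate of Proposition~\ref{prop:well-posedness} for this inhomogeneous problem (or the elliptic Helmholtz-decomposition argument behind it) bounds $\|(U^i,V^i)\|_{\Hcurl^2}$ by $C\|\pert\,E_0^i(0)\|$ in a suitable norm, which tends to $0$ as $\|\pert\|\to0$ because $E_0^i(0)\in W^{\order,p}$ is fixed and multiplication by $\pert$ is continuous $W^{\order,p}\times W^{\order,p}\to W^{\order,p}$ (here $p>3$ and $\order\ge1$ make $W^{\order,p}(\Omega)$ a Banach algebra, which also handles the $\order\ge2$ cases needed in Examples~\ref{exa:2}--\ref{exa:3}). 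Feeding this smallness back into the higher regularity estimate of Proposition~\ref{prop:regularity} upgrades the convergence to $\Cl^{\order-1}(\overline{\Omega};\C^6)$: $\|(E_0^i(\pert),H_0^i(\pert))-(E_0^i(0),H_0^i(0))\|_{\Cl^{\order-1}(\overline{\Omega};\C^6)}\to0$ as $\delta\to0$.

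Finally, the conclusion follows from a standard continuity-and-compactness argument. The map $\zeta_l$ is analytic, hence continuous, from $\Cl^{\order-1}(\overline{\Omega};\C^6)^b$ to $\Cl(\overline{\Omega};\C)$; composing with the evaluation $\Cl(\overline{\Omega};\C)\to\C$ at any fixed $x$ and taking absolute values, the function $(\pert,x)\mapsto|\zeta_l((E_0^i(\pert),H_0^i(\pert)))(x)|$ is continuous on a neighbourhood of $\{0\}\times\overline{\Omega}$. By \eqref{eq:assumption k 0} it is strictly positive on the compact set $\{0\}\times\overline{\Omega}$; since $\overline{\Omega}$ is compact, $m:=\min_{x\in\overline{\Omega}}|\zeta_l((E_0^i(0),H_0^i(0)))(x)|>0$ for each of the finitely many $l$, and by the uniform $\Cl^{\order-1}$-convergence just proved, $\sup_{x\in\overline{\Omega}}\bigl||\zeta_l((E_0^i(\pert),H_0^i(\pert)))(x)|-|\zeta_l((E_0^i(0),H_0^i(0)))(x)|\bigr|<m/2$ once $\delta$ is small enough. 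This yields \eqref{eq:intermediate}. The only delicate point is the perturbation estimate for the $\k=0$ system in the second paragraph — one must invoke the right functional setting (the space $\Hmu$ and the divergence constraints) so that Proposition~\ref{prop:well-posedness} applies to the difference system; everything else is routine.
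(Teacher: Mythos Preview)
Your proposal is correct and follows essentially the same route as the paper: form the difference $(U^i,V^i)=(E_0^i(\pert)-E_0^i(0),\,H_0^i(\pert)-H_0^i(0))$, observe it solves the $\omega=0$ system with a source of size $O(\|\pert\|_{W^{\kappa,p}})$, use the $W^{\kappa,p}$ regularity estimate to get $C^{\kappa-1}$-convergence, and conclude by continuity of $\zeta$ and compactness of $\overline{\Omega}$. The only imprecision is that Proposition~\ref{prop:regularity} as stated applies to the boundary value problem with illumination $\phi$, not to the inhomogeneous difference system with zero tangential trace and nontrivial right-hand side; the paper instead invokes the more general estimate of Proposition~\ref{pro:regularity lemma} (for $(T-\omega)(u,v)=(F,G)$ with $\omega=0$) directly, which both yields the $W^{\kappa,p}$ bound in one step and avoids your intermediate $H(\mathrm{curl})$ stage---you should do the same.
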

\begin{proof}
Assume $\left\Vert \pert\right\Vert _{W^{\kappa,p}(\Omega;\R^{3\times3})}\le\eta$,
for some $\eta>0$ sufficiently small so that $\so+\pert$ always
satisfies the ellipticity condition \eqref{eq:assumption_ell}. We
use the decomposition $E_{0}^{i}(\pert)=\tilde{E}_{0}^{i}(\pert)+\phi_{i}$,
where
\[
\tilde{E}_{0}^{i}(\pert)\in\Hocurl=\{u\in\Hcurl:u\times\nu=0\text{ on }\partial\Omega\}.
\]
By construction, $(\tilde{E}_{0}^{i}(\pert),H_{0}^{i}(\pert))$ is
a solution to
\[
\left\{ \begin{array}{l}
\curl H_{0}^{i}(\pert)-(\so+\pert)\tilde{E}_{0}^{i}(\pert)=(\so+\pert)\phi_{i}\qquad\text{in \ensuremath{\Omega},}\\
\curl\tilde{E}_{0}^{i}(\pert)=-\curl\phi\qquad\text{in \ensuremath{\Omega},}
\end{array}\right.
\]
together with $\div(\mu H_{0}^{i}(\pert))=0$ in $\Omega$ and $\mu H_{0}^{i}(\pert)\cdot\nu=0$
on $\bo$. Writing $u^{i}(\pert)=E_{0}^{i}(\pert)-E_{0}^{i}(0)=\tilde{E}_{0}^{i}(\pert)-\tilde{E}_{0}^{i}(0)$
and $v^{i}(\pert)=H_{0}^{i}(\pert)-H_{0}^{i}(0)$, an easy calculation
shows that
\[
\left\{ \begin{array}{l}
\curl v^{i}(\pert)-\so u^{i}(\pert)=\pert E_{0}^{i}(\pert)\qquad\text{in \ensuremath{\Omega},}\\
\curl u^{i}(\pert)=0\qquad\text{in \ensuremath{\Omega}.}
\end{array}\right.
\]
We can now apply Proposition~\ref{pro:regularity lemma} (see Section~\ref{sec:The-Maxwell's-Equations})
to obtain that
\[
\left\Vert (u^{i}(\pert),v^{i}(\pert))\right\Vert _{W^{\order,p}(\Omega;\C^{3})^{2}}\le c_{1}\bigl\Vert\pert E_{0}^{i}(\pert)\bigr\Vert_{W^{\order,p}(\Omega;\C^{3})},
\]
for some $c_{1}>0$ independent of $\pert$ (but depending on $\eta$).
Moreover, by Proposition~\ref{prop:regularity} we have $\bigl\Vert E_{0}^{i}(\pert)\bigr\Vert_{W^{\order,p}(\Omega;\C^{3})}\le c_{2}$
for some $c_{2}$ independent of $\pert$. Thus, the Sobolev Embedding
Theorem yields
\begin{equation}
\left\Vert \left(E_{0}^{i}(\pert)-E_{0}^{i}(0),H_{0}^{i}(\pert)-H_{0}^{i}(0)\right)\right\Vert _{\Cl^{\order-1}(\overline{\Omega};\C^{6})}\le c\left\Vert \pert\right\Vert _{W^{\order,p}(\Omega;\R^{3\times3})},\label{eq:bound perturbation constant}
\end{equation}
for some $c>0$ independent of $\pert$.

Note now that \eqref{eq:assumption k 0} yields\foreignlanguage{american}{
\[
\left|\zeta_{l}\bigl((E_{0}^{1}(0),H_{0}^{1}(0)),\dots,(E_{0}^{b}(0),H_{0}^{b}(0))\bigr)(x)\right|>0,\qquad l=1,\dots,r,\, x\in\overline{\Omega}.
\]
}Therefore, in view of \eqref{eq:bound perturbation constant} and
the continuity of $\zeta$ we obtain the result.
\end{proof}
In order to prove Theorem~\ref{thm:zeta-complete}, we still need
to \emph{transfer} property \eqref{eq:intermediate} to any range
of frequencies. This is the content of Lemma~\ref{lem:transfer},
that generalizes \cite[Lemma 4.1]{alberti2013multiple} to Maxwell's
equations.
\begin{lem}
\label{lem:transfer}Assume that \eqref{eq:assumption_ell} and \eqref{eq:regularity}
hold. Let $\phi_{1},\dots,\phi_{b}$ satisfy \eqref{eq:assumption_ell}
and $\zeta$ be as in \eqref{eq:definition of zeta}. Assume that
for $\k=0$
\[
\zeta_{l}\bigl((E_{0}^{1},H_{0}^{1}),\dots,(E_{0}^{b},H_{0}^{b})\bigr)(x)\neq0,\qquad x\in\overline{\Omega},\, l=1,\dots,r.
\]
Take $\k_{n},\k\in\Kad$ with $\k_{n}\to\k$ and $\k_{n}\neq\k$.
Then there exists a finite $N\subseteq\N$ such that
\[
\sum_{n\in N}\left|\zeta_{l}\bigl((E_{\k_{n}}^{1},H_{\k_{n}}^{1}),\dots,(E_{\k_{n}}^{b},H_{\k_{n}}^{b})\bigr)(x)\right|>0,\qquad x\in\overline{\Omega},\, l=1,\dots,r.
\]
In particular, we can choose a finite $K\subseteq\Kad$ such that
\[
\sum_{\k\in K}\left|\zeta_{l}\bigl((E_{\k}^{1},H_{\k}^{1}),\dots,(E_{\k}^{b},H_{\k}^{b})\bigr)(x)\right|>0,\qquad x\in\overline{\Omega},\, l=1,\dots,r.
\]
\end{lem}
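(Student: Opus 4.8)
The plan is to exploit the analyticity of the frequency-to-solution map (Proposition~\ref{prop:analyticity global}) together with the analyticity of $\zeta$ (Lemma~\ref{lem:analytic functions}) to reduce the problem to a statement about zero sets of scalar analytic functions of one complex variable. Fix $x\in\overline{\Omega}$ and $l\in\{1,\dots,r\}$, and define
\[
g_{l,x}\colon\C\setminus\Sigma\longrightarrow\C,\qquad g_{l,x}(\k)=\zeta_{l}\bigl((E_{\k}^{1},H_{\k}^{1}),\dots,(E_{\k}^{b},H_{\k}^{b})\bigr)(x).
\]
By Proposition~\ref{prop:analyticity global} each map $\k\mapsto(E_{\k}^{i},H_{\k}^{i})$ is analytic from $\C\setminus\Sigma$ into $\Cl^{\order-1}(\overline{\Omega};\C^{6})$, and evaluation at $x$ is a bounded linear map; composing with the analytic map $\zeta_{l}$ and using parts (1)--(3) of Lemma~\ref{lem:analytic functions}, $g_{l,x}$ is analytic on $\C\setminus\Sigma$, which (since $0\notin\Sigma$ and $\Kad\subseteq(0,\infty)$) contains an open neighbourhood of $\Kad\cup\{0\}$ after possibly removing the finitely many points of $\Sigma$ that could lie near $\Kad$; in any case $g_{l,x}$ is a holomorphic function of the single complex variable $\k$ on a connected open set containing $0$ and the sequence $(\k_n)$ and its limit $\k$. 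The hypothesis at $\k=0$ says $g_{l,x}(0)\neq0$, so $g_{l,x}$ is not identically zero, hence by the identity theorem its zero set in that connected open set is discrete.

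Next I would use a compactness/normal-families argument to upgrade this pointwise statement to a uniform one over $x\in\overline{\Omega}$. Since $g_{l,x}$ is not identically zero and $\k_n\to\k$ with $\k_n\neq\k$, the zeros of $g_{l,x}$ cannot accumulate at $\k$, so there exists $N_{l,x}\in\N$ with $g_{l,x}(\k_{N_{l,x}})\neq0$. The key point is that $N_{l,x}$ can be chosen locally uniformly in $x$: the map $x\mapsto g_{l,x}$ is continuous from $\overline{\Omega}$ into the space of holomorphic functions on the relevant disc (with the topology of uniform convergence on compacta), because $\k\mapsto(E_\k^i,H_\k^i)$ takes values in $\Cl^{\order-1}(\overline{\Omega};\C^6)$ and $\zeta_l$ is continuous. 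Concretely, fix a small closed disc $\bar D\subseteq\C\setminus\Sigma$ centred at $\k$; for each $x$ the function $g_{l,x}$ has at most finitely many zeros in $\bar D$, so for $n$ large (depending on $x$) $\k_n\notin\{$zeros$\}$ and in fact $|g_{l,x}(\k_n)|$ is bounded below on a neighbourhood of $x$. By the compactness of $\overline{\Omega}$ we extract a finite subcover, and the union of the corresponding finitely many indices gives a finite set $N_l\subseteq\N$ with
\[
\sum_{n\in N_l}|g_{l,x}(\k_n)|>0\qquad\text{for all }x\in\overline{\Omega}.
\]
Taking $N=N_1\cup\dots\cup N_r$ (finite) handles all $l$ simultaneously, and setting $K=\{\k_n:n\in N\}$ gives the last displayed inequality of the statement.

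I expect the main obstacle to be making the uniformity in $x$ rigorous, i.e.\ controlling, uniformly over $\overline{\Omega}$, how close the zeros of $g_{l,x}$ can come to $\k$ and how small $|g_{l,x}(\k_n)|$ can get. Pointwise, the identity theorem is immediate; the content is that we may pass to a \emph{single} finite set $N$ independent of $x$. I would handle this either by a direct contradiction argument — if no finite $N$ worked, one could pick $x_n\to x_\ast$ with $g_{l,x_n}(\k_n)=0$, pass to the limit in the holomorphic-functions topology to get $g_{l,x_\ast}(\k)=0$ of infinite order (contradicting $g_{l,x_\ast}(0)\neq0$ via the identity theorem) — or by invoking an argument-principle/Rouch\'e-type estimate: the number of zeros of $g_{l,x}$ in $\bar D$ is $\frac{1}{2\pi i}\oint_{\partial D}\frac{g_{l,x}'}{g_{l,x}}$, which would be locally constant in $x$ if $g_{l,x}$ did not vanish on $\partial D$; a covering of $\overline{\Omega}$ by such "good" discs then bounds the total number of bad indices. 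This is exactly the Maxwell analogue of \cite[Lemma 4.1]{alberti2013multiple}, and the one new technical ingredient compared with the Helmholtz case is that the target regularity space is $\Cl^{\order-1}(\overline{\Omega};\C^6)$ rather than a scalar H\"older space, but since evaluation and $\zeta$ are continuous this changes nothing essential in the argument.
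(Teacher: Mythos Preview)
Your proposal is correct and follows essentially the same route as the paper: analyticity of $\omega\mapsto\zeta_l(\dots)(x)$ on the connected set $\C\setminus\Sigma$, isolated zeros via the identity theorem since the value at $\omega=0$ is nonzero, then continuity in $x$ plus compactness of $\overline{\Omega}$ to extract a finite set of indices. The paper streamlines the two points you labour over: it takes the single product $g_x(\omega)=\prod_{l=1}^{r}\zeta_l(\dots)(x)$ to handle all $l$ at once, and for the uniformity in $x$ it just uses that $y\mapsto g_y(\omega_{n_x})$ is a continuous function on $\overline{\Omega}$ (because $\zeta$ takes values in $C(\overline{\Omega})$), so no normal-families, contradiction, or Rouch\'e argument is needed.
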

\begin{proof}
Take $x\in\overline{\Omega}.$ Consider the map $g_{x}\colon\C\setminus\Sigma\to\C$
defined by 
\[
g_{x}(\k)=\prod_{l=1}^{r}\zeta_{l}\bigl((E_{\k}^{1},H_{\k}^{1}),\dots,(E_{\k}^{b},H_{\k}^{b})\bigr)(x),\qquad\k\in\C\setminus\Sigma.
\]
Proposition~\ref{prop:analyticity global} states that the map $\k\in\C\setminus\Sigma\mapsto(\Ei,\Hi)\in\Cl^{\order-1}(\overline{\Omega},\C^{6})$
is analytic. Hence, in view of \eqref{eq:definition of zeta}, Lemma~\ref{lem:analytic functions}
yields that $g_{x}$ is analytic. Thus, as $g_{x}(0)\neq0$, the set
$\left\{ \k'\in\C\setminus\Sigma:g_{x}(\k')=0\right\} $ has no accumulation
points in $\C\setminus\Sigma$ by the analytic continuation theorem.
Since by assumption $\k$ is an accumulation point for the sequence
$(\k_{n})$, this implies that there exists $n_{x}\in\N$ such that
$g_{x}(\k_{n_{x}})\neq0$. As a consequence, in view of the continuity
of the map $\prod_{l=1}^{r}\zeta_{l}\bigl((E_{\k_{n_{x}}}^{1},H_{\k_{n_{x}}}^{1}),\dots,(E_{\k_{n_{x}}}^{b},H_{\k_{n_{x}}}^{b})\bigr)$,
we can find $r_{x}>0$ such that 
\begin{equation}
g_{y}(\k_{n_{x}})\neq0,\qquad y\in B(x,r_{x})\cap\overline{\Omega}.\label{eq:10}
\end{equation}
Since $\overline{\Omega}=\bigcup_{x\in\overline{\Omega}}\left(B(x,r_{x})\cap\overline{\Omega}\right)$,
there exist $x_{1},\dots,x_{M}\in\overline{\Omega}$ satisfying 
\begin{equation}
\overline{\Omega}=\bigcup_{m=1}^{M}\left(B(x_{m},r_{x_{m}})\cap\overline{\Omega}\right).\label{eq:11}
\end{equation}
Defining $N=\left\{ n_{x_{m}}:m=1:,\dots,M\right\} ,$ by \eqref{eq:10}
and \eqref{eq:11} we obtain the result.
\end{proof}
We are now in a position to prove Theorem~\ref{thm:zeta-complete}
\begin{proof}[Proof of Theorem \ref{thm:zeta-complete}]
Take $\delta$ as in Lemma~\ref{lem:perturbation} and suppose $ $$\left\Vert \sigma-\so\right\Vert _{W^{\order,p}(\Omega;\R^{3\times3})}\le\delta$.
By Lemma~\ref{lem:perturbation} we have
\[
\zeta_{l}\bigl((E_{0}^{1},H_{0}^{1}),\dots,(E_{0}^{b},H_{0}^{b})\bigr)(x)\neq0,\qquad l=1,\dots,r,\, x\in\overline{\Omega}.
\]
As a result, in view of Lemma~\ref{lem:transfer}, we can choose
a finite $K\subseteq\Kad$ such that
\[
\sum_{\k\in K}\left|\zeta_{l}\bigl((E_{\k}^{1},H_{\k}^{1}),\dots,(E_{\k}^{b},H_{\k}^{b})\bigr)(x)\right|>0,\qquad x\in\overline{\Omega},\, l=1,\dots,r.
\]
Finally, as $\overline{\Omega}$ is compact and $\zeta_{l}\bigl((E_{\k}^{1},H_{\k}^{1}),\dots,(E_{\k}^{b},H_{\k}^{b})\bigr)$
are continuous maps, we obtain that $K\times\{\phi_{1},\dots,\phi_{b}\}$
is a $\zeta$-complete set of measurements for some $\p>0$.
\end{proof}

\section{\label{sec:Applications-to-Hybrid}Applications to hybrid problems}

In this section we analyze the hybrid problems introduced in Subsection~\ref{sub:Applications-to-Hybrid}
in more detail. The inverse problem for electro-seismic conversion
was fully studied in \ref{subsub:Inverse-problem-of} and no further
details will be given here.

\subsection{\label{sub:Reconstruction-of-first method}Reconstruction of \texorpdfstring{$\epsilon$}{e}
and \texorpdfstring{$\sigma$}{s} from knowledge of internal magnetic fields - first method}

Let us recall the problem discussed in \ref{subsub:Reconstruction-of-first method}.
Combining boundary measurements with an MRI scanner, we are able to
measure the magnetic fields. From the knowledge of $H$, the electromagnetic
parameters have to be imaged. Consider problem \eqref{eq:combined i}
\begin{equation}
\left\{ \begin{array}{l}
\curl\Ei=\ii\k\Hi\qquad\text{in \ensuremath{\Omega},}\\
\curl\Hi=-\ii q_{\k}\Ei\qquad\text{in \ensuremath{\Omega},}\\
\Ei\times\nu=\phi_{i}\times\nu\qquad\text{on \ensuremath{\partial\Omega}.}
\end{array}\right.\label{eq:model in applications}
\end{equation}
Recall that $q_{\k}=\k\epsilon+\ii\sigma$. We assume $\mu=1$ and
$\epsilon,\sigma>0$, namely we study the isotropic case. Given a
set of measurements $K\times\{\phi_{1},\dots,\phi_{b}\}$, we measure
$H_{\k}^{i}$ in $\Omega$ and want to reconstruct $\epsilon$ and
$\sigma$.

Two interesting issues of practical importance are not considered
in this work. First, the case where one or two components of the magnetic
fields are measured. In such a case, the rotation of the MRI scanner
is avoided. The reader is referred to \cite{seo2003reconstruction},
where a low frequency approximation is considered. Second, it is possible
to consider anisotropic coefficients, which in some cases are a better
model for human tissues \cite{bal2013reconstruction}.

We now describe the first method to reconstruct $\sigma$ and $\epsilon$.
Let $K\times\{\phi_{1},\phi_{2},\phi_{3}\}$ be a $\zeta^{(1)}$-complete
set of measurement (see Proposition~\ref{prop:det-complete}). Namely,
for every $x\in\overline{\Omega}$ there exists $\k(x)\in K$ such
that 
\[
\bigl|\det\bigl[\begin{array}{ccc}
E_{\k(x)}^{1} & E_{\k(x)}^{2} & E_{\k(x)}^{3}\end{array}\bigr](x)\bigr|\ge\p',
\]
for some $\p'>0$ independent of $x$. Thus, \eqref{eq:model in applications}
implies
\begin{multline}
\bigl|\det\bigl[\begin{array}{ccc}
\curl H_{\k(x)}^{1} & \curl H_{\k(x)}^{2} & \curl H_{\k(x)}^{3}\end{array}\bigr](x)\bigr|\\=\bigl|q_{\k(x)}^{3}\det\bigl[\begin{array}{ccc}
E_{\k(x)}^{1} & E_{\k(x)}^{2} & E_{\k(x)}^{3}\end{array}\bigr](x)\bigr|\ge\p,\label{eq:det condition}
\end{multline}
for some $\p>0$ independent of $x$. This inequality will be necessary
in the following.

We now proceed to eliminate the unknown electric field from system
\eqref{eq:model in applications}, in order to obtain an equation
with only $\epsilon$ and $\sigma$ as unknowns and the magnetic field
as a known datum. An immediate calculation shows that for any $\k\in K$
and $i=1,2,3$ there holds $\curl(\ii q_{\k}^{-1}\curl\Hi)=\ii\k\Hi$
in $\Omega$, whence
\[
\nabla q_{\k}\times\curl\Hi=q_{\k}\curl\curl\Hi-q_{\k}^{2}\k\Hi=-q_{\k}\Delta\Hi-q_{\k}^{2}\k\Hi\qquad\text{in \ensuremath{\Omega},}
\]
where the last identity is a consequence of the fact that $\Hi$ is
divergence free, since $\mu=1$. Taking now scalar product with $\e_{j}$
for $j=1,2$ we have
\[
\nabla q_{\k}\cdot\bigl(\curl\Hi\times\e_{j}\bigr)=-q_{\k}\Delta(\Hi)_{j}-q_{\k}^{2}\k(\Hi)_{j}\qquad\text{in \ensuremath{\Omega}.}
\]
We can now write these 6 equations in a more compact form. By introducing
the $3\times6$ matrix
\[
M_{\k}^{(1)}=\left[\begin{array}{ccccc}
\curl H_{\k}^{1}\times\e_{1} & \curl H_{\k}^{1}\times\e_{2} & \cdots & \curl H_{\k}^{3}\times\e_{1} & \curl H_{\k}^{3}\times\e_{2}\end{array}\right]
\]
and the six-dimensional horizontal vector
\[
v_{\k}=\left((H_{\k}^{1})_{1},(H_{\k}^{1})_{2},\dots,(H_{\k}^{3})_{1},(H_{\k}^{3})_{2}\right)
\]
we obtain
\begin{equation}
\nabla q_{\k}M_{\k}^{(1)}=-q_{\k}\Delta v_{\k}-q_{\k}^{2}\k v_{\k}\qquad\text{in \ensuremath{\Omega}.}\label{eq:almost final}
\end{equation}
We now want to right invert the matrix $M_{\k}^{(1)}$ to obtain a
well-posed first order PDE for $q_{\k}$. The following lemma gives
a sufficient condition for the matrix $M_{\k}^{(1)}$ to admit a right
inverse.
\begin{lem}
\label{lem:full rank}Let $G_{1},G_{2},G_{3}\in\C^{3}$ be linearly
independent. Then the $3\times6$ matrix
\[
\left[\begin{array}{ccccc}
G_{1}\times\e_{1} & G_{1}\times\e_{2} & \cdots & G_{3}\times\e_{1} & G_{3}\times\e_{2}\end{array}\right]
\]
has rank three.\end{lem}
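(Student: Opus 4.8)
The plan is to prove the equivalent statement that the six columns $G_i\times\e_j$ ($i=1,2,3$, $j=1,2$) span $\C^3$, i.e.\ that no nonzero $\xi\in\C^3$ annihilates all of them with respect to the standard bilinear pairing. First I would rewrite the six conditions $\xi\cdot(G_i\times\e_j)=0$ using the scalar triple product identity $\xi\cdot(G_i\times\e_j)=(\xi\times G_i)\cdot\e_j$, which holds verbatim over $\C$; they then say exactly that the first two components of $\xi\times G_i$ vanish, so that $\xi\times G_i\in\C\,\e_3$ for each $i=1,2,3$. The goal is then to contradict the linear independence of $G_1,G_2,G_3$.

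The contradiction splits into two cases according to whether $\xi_3=0$. If $\xi_3\neq0$, I would pair the relation $\xi\times G_i=\lambda_i\,\e_3$ (with $\lambda_i\in\C$) with $\xi$ itself: since $(\xi\times G_i)\cdot\xi=0$ identically, this gives $\lambda_i\xi_3=0$, hence $\lambda_i=0$ and $\xi\times G_i=0$ for every $i$; over $\C$ this forces each $G_i$ to lie on the line $\C\,\xi$, contradicting independence. If $\xi_3=0$, so $\xi=(\xi_1,\xi_2,0)$ with $(\xi_1,\xi_2)\neq(0,0)$, I would instead compute $\xi\times G_i=(\xi_2(G_i)_3,\,-\xi_1(G_i)_3,\,\ast)$ directly; the vanishing of its first two components together with $(\xi_1,\xi_2)\neq(0,0)$ forces $(G_i)_3=0$ for all $i$, so $G_1,G_2,G_3\in\sp\{\e_1,\e_2\}$, again contradicting independence. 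Either way $\xi=0$, so the columns span $\C^3$ and the matrix has rank three.

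I expect the only real subtlety to be exactly this case distinction: when $\xi_3=0$ the vector $\e_3$ lies in the bilinear orthogonal complement of $\xi$, so the clean pairing argument of the first case degenerates and must be replaced by the explicit coordinate computation of $\xi\times G_i$. I would also be mildly careful to invoke only the purely algebraic cross-product identities — namely $(\xi\times G)\cdot\xi=0$, $\xi\cdot(G\times\e_j)=(\xi\times G)\cdot\e_j$, and $\xi\times G=0\Rightarrow G\in\C\,\xi$ for $\xi\neq0$ — since these remain valid over $\C$, whereas any argument using a Hermitian inner product or norms would not.
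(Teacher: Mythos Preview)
Your proof is correct, but the paper's argument is shorter because it cycles the scalar triple product the other way. Instead of writing $\xi\cdot(G_i\times\e_j)=(\xi\times G_i)\cdot\e_j$, the paper writes $u\cdot(G_i\times\e_j)=(\e_j\times u)\cdot G_i$; since $\{G_1,G_2,G_3\}$ is a basis of $\C^3$, the vanishing of these quantities for all $i$ gives $\e_j\times u=0$ for $j=1,2$ directly, whence $u=0$. This sidesteps your case distinction on $\xi_3$ entirely: by grouping the six equations by $j$ rather than by $i$, the linear independence hypothesis is used once and for all, and there is no need to worry about isotropic vectors or to fall back on a coordinate computation. Your route works fine and your care about which cross-product identities survive over $\C$ is well placed, but the paper's choice of cyclic shift is the cleaner one here.
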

\begin{proof}
Take $u\in\C^{3}$ such that $G_{i}\times\e_{j}\cdot u=0$ for every
$i=1,2,3$ and $j=1,2$. We need to prove that $u=0$. Since $\e_{j}\times u\cdot G_{i}=0$
for all $i$ and $j$ and $\{G_{i}:i=1,2,3\}$ is a basis of $\C^{3}$
by assumption, we obtain $\e_{j}\times u=0$ for $j=1,2$, namely
$u=0$.
\end{proof}
Define now for any $\k\in K$ the set
\[
\Omega_{\k}=\{x\in\overline{\Omega}:\bigl|\det\begin{bmatrix}\curl H_{\k}^{1} & \curl H_{\k}^{2} & \curl H_{\k}^{3}\end{bmatrix}(x)\bigr|>\frac{\p}{2}\}.
\]
Since $\frac{\p}{2}<\p$, in view of \eqref{eq:det condition} we
obtain the cover
\[
\overline{\Omega}=\bigcup_{\k\in K}\Omega_{\k}.
\]
As the sets $\Omega_{\k}$ are relatively open in $\overline{\Omega}$,
they must overlap, and this will be exploited below in the reconstruction.
For any $\k\in K$ and $x\in\Omega_{\k}$, in view of Lemma~\ref{lem:full rank}
the matrix $M_{\k}^{(1)}(x)$ admits a right inverse, which with an
abuse of notation we denote by $(M_{\k}^{(1)})^{-1}(x)$. Therefore,
problem \eqref{eq:almost final} becomes
\begin{equation}
\nabla q_{\k}=-q_{\k}\Delta v_{\k}(M_{\k}^{(1)})^{-1}-q_{\k}^{2}\k v_{\k}(M_{\k}^{(1)})^{-1}\qquad\text{in }\Omega_{\k}.\label{eq:final}
\end{equation}

It is now possible to integrate this PDE and reconstruct $\epsilon$
and $\sigma$ in every $x\in\Omega$ if these are known for one value
$x_{0}\in\overline{\Omega}$. It is the nature of the multi-frequency
approach that the relevant conditions are satisfied only locally for
a fixed value of the frequency $\k\in K$. In other words, \eqref{eq:final}
is not satisfied everywhere but only in $\Omega_{\k}$. As a consequence,
it is not possible to reconstruct $\epsilon$ and $\sigma$ in $x$
after one simple integration of \eqref{eq:final}. The process is
more involved, and is similar to the algorithm described in \cite{bal2012inversediffusion}.

Suppose now that $q_{\k}(x_{0})$ is known for some $x_{0}\in\overline{\Omega}$
and take $x\in\overline{\Omega}$. Let $\Omega_{\k}^{j}$ for $j\in J_{\k}$
be the connected components of $\Omega_{\k}$, for a suitable set
$J_{\k}$. Since $\overline{\Omega}$ is compact, from the cover $\overline{\Omega}=\bigcup_{\k\in K}\bigcup_{j\in J_{\k}}\Omega_{\k}^{j}$,
we can extract a finite subcover
\[
\overline{\Omega}=\bigcup_{\k\in K}\bigcup_{j\in J'_{\k}}\Omega_{\k}^{j},
\]
where $J'_{\k}\subseteq J_{\k}$ is finite. Hence, as $\overline{\Omega}$
is connected and $\Omega_{\k}^{j}$ are relatively open in $\overline{\Omega}$
and connected, we can find a smooth path $\gamma\colon[0,1]\to\overline{\Omega}$
such that $\gamma(0)=x_{0}$, $\gamma(1)=x$ and
\[
\gamma([0,1])=\bigcup_{m=0}^{M-1}\gamma([t_{m},t_{m+1}]),
\]
for some $M\in\N^{*}$, where $t_{0}=0$, $t_{M}=1$ and $\gamma([t_{m},t_{m+1}])\subseteq\Omega_{\k_{m}}^{j_{m}}$
for some $\k_{m}\in K$ and $j_{m}\in J'_{\k_{m}}$. Starting with
$m=0$, we integrate \eqref{eq:final} with $\k=\k_{m}$ along $\gamma([t_{m},t_{m+1}])$
and obtain $q_{\k_{m}}$ in $\gamma(t_{m+1})$. Thus, we can reconstruct
$\sigma$ and $\epsilon$ in $\gamma(t_{m+1})$ and so $q_{\k_{m+1}}(\gamma(t_{m+1}))$.
Repeating this process $M-1$ times we obtain $\epsilon(x)$ and $\sigma(x)$,
as desired.

\subsection{\label{sub:Reconstruction-of-second method}Reconstruction of \texorpdfstring{$\epsilon$}{e}
and \texorpdfstring{$\sigma$}{s} from knowledge of internal magnetic fields - second
method}

We consider here the problem of Subsection~\ref{sub:Reconstruction-of-first method}
and give details of the reconstruction algorithm summarized in~\ref{subsub:Reconstruction-of-second method}.

Let $K\times\{\phi_{1},\dots,\phi_{6}\}$ be a $\zeta^{(2)}$-complete
set of measurement (see Proposition~\ref{prop:zeta2-complete}).
Namely, for every $x\in\overline{\Omega}$ there exists $\k(x)\in K$
such that 
\begin{equation}
\bigl|\det\bigl[\begin{array}{ccc}
\eta(E_{\k(x)}^{1},E_{\k(x)}^{2}) & \eta(E_{\k(x)}^{3},E_{\k(x)}^{4}) & \eta(E_{\k(x)}^{5},E_{\k(x)}^{6})\end{array}\bigr](x)\bigr|\ge\p,\label{eq:second method-0}
\end{equation}
for some $\p>0$ independent of $x$, where $\eta$ is given by \eqref{eq:eta}. 

In view of \eqref{eq:model in applications}, for $\k\in K$ there
holds
\[
\curl\curl E_{\k}^{1}\cdot E_{\k}^{2}-\curl\curl E_{\k}^{2}\cdot E_{\k}^{1}=0\qquad\text{in }\Omega.
\]
An easy calculation shows that substituting $\Ei=\ii q_{\k}^{-1}\curl\Hi$
in this identity we obtain
\begin{equation}
\nabla q_{\k}\cdot\eta(\curl H_{\k}^{1},\curl H_{\k}^{2})=q_{\k}\,\gamma(\curl H_{\k}^{1},\curl H_{\k}^{2})\qquad\text{in }\Omega,\label{eq:second method}
\end{equation}
where $\gamma\colon\Cl^{1}(\overline{\Omega};\C^{3})^{2}\to\Cl(\overline{\Omega};\C)$
is defined by
\[
\gamma(u_{1},u_{2})=(\nabla\div u_{1})\cdot u_{2}-(\nabla\div u_{2})\cdot u_{1}-\Delta u_{1}\cdot u_{2}+\Delta u_{2}\cdot u_{1}.
\]
Repeating the same argument with the other illuminations and combining
the resulting equations we have
\begin{equation}
\nabla q_{\k}M_{\k}^{(2)}=q_{\k}(\gamma(\curl H_{\k}^{1},\curl H_{\k}^{2}),\gamma(\curl H_{\k}^{3},\curl H_{\k}^{4}),\gamma(\curl H_{\k}^{5},\curl H_{\k}^{6}))\qquad\text{in \ensuremath{\Omega},}\label{eq:second method-1}
\end{equation}
where $M_{\k}^{(2)}$ is the $3\times3$ matrix-valued function given
by
\[
M_{\k}^{(2)}=\left[\begin{array}{ccc}
\eta(\curl H_{\k}^{1},\curl H_{\k}^{2}) & \eta(\curl H_{\k}^{3},\curl H_{\k}^{4}) & \eta(\curl H_{\k}^{5},\curl H_{\k}^{6})\end{array}\right]
\]
By definition of $\eta$ and since $\curl\Hi=-\ii q_{\k}\Ei$ we have
$\eta(\curl H_{\k}^{i},\curl H_{\k}^{j})=-q_{\k}^{2}\eta(\Ei,E_{\k}^{j})$,
whence
\[
M_{\k}^{(2)}=-q_{\k}^{2}\left[\begin{array}{ccc}
\eta(E_{\k}^{1},E_{\k}^{2}) & \eta(E_{\k}^{3},E_{\k}^{4}) & \eta(E_{\k}^{5},E_{\k}^{6})\end{array}\right].
\]
Therefore, inverting $M_{\k}^{(2)}$ in \eqref{eq:second method-1}
we obtain 

\[
\nabla q_{\k}=q_{\k}(\gamma(\curl H_{\k}^{1},\curl H_{\k}^{2}),\dots,\gamma(\curl H_{\k}^{5},\curl H_{\k}^{6}))(M_{\k}^{(2)})^{-1}\qquad\text{in }\Omega_{\k},
\]
where
\[
\Omega_{\k}=\{x\in\overline{\Omega}:\bigl|\det\left[\begin{array}{ccc}
\eta(E_{\k}^{1},E_{\k}^{2}) & \eta(E_{\k}^{3},E_{\k}^{4}) & \eta(E_{\k}^{5},E_{\k}^{6})\end{array}\right](x)\bigr|>\frac{\p}{2}\}.
\]
In view of \eqref{eq:second method-0} we have $\overline{\Omega}=\bigcup_{\k\in K}\Omega_{\k}$,
and so $q_{\k}$ can be reconstructed everywhere in $\Omega$ following
the algorithm discussed in the previous subsection, provided that
$q_{\k}$ is known at one point in $\overline{\Omega}$.

\section{\label{sec:The-Maxwell's-Equations}Maxwell's equations}

In this section we prove the results stated in Subsection~\ref{sub:The-Maxwell's-equations}.
Propositions \ref{prop:well-posedness} and \ref{prop:regularity}
are standard results, and their proofs will be detailed for completeness.
On the other hand, Proposition \ref{prop:analyticity global} is less
standard and requires the careful analysis of Maxwell's equations
given in this section.

Let us recall problem \eqref{eq:main maxwell}\begin{subequations}
\label{eq:main maxwell-bis}
\begin{empheq}[left=\empheqlbrace]{align}
&\curl\E=\ii\k\mu\H\qquad\text{in \ensuremath{\Omega},}\label{eq:main maxwell 1}\\ 
&\curl\H=-\ii q_\k\E\qquad\text{in \ensuremath{\Omega},}\label{eq:main maxwell 2}\\
&\E\times\nu=\phi\times\nu\qquad\text{on \ensuremath{\partial\Omega}.} \label{eq:main maxwell 3}
\end{empheq}
\end{subequations}We now justify the introduction of the additional constraint $\H\in\Hmu$
in order to make \eqref{eq:main maxwell-bis} well-posed in the case
$\k=0$. For any $p\in\Hone$ the quantity $(0,\nabla p)$ is a solution
to the homogeneous equation. From \eqref{eq:main maxwell 1} it follows
that $\k\div(\mu\H)=0$ in $\Omega$. Thus, it is natural to impose
in general that
\[
\div(\mu\H)=0\qquad\text{in \ensuremath{\Omega},}
\]
which is meaningful also when $\k=0$. Taking in the above example
$p\in\Hone\setminus\{0\}$ such that $\div(\mu\nabla p)=0$ in $\Omega$,
we have that $(0,\nabla p)$ is still a solution to the homogeneous
equation. This shows that the above condition is not sufficient to
guarantee well-posedness. In this example, we would need a boundary
condition on $p$. In view of \cite[(3.52)]{MONK-2003} we have
\begin{equation}
\curl w\cdot\nu=\div_{\partial\Omega}(w\times\nu),\qquad w\in\Hcurl,\label{eq:monk 3.52}
\end{equation}
whence from \eqref{eq:main maxwell 1} we obtain
\[
\ii\k\mu\H\cdot\nu=\curl\E\cdot\nu=\div_{\bo}(\E\times\nu)=\div_{\bo}(\phi\times\nu)=\curl\phi\cdot\nu\qquad\text{on \ensuremath{\bo}.}
\]
This suggests to assume \eqref{eq:assumption phi} and impose $\mu\H\cdot\nu=0$
on $\bo$. This argument justifies assumption \eqref{eq:assumption phi}
on $\phi$ and the constraint $\H\in\Hmu$.

To simplify our study of \eqref{eq:main maxwell-bis}, we first do
a lifting of the boundary condition $\phi$. Namely, write
\begin{equation}
\E=\EE+\phi,\label{eq:lift}
\end{equation}
where $\EE\in\Hocurl$, and obtain
\begin{equation}
\left\{ \begin{array}{l}
\curl\EE=\ii\k\mu\H-\curl\phi\qquad\text{in \ensuremath{\Omega},}\\
\curl\H=-\ii q_{\k}\EE-\ii q_{\k}\phi\qquad\text{in \ensuremath{\Omega}.}
\end{array}\right.\label{eq:lift maxwell}
\end{equation}

\subsection{Well-posedness}

This subsection is devoted to the proof of Proposition~\ref{prop:well-posedness}.
As far as the case $\k\neq0$ is concerned, we follow \cite{SOMERSALO-ISAACSON-CHENEY-1992}.
The case $\k=0$ is still standard, but requires additional care.

Introduce the space
\[
X=\ld\times\{v\in\ld:\div(\mu v)=0\text{ in }\Omega,\;\mu v\cdot\nu=0\text{ on }\partial\Omega\},
\]
equipped with the norm $\left\Vert (u,v)\right\Vert _{X}^{2}=\left\Vert u\right\Vert _{\ld}^{2}+\left\Vert v\right\Vert _{\ld}^{2}$.
Consider its subspace $\mathcal{D}(T)=\Hocurl\times\Hmu$ and the
operator
\[
T\colon\mathcal{D}(T)\longrightarrow X,\qquad T(u,v)=\ii\left(\epsilon^{-1}(\curl v-\sigma u),-\mu^{-1}\curl u\right)\!.
\]
In view of \eqref{eq:monk 3.52} we have $\curl u\cdot\nu=0$ for
all $u\in\Hocurl$. Therefore $T(u,v)\in X$ for all $(u,v)\in\mathcal{D}(T)$,
and so $T$ is well-defined.

The following lemma states that \eqref{eq:lift maxwell} can be recast
as a Fredholm-type equation involving the operator $T$.
\begin{lem}
\label{lem:equivalence}Assume that \eqref{eq:assumption_ell} and
\eqref{eq:assumption phi} hold and take $\k\in\C$ and $(\EE,\H)\in\mathcal{D}(T)$.
Then $(\EE,\H)$ is solution to \eqref{eq:lift maxwell} if and only
if 
\begin{equation}
(T-\k)(\EE,\H)=(\epsilon^{-1}q_{\k}\phi,\ii\mu^{-1}\curl\phi).\label{eq:T-omega}
\end{equation}
\end{lem}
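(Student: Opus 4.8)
The plan is to observe that \eqref{eq:T-omega} is nothing more than the pair \eqref{eq:lift maxwell} written compactly, the passage between the two forms amounting only to multiplying by the pointwise invertible, bounded tensors $\epsilon$ and $\mu$. So the argument will be a short direct computation together with one sanity check on the function spaces.

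First I would check that the datum on the right-hand side of \eqref{eq:T-omega} really belongs to $X$, so that the equation is meaningful. The first slot $\epsilon^{-1}q_{\k}\phi$ lies in $\ld$ since $\phi\in\ld$ and $\epsilon,\sigma\in L^{\infty}$. For the second slot, writing $v=\ii\mu^{-1}\curl\phi$ one has $\mu v=\ii\curl\phi\in\ld$, hence $\div(\mu v)=\ii\div\curl\phi=0$ in $\Omega$ and $\mu v\cdot\nu=\ii\,\curl\phi\cdot\nu=0$ on $\bo$ by the second condition in \eqref{eq:assumption phi}; thus $(\epsilon^{-1}q_{\k}\phi,\ii\mu^{-1}\curl\phi)\in X$. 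This is precisely where assumption \eqref{eq:assumption phi} is used.

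Next I would expand, for $(\EE,\H)\in\mathcal{D}(T)$,
\[
(T-\k)(\EE,\H)=\bigl(\ii\epsilon^{-1}(\curl\H-\sigma\EE)-\k\EE,\ -\ii\mu^{-1}\curl\EE-\k\H\bigr),
\]
and match componentwise with $(\epsilon^{-1}q_{\k}\phi,\ii\mu^{-1}\curl\phi)$. Equating second components and multiplying by $\mu$ turns $-\ii\mu^{-1}\curl\EE-\k\H=\ii\mu^{-1}\curl\phi$ into $\curl\EE=\ii\k\mu\H-\curl\phi$, the first line of \eqref{eq:lift maxwell}. Equating first components and multiplying by $\epsilon$ turns $\ii\epsilon^{-1}(\curl\H-\sigma\EE)-\k\EE=\epsilon^{-1}q_{\k}\phi$ into $\ii\curl\H-(\k\epsilon+\ii\sigma)\EE=q_{\k}\phi$, i.e.\ $\ii\curl\H=q_{\k}(\EE+\phi)$ since $q_{\k}=\k\epsilon+\ii\sigma$, which is the second line of \eqref{eq:lift maxwell}. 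Because $\epsilon$ and $\mu$ are invertible pointwise with bounded inverse, each manipulation is an equivalence, so both implications follow at once.

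I do not expect any genuine obstacle: the computation is routine bookkeeping, the only point deserving care being the membership of the data in $X$, which is exactly what links the statement to the boundary constraint $\curl\phi\cdot\nu=0$ in \eqref{eq:assumption phi}.
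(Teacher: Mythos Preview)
Your proposal is correct and is exactly the routine computation the paper has in mind; indeed the paper's own proof simply says the equivalence ``is just a matter of writing down the relevant identities'' and leaves the details to the reader. Your additional check that $(\epsilon^{-1}q_{\k}\phi,\ii\mu^{-1}\curl\phi)\in X$ is a worthwhile clarification, since it makes explicit why the boundary condition $\curl\phi\cdot\nu=0$ from \eqref{eq:assumption phi} is needed.
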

\begin{proof}
Showing this equivalence is just a matter of writing down the relevant
identities, and the details are left to the reader.
\end{proof}
The previous lemma states the equivalence between our original problem
\eqref{eq:lift maxwell} and the Fredholm-type equation \eqref{eq:T-omega}.
The first natural step towards the study of the latter is the characterization
of the spectrum of $T$, which we will denote by $\Sigma=\sigma(T)$.

The spectrum of an extension $\tilde{T}$ of $T$ was studied in \cite{SOMERSALO-ISAACSON-CHENEY-1992}.
Consider the space $\tilde{X}=\ld\times\ld$ equipped with the norm
$\left\Vert (u,v)\right\Vert _{\tilde{X}}^{2}=\left\Vert u\right\Vert _{\ld}^{2}+\left\Vert v\right\Vert _{\ld}^{2}$,
its subspace $\mathcal{D}(\tilde{T})=\Hocurl\times\Hcurl$ and the
operator
\[
\tilde{T}\colon\mathcal{D}(\tilde{T})\longrightarrow\tilde{X},\qquad\tilde{T}(u,v)=\ii\left(\epsilon^{-1}(\curl v-\sigma u),-\mu^{-1}\curl u\right)\!.
\]

\begin{lem}[{\cite[Proposition 3.1]{SOMERSALO-ISAACSON-CHENEY-1992}}]
\label{lem:spectrum of T tilde}Assume that \eqref{eq:assumption_ell}
holds. The spectrum of $\tilde{T}$ is discrete.
\end{lem}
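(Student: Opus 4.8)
The plan is to decompose the Maxwell operator $\tilde{T}$ into a self-adjoint part plus a bounded perturbation, to read off the spectrum of the self-adjoint part from a Hodge (weighted Helmholtz) decomposition of $\tilde{X}$, and then to control the perturbation by a second Helmholtz decomposition together with the analytic Fredholm theorem.

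First, I would write $\tilde{T} = \tilde{T}_{0} + B$ with the same domain $\mathcal{D}(\tilde{T}) = \Hocurl\times\Hcurl$, where $\tilde{T}_{0}(u,v) = \ii(\epsilon^{-1}\curl v, -\mu^{-1}\curl u)$ is the lossless ($\sigma = 0$) operator and $B(u,v) = (-\ii\epsilon^{-1}\sigma u, 0)$ is bounded on $\tilde{X}$ because $\sigma\in L^{\infty}$. Equipping $\tilde{X}$ with the equivalent inner product $\langle(u_{1},v_{1}),(u_{2},v_{2})\rangle = \int_{\Omega}\epsilon u_{1}\cdot\overline{u_{2}} + \int_{\Omega}\mu v_{1}\cdot\overline{v_{2}}$, an integration by parts — with no boundary contributions, since $u\times\nu = 0$ on $\bo$ for $u\in\Hocurl$ — shows that $\tilde{T}_{0}$ is symmetric, and the standard argument (surjectivity of $\tilde{T}_{0}\pm\ii$) upgrades this to self-adjointness. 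I would then split $\tilde{X}$ as the orthogonal sum of $\mathcal{G} = \nabla\V\times\nabla\Hone$ and $\mathcal{S} = \{(u,v)\in\tilde{X}:\div(\epsilon u) = 0,\ \div(\mu v) = 0\text{ in }\Omega,\ \mu v\cdot\nu = 0\text{ on }\bo\}$. Both subspaces reduce $\tilde{T}_{0}$: on $\mathcal{G}$ one has $\tilde{T}_{0} = 0$, while on $\mathcal{S}$ the Maxwell compactness property — the compact embeddings $\{u\in\Hocurl:\div(\epsilon u) = 0\}\hookrightarrow\ld$ and $\Hmu\hookrightarrow\ld$, valid under \eqref{eq:assumption_ell} for a domain as regular as ours, see \cite{ALBERTI-CAPDEBOSCQ-2013} — makes $\mathcal{D}(\tilde{T}_{0})\cap\mathcal{S}$, with the graph norm, embed compactly into $\mathcal{S}$, so $\tilde{T}_{0}|_{\mathcal{S}}$ has compact resolvent. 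Hence the spectrum of $\tilde{T}_{0}$ is $\{0\}$ together with a sequence of real eigenvalues of finite multiplicity, in particular a discrete subset of $\R$.

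Next, I would restore $B$. Since $\tilde{T}_{0}$ is self-adjoint one has $\|(\tilde{T}_{0}-\k)^{-1}\|\le|\mathrm{Im}\,\k|^{-1}$, so $\tilde{T}-\k = (\tilde{T}_{0}-\k)\bigl(I + (\tilde{T}_{0}-\k)^{-1}B\bigr)$ is boundedly invertible whenever $|\mathrm{Im}\,\k| > \|B\|$; thus the resolvent set of $\tilde{T}$ is nonempty. To obtain discreteness of the rest of the spectrum I would introduce a Helmholtz decomposition of the electric component adapted to the weight $\sigma$, writing $u = \nabla p + u_{0}$ with $p\in\V$ and $\div(\sigma u_{0}) = 0$: the gradient part $\nabla p$ is then recovered from the remaining data by an elliptic solve depending holomorphically on $\k$ (away from a discrete set of $\k$ where this solve degenerates), and the reduced equation is posed on a solenoidal subspace that again enjoys the Maxwell compactness property. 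In this way $\tilde{T}-\k$ is conjugate to an operator $I + K(\k)$ with $K(\k)$ compact and holomorphic in $\k$, and, the resolvent set being nonempty, the analytic Fredholm theorem forces the set of $\k$ for which $I + K(\k)$ fails to be invertible — hence the spectrum of $\tilde{T}$ — to be discrete.

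The step I expect to be the main obstacle is exactly this last reduction. The operator $\tilde{T}$ is neither self-adjoint — the conductivity enters it through $B$ — nor does it have compact resolvent, because the curl-free electric fields span an infinite-dimensional subspace on which the top-order part of $\tilde{T}$ degenerates; so neither self-adjoint spectral theory nor a bare compactness argument closes the proof, and one must first peel off this gradient part by a weight-adapted Helmholtz decomposition before the analytic Fredholm machinery becomes available. This is precisely the extra work that a nonzero conductivity — and, once $\k$ is treated as a true parameter, the frequency-dependent refractive index $q_{\k} = \k\epsilon + \ii\sigma$ — demands, and is what makes this well-posedness theory less classical than its counterpart for the scalar Helmholtz equation.
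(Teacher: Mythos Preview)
The paper does not supply its own proof of this lemma: it is quoted verbatim from \cite[Proposition~3.1]{SOMERSALO-ISAACSON-CHENEY-1992} and used as a black box. So there is nothing in the paper to compare your argument against.

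That said, your outline is essentially the argument of \cite{SOMERSALO-ISAACSON-CHENEY-1992}: split off the bounded conductivity term to obtain a self-adjoint lossless Maxwell operator $\tilde{T}_{0}$, use a weighted Helmholtz decomposition together with the Maxwell compactness embeddings to see that $\tilde{T}_{0}$ has spectrum $\{0\}\cup(\text{discrete real eigenvalues})$, observe that the resolvent set of $\tilde{T}$ is nonempty because $\|(\tilde{T}_{0}-\k)^{-1}B\|<1$ for $|\mathrm{Im}\,\k|$ large, and then invoke analytic Fredholm theory after a second Helmholtz decomposition (now with weight $\sigma$, or equivalently $q_{\k}$) to peel off the infinite-dimensional gradient kernel. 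Your identification of the last step as the delicate one is accurate: it is exactly the place where one must show that the reduced problem on the solenoidal subspace is of the form $I+K(\k)$ with $K(\k)$ compact and holomorphic, and you have sketched but not carried out this reduction. If you want to make the argument self-contained, that is where the work lies; otherwise, citing \cite{SOMERSALO-ISAACSON-CHENEY-1992} as the paper does is entirely appropriate.
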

As it has already been pointed out,  $\tilde{T}(0,\nabla p)=0$
for every $p\in\Hone$, namely $\tilde{T}$ is not injective. Therefore
$0\in\sigma(\tilde{T})$. The restriction we set in this work to the
domain and the codomain of the operator $T$ are motivated by the
need of studying \eqref{eq:lift maxwell}, whence \eqref{eq:T-omega},
also in the case $\k=0$. Thus, we shall now prove that $0\notin\Sigma$.
\begin{lem}
\label{lem:0 notin Sigma}Assume that \eqref{eq:assumption_ell} holds.
The operator $T$ is invertible and $T^{-1}\colon X\to\mathcal{D}(T)$
is continuous, namely
\[
\bigl\Vert(u,v)\bigr\Vert_{X}\le C\left\Vert T(u,v)\right\Vert _{X},\qquad(u,v)\in X,
\]
for some $C>0$ depending on $\Omega$, $\mina$ and $\maxa$ only.\end{lem}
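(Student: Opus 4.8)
The plan is to show that $T\colon\mathcal D(T)\to X$ is a bijection with bounded inverse; the stated inequality is then just a restatement of the boundedness of $T^{-1}$. Fix $(f,g)\in X$. Unwinding the definition of $T$, the equation $T(u,v)=(f,g)$ is equivalent to the system
\[
\curl u=\ii\mu g \ \text{ in }\Omega,\qquad \curl v=\sigma u-\ii\epsilon f \ \text{ in }\Omega,
\]
subject to the constraints $u\in\Hocurl$ and $v\in\Hmu$. I would solve this by two Helmholtz decompositions performed in sequence: first determine $u$, fixing its gradient part so that the datum $\sigma u-\ii\epsilon f$ of the second equation is divergence free; then determine $v$, fixing its gradient part so that the two constraints defining $\Hmu$ hold.

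For the first equation, note that $(f,g)\in X$ means precisely $\div(\mu g)=0$ in $\Omega$ and $\mu g\cdot\nu=0$ on $\bo$, so that $\ii\mu g$ is divergence free with vanishing normal trace. Since $\Omega$ is simply connected with connected boundary, a classical vector potential construction (see e.g.\ \cite{MONK-2003}) furnishes $\tilde u\in\Hocurl$ with $\curl\tilde u=\ii\mu g$ and $\|\tilde u\|_{\ld}\le C\|g\|_{\ld}$. Because $\sigma$ is elliptic by \eqref{eq:assumption_ell}, Lax--Milgram produces the unique $p\in\V$ solving $\div(\sigma\nabla p)=\ii\,\div(\epsilon f)-\div(\sigma\tilde u)$ in $H^{-1}(\Omega;\C)$, with $\|\nabla p\|_{\ld}\le C(\|f\|_{\ld}+\|\tilde u\|_{\ld})$; then $u:=\tilde u+\nabla p\in\Hocurl$ satisfies $\curl u=\ii\mu g$ and $\div(\sigma u-\ii\epsilon f)=0$ in $\Omega$. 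Setting $h:=\sigma u-\ii\epsilon f\in\ld$, another classical vector potential result (using only $\div h=0$ and the simple connectedness of $\Omega$) gives $\tilde v\in\Hcurl$ with $\curl\tilde v=h$ and $\|\tilde v\|_{\ld}\le C\|h\|_{\ld}$. Finally, Lax--Milgram applied to the Neumann-type form $(\varphi,\psi)\mapsto\int_\Omega\mu\nabla\varphi\cdot\nabla\overline\psi$ on $\Hone/\C$ (coercive by \eqref{eq:assumption_ell}) yields $q\in\Hone$ with $\int_\Omega\mu\nabla q\cdot\nabla\overline\psi=-\int_\Omega\mu\tilde v\cdot\nabla\overline\psi$ for all $\psi\in\Hone$, so $v:=\tilde v+\nabla q$ satisfies $\curl v=h$, $\div(\mu v)=0$ in $\Omega$ and $\mu v\cdot\nu=0$ on $\bo$, i.e.\ $v\in\Hmu$. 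Hence $(u,v)\in\mathcal D(T)$ and $T(u,v)=(f,g)$, and chaining the above bounds gives $\|(u,v)\|_X\le C\bigl(\|f\|_{\ld}+\|g\|_{\ld}\bigr)\le C\|(f,g)\|_X$ with $C$ depending only on $\Omega$, $\mina$, $\maxa$.

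For uniqueness, suppose $T(u,v)=0$, i.e.\ $\curl u=0$ and $\curl v=\sigma u$. Pairing the second identity with $u$ in $\ld$ and integrating by parts (the boundary term vanishes since $u$ has zero tangential trace, and $\langle v,\curl u\rangle_{\ld}=0$) yields $\langle\sigma u,u\rangle_{\ld}=0$, hence $u=0$ by \eqref{eq:assumption_ell}. Then $\curl v=0$ with $v\in\Hmu\subseteq\Hcurl$ forces $v=\nabla q$ for some $q\in\Hone$ by the simple connectedness of $\Omega$, and testing the relations $\div(\mu v)=0$, $\mu v\cdot\nu=0$ against $q$ gives $\int_\Omega\mu\nabla q\cdot\nabla\overline q=0$, so $v=0$. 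Together with the existence step this shows $T$ is invertible with $\|T^{-1}\|\le C(\Omega,\mina,\maxa)$, as claimed.

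The delicate point is not any single estimate but the orchestration of the two decompositions: the gradient part of $u$ must be chosen \emph{before} solving the $v$-equation, precisely so that $\sigma u-\ii\epsilon f$ is divergence free (without which the $v$-equation has no solution), and the gradient part of $v$ must then be obtained from the \emph{inhomogeneous} Neumann problem for $\div(\mu\nabla\,\cdot\,)$ so that $v$ lands in $\Hmu$ rather than merely in $\Hcurl$. The topological hypotheses on $\Omega$ and $\bo$ enter exactly through the two vector potential constructions (and, implicitly, through the identity \eqref{eq:monk 3.52} used to place $\curl\tilde u$ in the right space).
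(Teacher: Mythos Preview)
Your proof is correct and follows the same overall strategy as the paper: solve the $\curl$-equation for $u$ first, adjust by a gradient so that $\sigma u-\ii\epsilon f$ becomes divergence free, then solve the $\curl$-equation for $v$ and adjust by a gradient to land in $\Hmu$. The execution differs in two places. For $u$, the paper writes $u=\nabla p+\curl\Phi$ with $\Phi\in H^1$, $\div\Phi=0$, $\Phi\times\nu=0$, determines $\Phi$ from $-\Delta\Phi=\ii\mu G$, and then performs a short surface--calculus argument (using $\curl_{\partial\Omega}$ and the simple connectedness of $\partial\Omega$) to produce an inhomogeneous Dirichlet datum $r$ for $p$; you instead quote a vector potential result giving $\tilde u\in\Hocurl$ directly and take $p\in H_0^1$, which is cleaner but hides exactly the computation the paper carries out. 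For $v$, the paper invokes \cite[Chapter~I, Theorem~3.5]{GIRAULT-RAVIART-1986} in one stroke, while you unfold this into a vector potential plus a Neumann projection; these are equivalent. Finally, the paper obtains uniqueness from the uniqueness in each step of the construction, whereas you supply a separate energy argument; both are fine.
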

\begin{proof}
Let $(F,G)\in X$. We need to show that there exists a unique $(u,v)\in\mathcal{D}(T)$
such that $T(u,v)=(F,G)$ and that $\bigl\Vert(u,v)\bigr\Vert_{X}\le C\left\Vert (F,G)\right\Vert _{X}$,
for some $C>0$ depending on $\Omega$, $\mina$ and $\maxa$ only.
In the following we shall denote different such constants with the
same letter $c$.

Let us rewrite $ $$T(u,v)=(F,G)$ as
\begin{equation}
\left\{ \begin{array}{l}
\sigma u-\curl v=\ii\epsilon F\qquad\text{in \ensuremath{\Omega},}\\
\curl u=\ii\mu G\qquad\text{in \ensuremath{\Omega}.}
\end{array}\right.\label{eq:T explicit}
\end{equation}
In view of the Helmholtz decomposition \cite[Chapter I, Corollary 3.4]{GIRAULT-RAVIART-1986},
we can write $u=\nabla p+\curl\Phi$ for some $p\in\Hone$ and $\Phi\in H^{1}(\Omega;\C^{3})$
such that $\div\,\Phi=0$ in $\Omega$ and $\Phi\times\nu=0$ on $\partial\Omega$.
Since $\curl(\curl\Phi)=\nabla(\div\Phi)-\Delta\Phi=-\Delta\Phi$,
the second equation of \eqref{eq:T explicit} yields
\[
\left\{ \begin{array}{l}
-\Delta\Phi=\ii\mu G\qquad\text{in \ensuremath{\Omega},}\\
\div\,\Phi=0\qquad\text{in \ensuremath{\Omega},}\\
\Phi\times\nu=0\qquad\text{on \ensuremath{\partial\Omega}.}
\end{array}\right.
\]
Thus $\Phi$ is uniquely determined by $G$ and in view of \cite[Chapter I, Theorem 3.8]{GIRAULT-RAVIART-1986}
there holds
\begin{equation}
\left\Vert \curl\Phi\right\Vert _{H^{1}(\Omega;\C^{3})}\le c\bigl(\left\Vert \curl\curl\Phi\right\Vert _{\ld}+\left\Vert \div\curl\Phi\right\Vert _{L^{2}(\Omega;\C)}\bigr)\le c\left\Vert G\right\Vert _{\ld}.\label{eq:bound on Phi}
\end{equation}

We now want to find suitable boundary conditions satisfied by $p$.
We denote the surface gradient by $\nabla_{\partial\Omega}$, the
surface divergence by $\div_{\bo}$ and the surface scalar curl by
$\curl_{_{\partial\Omega}}$ \cite[Section 3.4]{MONK-2003}. By \eqref{eq:monk 3.52}
and \cite[(3.15)]{MONK-2003} we have
\[
0=\ii\mu G\cdot\nu=\curl\curl\Phi\cdot\nu=\div_{\partial\Omega}(\curl\Phi\times\nu)=\curl_{_{\partial\Omega}}\curl\Phi\qquad\text{on \ensuremath{\partial\Omega}}.
\]
(Note that $\curl\Phi\cdot\nu=\div_{\partial\Omega}(\Phi\times\nu)=0$
on $\partial\Omega$, so that $\curl\Phi$ is a tangential vector
field, and so we can apply to it the surface scalar curl.) As a result,
since $\bo$ is simply connected, there exists a unique $r\in H^{1}(\bo;\C)$
such that $\curl\Phi=-\nabla_{\bo}r$ on $\bo$ and $\int_{\bo}r\, ds=0$.
Poincaré inequality gives
\begin{equation}
\left\Vert r\right\Vert _{H^{1}(\bo;\C)}\le c\left\Vert \nabla_{\bo}r\right\Vert _{L_{t}^{2}(\bo,\C^{3})}=c\left\Vert \curl\Phi\right\Vert _{L_{t}^{2}(\bo,\C^{3})},\label{eq:poincare}
\end{equation}
where $L_{t}^{2}(\bo,\C^{3})$ denotes the space of tangential vector
fields in $L^{2}(\bo,\C^{3})$. As $u\times\nu=0$ on $\bo$ we have
$\nabla_{\bo}r\times\nu=-\curl\Phi\times\nu=\nabla p\times\nu=\nabla_{\bo}p\times\nu$
on $\bo$, whence $\nabla_{\bo}p=\nabla_{\bo}r$ on $\bo$. Since
$p$ is defined up to a constant, we can set $p=r$ on $\bo$. Thus,
in view of the first equation of \eqref{eq:T explicit}, we must look
for a solution to
\[
\left\{ \begin{array}{l}
-\div(\sigma\nabla p)=\div(\sigma\curl\Phi-\ii\epsilon F)\qquad\text{in \ensuremath{\Omega},}\\
p=r\qquad\text{on \ensuremath{\partial\Omega}.}
\end{array}\right.
\]
Therefore, $p$ is uniquely determined by $\Phi$, $F$ and $r$ and
the following estimate holds
\begin{equation}
\begin{split}\left\Vert p\right\Vert _{\Hone} & \le c\left(\left\Vert \curl\Phi\right\Vert _{\ld}+\left\Vert F\right\Vert _{\ld}+\left\Vert r\right\Vert _{H^{1}(\bo;\C)}\right)\\
 & \le c\left(\left\Vert \curl\Phi\right\Vert _{H^{1}(\Omega;\C^{3})}+\left\Vert F\right\Vert _{\ld}\right)\\
 & \le c\left(\left\Vert G\right\Vert _{\ld}+\left\Vert F\right\Vert _{\ld}\right),
\end{split}
\label{eq:bound on p}
\end{equation}
where the second inequality is a consequence of \eqref{eq:poincare}
and the third one follows from \eqref{eq:bound on Phi}. We have proven
that $u$ is uniquely determined by $F$ and $G$ and combining \eqref{eq:bound on Phi}
and \eqref{eq:bound on p} gives the estimate
\[
\bigl\Vert u\bigr\Vert_{\ld}\le c\left\Vert (F,G)\right\Vert _{X}.
\]

The well-posedness for $v$ follows from \cite[Chapter I, Theorem 3.5]{GIRAULT-RAVIART-1986}
applied to the first equation of \eqref{eq:T explicit}. Indeed, $\div(\sigma u-\ii\epsilon F)=0$
in $\Omega$ and so there exists a unique $v\in\Hcurl$ such that
\[
\left\{ \begin{array}{l}
\curl v=\sigma u-\ii\epsilon F\qquad\text{in \ensuremath{\Omega},}\\
\div(\mu v)=0\qquad\text{in \ensuremath{\Omega},}\\
\mu v\cdot\nu=0\qquad\text{on \ensuremath{\partial\Omega},}
\end{array}\right.
\]
and the norm estimate $\left\Vert v\right\Vert _{\ld}\le c(\bigl\Vert u\bigr\Vert_{\ld}+\bigl\Vert F\bigr\Vert_{\ld})$
holds.

Finally, we have proven that there exists a unique $(u,v)\in\mathcal{D}(T)$
such that \eqref{eq:T explicit} holds true. Moreover, thanks to the
estimates on the norms of $u$ and $v$, $T^{-1}$ is continuous.
\end{proof}
Combining Lemmata~\ref{lem:spectrum of T tilde} and \ref{lem:0 notin Sigma}
and using the fact that $\Sigma\subseteq\sigma(\tilde{T})$ we obtain
the following characterization of the spectrum of $T$.
\begin{prop}
\label{prop:spectrum of T}Assume that \eqref{eq:assumption_ell}
holds. The spectrum $\Sigma$ of $T$ is discrete. Moreover, $0\notin\Sigma$.
In particular, for all $w\in\C\setminus\Sigma$ and $(F,G)\in X$
the equation
\begin{equation}
(T-\k)(u,v)=(F,G)\label{eq:T - omega F,G}
\end{equation}
has a unique solution $(u,v)\in\mathcal{D}(T)$ such that
\[
\bigl\Vert(u,v)\bigr\Vert_{X}\le C\left\Vert (F,G)\right\Vert _{X},
\]
for some $C>0$ independent of $F$ and $G$.
\end{prop}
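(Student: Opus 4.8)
The plan is to obtain Proposition~\ref{prop:spectrum of T} by combining Lemmata~\ref{lem:spectrum of T tilde} and \ref{lem:0 notin Sigma}, the bridge being the observation that, away from $\k=0$, the constrained operator $T$ is nothing but the restriction of $\tilde T$ to the divergence-free subspaces, and that passing to this restriction creates no new spectrum.

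\textbf{Step 1: $\Sigma\subseteq\sigma(\tilde T)$.} Fix $\k\in\C\setminus\sigma(\tilde T)$; if $\k=0$ there is nothing to do, since $0\notin\Sigma$ by Lemma~\ref{lem:0 notin Sigma}, so assume $\k\neq 0$. Given $(F,G)\in X\subseteq\tilde X$, let $(u,v)\in\mathcal D(\tilde T)=\Hocurl\times\Hcurl$ be the unique solution of $(\tilde T-\k)(u,v)=(F,G)$, which exists and is bounded by the datum because $\k$ lies in the resolvent set of $\tilde T$. I would then check that in fact $v\in\Hmu$, so that $(u,v)\in\mathcal D(T)$. The second scalar equation reads $-\ii\mu^{-1}\curl u-\k v=G$, i.e. $\curl u=\ii\mu(\k v+G)$; taking the divergence gives $\ii(\k\,\div(\mu v)+\div(\mu G))=0$, whence $\div(\mu v)=0$ since $\div(\mu G)=0$ (as $(F,G)\in X$) and $\k\neq0$. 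Taking the normal component on $\partial\Omega$ and using \eqref{eq:monk 3.52}, which yields $\curl u\cdot\nu=\div_{\partial\Omega}(u\times\nu)=0$ because $u\in\Hocurl$, together with $\mu G\cdot\nu=0$, gives $\ii\k\,\mu v\cdot\nu=0$, hence $\mu v\cdot\nu=0$. Thus $(u,v)\in\mathcal D(T)$; since $T$ coincides with $\tilde T$ on $\mathcal D(T)$ and maps into $X$, we get $(T-\k)(u,v)=(F,G)$ in $X$, and uniqueness is inherited from that of $\tilde T$. Hence $T-\k$ is bijective from $\mathcal D(T)$ onto $X$, i.e. $\k\notin\Sigma$.

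\textbf{Step 2: discreteness, $0\notin\Sigma$, and the estimate.} Step~1 together with Lemma~\ref{lem:spectrum of T tilde} shows that $\Sigma$ is discrete, and $0\notin\Sigma$ is Lemma~\ref{lem:0 notin Sigma}. Conversely, if $\k\in\sigma(\tilde T)\setminus\{0\}$ then $\k$ is an eigenvalue of $\tilde T$ (the spectrum being discrete), and running the argument of Step~1 with $F=G=0$ places the eigenfunction in $\mathcal D(T)$, so $\k\in\Sigma$; therefore $\Sigma=\sigma(\tilde T)\setminus\{0\}$ and $\C\setminus\Sigma=(\C\setminus\sigma(\tilde T))\cup\{0\}$. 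Now take $\k\in\C\setminus\Sigma$ and $(F,G)\in X$. If $\k=0$, existence, uniqueness and the bound are exactly Lemma~\ref{lem:0 notin Sigma}. If $\k\neq0$, then $\k\in\C\setminus\sigma(\tilde T)$, so $(\tilde T-\k)^{-1}$ is bounded on $\tilde X$; by Step~1 the unique solution $(u,v)\in\mathcal D(T)$ of $(T-\k)(u,v)=(F,G)$ coincides with $(\tilde T-\k)^{-1}(F,G)$, whence $\|(u,v)\|_X=\|(\tilde T-\k)^{-1}(F,G)\|_{\tilde X}\le\|(\tilde T-\k)^{-1}\|_{\mathcal L(\tilde X)}\,\|(F,G)\|_X$, giving the claimed estimate with a constant independent of $F$ and $G$.

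The only delicate point is Step~1: one must verify that the two constraints defining $\Hmu$, namely $\div(\mu v)=0$ and $\mu v\cdot\nu=0$, are automatically satisfied by the $\tilde T$-solution once $\k\neq0$. This rests entirely on the trace identity \eqref{eq:monk 3.52} — which both makes sense of and forces the vanishing of the normal trace of $\curl u$ for $u\in\Hocurl$ — and on the fact that the data space $X$ already carries the matching constraints on $G$. Everything else is the routine bookkeeping of assembling the two lemmata through the relation $\Sigma=\sigma(\tilde T)\setminus\{0\}$.
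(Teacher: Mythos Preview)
Your proof is correct and follows exactly the route the paper indicates: the paper simply states that the result follows ``combining Lemmata~\ref{lem:spectrum of T tilde} and \ref{lem:0 notin Sigma} and using the fact that $\Sigma\subseteq\sigma(\tilde{T})$,'' whereas you supply the verification of this inclusion (your Step~1) in full. Your additional observation that in fact $\Sigma=\sigma(\tilde T)\setminus\{0\}$ is correct but not needed for the statement, since once $\k\in\C\setminus\Sigma$ the norm bound is immediate from the definition of the resolvent set; also note that the case $\k=0$ in Step~1 is vacuous, as $0\in\sigma(\tilde T)$.
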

We are now in a position to prove Proposition~\ref{prop:well-posedness}.
\begin{proof}[Proof of Proposition~\ref{prop:well-posedness}]
For $\k\in\C\setminus\Sigma$, set $F=\epsilon^{-1}q_{\k}\phi$ and
$G=\ii\mu^{-1}\curl\phi$ in \eqref{eq:T - omega F,G}. In view of
Proposition~\ref{prop:spectrum of T},  equation \eqref{eq:T-omega}
has a unique solution $(\EE,\H)\in\mathcal{D}(T)$ satisfying the
norm estimate
\[
\bigl\Vert(\EE,\H)\bigr\Vert_{X}\le C\left\Vert \phi\right\Vert _{\Hcurl},
\]
for some $C$ independent of $\phi$. After setting $\E=\EE+\phi$
as in \eqref{eq:lift}, Lemma~\ref{lem:equivalence} implies that
$(\E,\H)$ is the unique solution to \eqref{eq:combined}. Moreover,
the relevant norm estimate holds. This completes the proof.
\end{proof}

\subsection{Regularity properties}

The focus of this subsection is the proof of Proposition~\ref{prop:regularity}.
First, we study the regularity of the solutions to \eqref{eq:T - omega F,G}
by applying the regularity theory for Maxwell's equations discussed
in \cite{ALBERTI-CAPDEBOSCQ-2013}.
\begin{prop}
\label{pro:regularity lemma}Assume that \eqref{eq:assumption_ell}
and \eqref{eq:regularity} hold. Take $\k\in\C\setminus\Sigma$ and
$(F,G)\in X$ such that $F\in W^{\order,p}(\Omega;\C^{3})$ and $G\in W^{\order-1,p}(\Omega;\C^{3})$.
Let $(u,v)\in\mathcal{D}(T)$ be a solution to \eqref{eq:T - omega F,G}.
Then $u,v\in W^{\order,p}(\Omega;\C^{3})$ and
\begin{equation}
\left\Vert (u,v)\right\Vert _{W^{\order,p}(\Omega;\C^{3})^{2}}\le C\left(\left\Vert (u,v)\right\Vert _{\ld^{2}}+\left\Vert F\right\Vert _{W^{\order,p}(\Omega;\C^{3})}+\left\Vert G\right\Vert _{W^{\order-1,p}(\Omega;\C^{3})}\right),\label{eq:estimate lemma regularity}
\end{equation}
for some $C>0$ depending on $\Omega$, $\k$, $\mina$, $\maxa$,
$\kappa$, $p$ and $\left\Vert (\mu,\epsilon,\sigma)\right\Vert _{W^{\order,p}(\Omega;\R^{3\times3})^{3}}$
only. Moreover, if $\k=0$ we have
\[
\left\Vert (u,v)\right\Vert _{W^{\order,p}(\Omega;\C^{3})^{2}}\le C'\left(\left\Vert F\right\Vert _{W^{\order,p}(\Omega;\C^{3})}+\left\Vert G\right\Vert _{W^{\order-1,p}(\Omega;\C^{3})}\right),
\]
for some $C'>0$ depending on $\Omega$, $\mina$, $\maxa$, $\kappa$,
$p$ and $\left\Vert (\mu,\epsilon,\sigma)\right\Vert _{W^{\order,p}(\Omega;\R^{3\times3})^{3}}$
only.\end{prop}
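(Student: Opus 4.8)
The plan is to split the first-order system $(T-\k)(u,v)=(F,G)$ into two decoupled div-curl systems, one for $u$ and one for $v$, and then to apply the regularity theory for div-curl systems of \cite{ALBERTI-CAPDEBOSCQ-2013} in an induction on $\order$. Writing \eqref{eq:T - omega F,G} out explicitly gives
\[
\curl v=(\sigma-\ii\k\epsilon)u-\ii\epsilon F,\qquad \curl u=\ii\mu(G+\k v)\qquad\text{in }\Omega;
\]
since $(u,v)\in\mathcal{D}(T)$ we also have $u\times\nu=0$, $\div(\mu v)=0$ and $\mu v\cdot\nu=0$, and taking the divergence of the first identity yields $\div\bigl((\sigma-\ii\k\epsilon)u\bigr)=\ii\div(\epsilon F)$. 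Hence $v$ solves a div-curl system with coefficient $\mu$, curl datum $(\sigma-\ii\k\epsilon)u-\ii\epsilon F$, vanishing divergence datum and vanishing normal boundary datum, while $u$ solves one with the (nondegenerate) coefficient $\sigma-\ii\k\epsilon$, curl datum $\ii\mu(G+\k v)$, divergence datum $\ii\div(\epsilon F)$ and tangential boundary datum $0$. Throughout one uses that $W^{\order,p}(\Omega)$ is a Banach algebra that embeds in $L^\infty$ and acts by multiplication on the lower-order Sobolev spaces when $p>3$, so products such as $\epsilon F$, $\mu G$, $(\sigma-\ii\k\epsilon)u$ keep the expected regularity.

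First I would climb from $\ld$ into the $L^p$-scale. Since $(u,v)\in\ld^2$ and all the data are square-integrable, the classical $H^1$ div-curl estimate (of Girault--Raviart type, as already used in the proof of Lemma~\ref{lem:0 notin Sigma}) gives $(u,v)\in H^1(\Omega;\C^3)^2$; Sobolev embedding then yields $(u,v)\in L^6$, and, if $p>6$, one further turn of the same argument (now with Hölder continuous coefficients, which is legitimate because $W^{1,p}$ embeds in a Hölder space for $p>3$) places $(u,v)$ in $W^{1,p}\subseteq L^p$.

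The heart of the proof is then the induction. Assume $(u,v)\in W^{j,p}(\Omega;\C^3)^2$ with $0\le j<\order$ (where $W^{0,p}=L^p$). The curl datum $(\sigma-\ii\k\epsilon)u-\ii\epsilon F$ of the $v$-system lies in $W^{j,p}$, so the div-curl regularity theorem of \cite{ALBERTI-CAPDEBOSCQ-2013}, applied with the vanishing divergence and normal data, upgrades $v$ to $W^{j+1,p}$. Feeding this back, the curl datum $\ii\mu(G+\k v)$ of the $u$-system lies in $W^{j,p}$ (its $\mu G$ part is in $W^{\order-1,p}$, its $\mu v$ part in $W^{j+1,p}$) while its divergence datum $\ii\div(\epsilon F)$ is the divergence of an $\epsilon F\in W^{\order,p}\subseteq W^{j+1,p}$ field, so the same theorem upgrades $u$ to $W^{j+1,p}$. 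After $\order$ steps $(u,v)\in W^{\order,p}(\Omega;\C^3)^2$, and assembling the estimates accumulated along the way gives \eqref{eq:estimate lemma regularity}; the stated dependence of the constant follows because every elementary estimate used depends only on $\Omega$, $\k$, $\mina$, $\maxa$, $\order$, $p$ and $\left\Vert(\mu,\epsilon,\sigma)\right\Vert_{W^{\order,p}(\Omega;\R^{3\times3})^3}$. For $\k=0$ the term $\left\Vert(u,v)\right\Vert_{\ld^2}$ is removed by absorption: Lemma~\ref{lem:0 notin Sigma} gives $\left\Vert(u,v)\right\Vert_X\le C\left\Vert(F,G)\right\Vert_X\le C\bigl(\left\Vert F\right\Vert_{W^{\order,p}(\Omega;\C^3)}+\left\Vert G\right\Vert_{W^{\order-1,p}(\Omega;\C^3)}\bigr)$ with $C=C(\Omega,\mina,\maxa)$, which is exactly the sharper bound claimed.

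The step I expect to be most delicate is the bootstrap bookkeeping: tracking precisely how the regularity gained for one of $u,v$ feeds the curl datum of the other subsystem, bridging the $L^2$- and $L^p$-scales through the right number of Sobolev embeddings (which depends on how $p$ compares to $6$), and checking at each stage that the constants only involve the claimed quantities — and, for $\k=0$, are moreover independent of $\k$. The div-curl regularity estimates themselves are quoted directly from \cite{ALBERTI-CAPDEBOSCQ-2013}.
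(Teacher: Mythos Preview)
Your approach is correct and reaches the same conclusion, but it is organized differently from the paper's proof. The paper does not bootstrap by hand. Instead, it recasts \eqref{eq:T - omega F,G} as a Maxwell system of the form $\curl v=\ii\k'\epsilon'u+J_{e}$, $\curl u=-\ii\k'\mu'v+J_{m}$ with $\k'=1$, $\epsilon'=\k\epsilon+\ii\sigma$, $\mu'=\k\mu$, and then invokes \cite[Theorem~9]{ALBERTI-CAPDEBOSCQ-2013} as a single black box delivering the full $W^{\order,p}$ estimate at once. Because $\mu'=\k\mu$ degenerates at $\k=0$, the paper cannot apply the theorem there directly; it first derives the underlying weak identities for $\k\neq0$, then passes to the limit $\k_n\to0$ using the continuity of the resolvent, and finally observes that the proof of \cite[Theorem~9]{ALBERTI-CAPDEBOSCQ-2013} only uses those weak identities. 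Your decoupled div--curl formulation sidesteps this degeneracy entirely (the coefficient on the $v$-side is $\mu$, not $\k\mu$), which is a genuine simplification; the price is the explicit induction and the $L^2\to L^p$ bridge you have to carry out, which the paper outsources to the cited theorem. The treatment of the $\k=0$ constant via Lemma~\ref{lem:0 notin Sigma} is identical in both arguments.
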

\begin{proof}
By definition of $T$, \eqref{eq:T - omega F,G} can be rewritten
as
\[
\left\{ \begin{array}{l}
-q_{\k}u+\ii\curl v=\epsilon F\qquad\text{in \ensuremath{\Omega},}\\
-\ii\mu^{-1}\curl u-\k v=G\qquad\text{in \ensuremath{\Omega}.}
\end{array}\right.
\]
Substituting $v\to-v$, this problem can be recast as
\[
\left\{ \begin{array}{l}
\curl v=\ii\k'\epsilon'u+J_{e}\qquad\text{in \ensuremath{\Omega},}\\
\curl u=-\ii\k'\mu'v+J_{m}\qquad\text{in \ensuremath{\Omega},}
\end{array}\right.
\]
where $\k'=1$, $\epsilon'=\k\epsilon+\ii\sigma$, $J_{e}=\ii\epsilon F$,
$\mu'=\k\mu$ and $J_{m}=\ii\mu G$. This system's form was considered
in \cite{ALBERTI-CAPDEBOSCQ-2013}. In view of \cite[Proposition 5]{ALBERTI-CAPDEBOSCQ-2013}
we obtain that $(u_{l},v_{l})$ satisfies for any $l=1,2,3$ and all
$g\in H^{2}(\Omega;\C)$
\begin{multline*}
 \int_{\Omega}u_{l}\div(^{t}\!\epsilon'\nabla\bar{g})\, dx=\int_{\bo}(\partial_{l}\bar{g})\epsilon'u\cdot\nu\, ds\\+\int_{\Omega}\left(\partial_{l}\epsilon'u-\epsilon'(\e_{l}\times(\ii\mu G-\ii\k\mu v))-\ii\e_{l}\div(\ii\epsilon F)\right)\cdot\nabla\bar{g}\, dx,
\end{multline*}

and
\begin{multline*}
\int_{\Omega}v_{l}\div(^{t}\!(\k\mu)\nabla\bar{g})\, dx=-\int_{\bo}(\e_{l}\times(v\times\nu))\cdot(^{t}(\k\mu)\nabla\bar{g})\, ds\\
+\int_{\Omega}\left(\k(\partial_{l}\mu)v-\k\mu(\e_{l}\times(i\epsilon F+\ii(\k\epsilon+\ii\sigma)u))\right)\cdot\nabla\bar{g}\, dx.
\end{multline*}
Multiplying the first equation by $-\ii$ we obtain
\begin{multline}
\int_{\Omega}u_{l}\div(^{t}\!(\sigma-\ii\k\epsilon)\nabla\bar{g})\, dx=\int_{\bo}(\partial_{l}\bar{g})(\sigma-\ii\k\epsilon)u\cdot\nu\, ds\\
+\int_{\Omega}\left(\partial_{l}(\sigma-\ii\k\epsilon)u-(\sigma-\ii\k\epsilon)(\e_{l}\times(\ii\mu G-\ii\k\mu v))-\ii\e_{l}\div(\epsilon F)\right)\cdot\nabla\bar{g}\, dx.\label{eq:weak 1}
\end{multline}
Similarly, we can multiply the second equation by $\k^{-1}$ and obtain
\begin{multline}
\int_{\Omega}v_{l}\div(^{t}\!\mu\nabla\bar{g})\, dx=-\int_{\bo}(\e_{l}\times(v\times\nu))\cdot(^{t}\mu\nabla\bar{g})\, ds\\
+\int_{\Omega}\left((\partial_{l}\mu)v-\mu(\e_{l}\times(i\epsilon F+\ii(\k\epsilon+\ii\sigma)u))\right)\cdot\nabla\bar{g}\, dx.\label{eq:weak 2}
\end{multline}
Note that this last operation is allowed if $\k\neq0$. If $\k=0$,
we can argue as follows. Take a sequence $\k_{n}\in\C\setminus(\Sigma\cup\{0\})$
such that $\k_{n}\to0$. Since the resolvent operator $(T-\k)^{-1}$
is analytic in $\k$, then the map $\k\in\C\setminus\Sigma\mapsto(T-\k)^{-1}(F,G)\in X$
is continuous. Therefore, as \eqref{eq:weak 2} is satisfied for all
$\k_{n}$, it has to hold also for $\k=0$.

A careful analysis shows that the proof of \cite[Theorem 9]{ALBERTI-CAPDEBOSCQ-2013}
only relies on the system \eqref{eq:weak 1}-\eqref{eq:weak 2} and
not on the original form of Maxwell's equations considered in \cite{ALBERTI-CAPDEBOSCQ-2013} (see also \cite{2014albertidphil}).
Therefore, it is possible to apply this result to the previous system
\eqref{eq:weak 1}-\eqref{eq:weak 2} and obtain that $u,v\in W^{\order,p}(\Omega;\C^{3})$
and
\[
\left\Vert (u,v)\right\Vert _{W^{\order,p}(\Omega;\C^{3})^{2}}\le c\left(\left\Vert (u,v)\right\Vert _{\ld^{2}}+\left\Vert \epsilon F\right\Vert _{W^{\order,p}(\div,\Omega)}+\left\Vert \mu G\right\Vert _{W^{\kappa,p}(\div,\Omega)}\right),
\]
for some $c>0$ depending on $\Omega$, $\k$, $\mina$, $\maxa$,
$\kappa$, $p$ and $\left\Vert (\mu,\epsilon,\sigma)\right\Vert _{W^{\order,p}(\Omega;\R^{3\times3})^{3}}$
only, where
\[
W^{\order,p}(\div,\Omega)=\{u\in W^{\kappa-1,p}(\Omega;\C^{3}):\div u\in W^{\kappa-1,p}(\Omega;\C)\}.
\]
Estimate \eqref{eq:estimate lemma regularity} follows from $\left\Vert \epsilon F\right\Vert _{W^{\order,p}(\Omega;\C^{3})}\le c\left\Vert F\right\Vert _{W^{\order,p}(\Omega;\C^{3})}$,
which is an easy consequence of the Sobolev Embedding Theorem.

Finally, the last estimate follows from \eqref{eq:estimate lemma regularity}
and Lemma~\ref{lem:0 notin Sigma}.
\end{proof}
We are now ready to prove Proposition~\ref{prop:regularity}.
\begin{proof}[Proof of Proposition~\ref{prop:regularity}]
We have already seen (see the proof of Proposition~\ref{prop:well-posedness})
that setting $\E=\EE+\phi$, $(\EE,\H)$ is a solution to \eqref{eq:T - omega F,G}
with $F=\epsilon^{-1}q_{\k}\phi$ and $G=\ii\mu^{-1}\curl\phi$. Thus,
in view of Proposition~\ref{pro:regularity lemma}, $\EE,\H\in W^{\order,p}(\Omega;\C^{3})$
and
\begin{multline*}
\!\!\!\!\bigl\Vert(\EE,\H)\bigr\Vert_{W^{\order,p}(\Omega;\C^{3})^{2}}\le C\Bigl(\bigl\Vert(\EE,\H)\bigr\Vert_{\ld^{2}}\\+\left\Vert \epsilon^{-1}q_{\k}\phi\right\Vert _{W^{\order,p}(\Omega;\C^{3})}+\left\Vert \mu^{-1}\curl\phi\right\Vert _{W^{\order-1,p}(\Omega;\C^{3})}\Bigr)\!,\!\!\!\!
\end{multline*}
for some $C>0$ depending on $\Omega$, $\k$, $\mina$, $\maxa$,
$\kappa$, $p$ and $\left\Vert (\mu,\epsilon,\sigma)\right\Vert _{W^{\order,p}(\Omega;\R^{3\times3})^{3}}$
only. Combining this inequality with the estimate for $\bigl\Vert(\E,\H)\bigr\Vert_{\Hcurl^{2}}$
given in Proposition~\ref{prop:well-posedness} we obtain the first
part of the result, as $W^{\kappa,p}(\Omega;\C^{3})^{2}$ is continuously
embedded into $\Cl^{\kappa-1}(\overline{\Omega};\C^{6})$. Finally,
the estimate on $\left\Vert (E_{0},H_{0})\right\Vert _{W^{\order,p}(\Omega;\C^{3})^{2}}$
follows from the second part of Proposition~\ref{pro:regularity lemma}.
\end{proof}

\subsection{Analyticity properties}

We end this section by proving Proposition~\ref{prop:analyticity global}.
This will be a consequence of the results discussed so far in this
section and of the following lemma, which generalizes \cite[Proposition~3.5]{alberti2013multiple}.
\begin{lem}
\label{lem:general analyticity}Let $Y$ be a Banach space. Take an
operator $T\colon\mathcal{D}(T)\subseteq Y\to Y$ and denote its resolvent
set by $\rho(T)$. Take $Y_{1}\subseteq Y$ and $Y_{2}\subseteq\mathcal{D}(T)\cap Y_{1}$
with norms $\left\Vert \;\right\Vert _{Y_{1}}$ and $\left\Vert \;\right\Vert _{Y_{2}}$,
respectively. Assume that the inclusion $i\colon Y_{2}\to Y_{1}$
is continuous and that for all $\k\in\rho(T)$ the operator $(T-\k)^{-1}\colon Y_{1}\to Y_{2}$
is well-defined and bounded. Take $N\in Y_{2}$ and let $g\colon\rho(T)\to Y_{1}$
be such that $g(\k)-g(\k_{0})=(\k-\k_{0})N$ for all $\k,\k_{o}\in\rho(T)$.
Then the map
\[
\k\in\rho(T)\longmapsto(T-\k)^{-1}g(\k)\in Y_{2}
\]
is analytic.\end{lem}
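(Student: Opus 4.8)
The plan is to verify the two defining conditions for analyticity from Subsection~\ref{sub:Analytic-functions}: continuity of the map $h\colon\k\mapsto(T-\k)^{-1}g(\k)$ at an arbitrary $\k_{0}\in\rho(T)$, and the existence of a Gateaux differential at $\k_{0}$ in the single relevant direction (the scalar direction, since $\rho(T)\subseteq\C$ is open in $\C$). The key algebraic identity is the resolvent identity $(T-\k)^{-1}-(T-\k_{0})^{-1}=(\k-\k_{0})(T-\k)^{-1}(T-\k_{0})^{-1}$, which holds on $Y_{1}$ because $(T-\k)^{-1}\colon Y_{1}\to Y_{2}\subseteq\mathcal D(T)\cap Y_{1}$ is well-defined and bounded for every $\k\in\rho(T)$, so the composition makes sense.

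First I would write, for $\k$ near $\k_{0}$,
\[
h(\k)-h(\k_{0})=(T-\k)^{-1}g(\k)-(T-\k_{0})^{-1}g(\k_{0})=(T-\k)^{-1}\bigl(g(\k)-g(\k_{0})\bigr)+\bigl((T-\k)^{-1}-(T-\k_{0})^{-1}\bigr)g(\k_{0}).
\]
Using $g(\k)-g(\k_{0})=(\k-\k_{0})N$ and the resolvent identity, this becomes
\[
h(\k)-h(\k_{0})=(\k-\k_{0})(T-\k)^{-1}N+(\k-\k_{0})(T-\k)^{-1}(T-\k_{0})^{-1}g(\k_{0}).
\]
Since $N\in Y_{2}\subseteq\mathcal D(T)\cap Y_{1}$, the term $(T-\k)^{-1}N$ is meaningful; and $(T-\k_{0})^{-1}g(\k_{0})=h(\k_{0})\in Y_{2}\subseteq Y_{1}$, so $(T-\k)^{-1}(T-\k_{0})^{-1}g(\k_{0})$ is meaningful as well. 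Both terms carry a factor $(\k-\k_{0})$, so to get continuity of $h\colon\rho(T)\to Y_{2}$ it suffices to bound the operator norm of $(T-\k)^{-1}\colon Y_{1}\to Y_{2}$ locally uniformly in $\k$ near $\k_{0}$; this follows from the standard fact that $\k\mapsto(T-\k)^{-1}$ is (operator-)norm continuous on $\rho(T)$ together with the continuity of the inclusion $i\colon Y_{2}\to Y_{1}$, which lets us transfer the $Y$-resolvent bounds to $Y_{1}\to Y_{2}$ bounds by the closed graph theorem.

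For the Gateaux differential, I would divide the displayed identity by $(\k-\k_{0})$ to get
\[
\frac{h(\k)-h(\k_{0})}{\k-\k_{0}}=(T-\k)^{-1}N+(T-\k)^{-1}(T-\k_{0})^{-1}g(\k_{0}),
\]
and let $\k\to\k_{0}$. By the local norm continuity of $(T-\k)^{-1}\colon Y_{1}\to Y_{2}$ just invoked, the right-hand side converges in $Y_{2}$ to $(T-\k_{0})^{-1}N+(T-\k_{0})^{-1}(T-\k_{0})^{-1}g(\k_{0})=(T-\k_{0})^{-2}g(\k_{0})+(T-\k_{0})^{-1}N$, which is the derivative. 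Hence $h$ is continuous at $\k_{0}$ and admits a Gateaux differential there, so $h$ is analytic on $\rho(T)$ by the definition in Subsection~\ref{sub:Analytic-functions}. The main obstacle — and the only genuinely non-formal point — is the local uniform bound on $\bigl\Vert(T-\k)^{-1}\bigr\Vert_{\mathcal L(Y_{1},Y_{2})}$: one must check that norm continuity of the resolvent in the ambient space $Y$, combined with the hypothesis that $(T-\k)^{-1}$ maps $Y_{1}$ boundedly into $Y_{2}$ for each $\k$, yields local boundedness of the $\mathcal L(Y_{1},Y_{2})$-norm; this is exactly where the continuity of $i\colon Y_{2}\to Y_{1}$ and a closed-graph/uniform-boundedness argument enter. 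Everything else is the resolvent identity and the affine structure of $g$.
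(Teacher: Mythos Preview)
Your decomposition via the resolvent identity is correct, and the difference quotient you write down is exactly the right object. The genuine gap is precisely where you locate it: you need $(T-\k)^{-1}\to(T-\k_{0})^{-1}$ in $\mathcal L(Y_{1},Y_{2})$ (or at least strong convergence on the two vectors $N$ and $h(\k_{0})$) as $\k\to\k_{0}$. Your proposed justification does not close this gap. Norm continuity of the resolvent in $\mathcal L(Y,Y)$ gives convergence only in the ambient $Y$-norm, while you need convergence in the stronger $Y_{2}$-norm; the closed graph theorem yields boundedness of $(T-\k)^{-1}\colon Y_{1}\to Y_{2}$ at each fixed $\k$ (which is already assumed), not local uniform boundedness or continuity in $\k$; and uniform boundedness would require a pointwise bound you do not yet have. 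So as written the argument is incomplete at the one place you flag as non-formal.

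The paper sidesteps this entirely: it fixes $B=(T-\k_{0})^{-1}\colon Y_{1}\to Y_{2}$ and never uses $(T-\k)^{-1}$ for $\k\neq\k_{0}$. Writing $h=\k-\k_{0}$ and $C_{h}\colon Y_{2}\to Y_{1}$, $y\mapsto hy$, one gets $(I-BC_{h})\bigl(f(\k)-f(\k_{0})\bigr)=BC_{h}\bigl(N+f(\k_{0})\bigr)$ and inverts by Neumann series for $|h|$ small, obtaining a convergent power series in $h$ with values in $Y_{2}$. This uses only the single bound $\lVert B\rVert_{\mathcal L(Y_{1},Y_{2})}$ and the inclusion norm $\lVert i\rVert$. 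Note that the same Neumann series, read as $B_{\k}=B\sum_{n\ge0}(hiB)^{n}$, is exactly what would prove the $\mathcal L(Y_{1},Y_{2})$-continuity you need; so the cleanest fix for your argument is to replace the closed-graph appeal by this Neumann expansion, at which point your route and the paper's become the same computation.
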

\begin{proof}
Denote the map $\k\in\rho(T)\longmapsto(T-\k)^{-1}g(\k)\in Y_{2}$
by $f$. Take $\k_{0}\in\rho(T)$: we shall prove that $f$ is analytic
in $\k_{0}$. Denote the operator $(T-\k_{0})^{-1}\colon Y_{1}\to Y_{2}$
by $B$. Let $r>0$ be such that $B(\k_{0},r)\subseteq\rho(T)$, take
$\k\in B(\k_{0},r)$ and set $h=\k-\k_{0}$. Introduce the operator
$C_{h}\colon Y_{2}\to Y_{1}$ defined by $y\mapsto hy$, whose norm
satisfies $\left\Vert C_{h}\right\Vert \le\left\Vert i\right\Vert r$.
A straightforward calculation shows that
\[
(T-\k_{0})(f(\k)-f(\k_{0}))-h(f(\k)-f(\k_{0}))=h(N+f(\k_{0})),
\]
where the equality makes sense in $Y_{1}$. We can now apply the operator
$B$ to both sides of this equation and get
\[
(I-BC_{h})(f(\k)-f(\k_{0}))=BC_{h}(N+f(\k_{0})).
\]
If  $r<(\left\Vert i\right\Vert \left\Vert B\right\Vert )^{-1}$ we
have that $\left\Vert BC_{h}\right\Vert <1$ and so
\[
f(\k)-f(\k_{0})=\sum_{n=1}^{\infty}(BC_{h})^{n}(N+f(\k_{0})),\qquad\k\in B(\k_{0},r),
\]
whence the result.
\end{proof}
We are now in a position to prove Proposition~\ref{prop:analyticity global}.
\begin{proof}[Proof of Proposition~\ref{prop:analyticity global}]
Since $\E=\EE+\phi$, it is enough to show the analyticity of the
map $\k\mapsto(\EE,\H)$. We want to apply Lemma~\ref{lem:general analyticity}
with $Y=X$, $Y_{1}=\{(F,G)\in X:F\in W^{\order,p}(\Omega;\C^{3}),\; G\in W^{\order-1,p}(\Omega;\C^{3})\}$
equipped with the norm $\left\Vert \;\right\Vert _{W^{\order,p}(\Omega;\C^{3})}\times\left\Vert \;\right\Vert _{W^{\order-1,p}(\Omega;\C^{3})}$,
$Y_{2}=\mathcal{D}(T)\cap W^{\order,p}(\Omega;\C^{3})^{2}$ equipped
with the norm $\left\Vert \;\right\Vert _{W^{\order,p}(\Omega;\C^{3})^{2}}$,
$N=(0,\phi)$ and $g(\k)=(\epsilon^{-1}q_{\k}\phi,\ii\mu^{-1}\curl\phi)$.
Let us now check that the assumptions of the lemma are verified. The
continuity of the inclusion $Y_{2}\subseteq Y_{1}$ is trivial. The
continuity of $(T-\k)^{-1}\colon Y_{1}\to Y_{2}$ follows from Propositions
\ref{prop:spectrum of T} and \ref{pro:regularity lemma}. Finally,
a direct computation shows that $g(\k)-g(\k_{0})=(\k-\k_{0})N$. Therefore
the result follows by Lemma~\ref{lem:general analyticity}, as $W^{\kappa,p}(\Omega;\C^{3})^{2}$
is continuously embedded into $\Cl^{\kappa-1}(\overline{\Omega};\C^{6})$.
\end{proof}

\section*{Acknowledgments}

This work has been done during my D.Phil.~at the Oxford Centre for
Nonlinear PDE under the supervision of Yves Capdeboscq, whom I would
like to thank for many stimulating discussions on the topic and for
a thorough revision of the paper. I have benefited from the support of the EPSRC Science \& Innovation Award to the Oxford
Centre for Nonlinear PDE (EP/EO35027/1).

\bibliographystyle{abbrvurl}
\bibliography{2013-gsalberti}

\end{document}